\crefname{equation}{}{}
\crefname{lemma}{Lemma}{Lemmas}
\crefname{page}{p.}{pp.}
\numberwithin{equation}{section}
\theoremstyle{plain}
\newtheorem{theorem}{Theorem}[section]
\newtheorem{proposition}{Proposition}[section]
\newtheorem{lemma}{Lemma}[section]
\theoremstyle{definition}
\newtheorem{remark}{Remark}[section]
\def\now{%
\minute=\time%
\hour=\time \divide \hour by 60%
\hourMins=\hour \multiply\hourMins by 60%
\advance\minute by -\hourMins%
\zeroPadTwo{\the\hour}:\zeroPadTwo{\the\minute}%
}
\def\zeroPadTwo#1{\ifnum #1<10 0\fi#1}
\renewcommand{\cite}{\citet}
\def\^#1{\ifmmode {\mathaccent"705E #1} \else {\accent94 #1} \fi}
\def\~#1{\ifmmode {\mathaccent"707E #1} \else {\accent"7E #1} \fi}
\def\*#1{#1^\ast}
\edef\-#1{\noexpand\ifmmode {\noexpand\bar{#1}} \noexpand\else \-#1\noexpand\fi}
\def\>#1{\vec{#1}}
\def\.#1{\dot{#1}}
\def\wh#1{\widehat{#1}}
\def\wt#1{\widetilde{#1}}
\def\atop{\@@atop}
\def\*#1{\mathscr{#1}}
\renewcommand{\leq}{\leqslant}
\renewcommand{\geq}{\geqslant}
\newcommand{\eq}{\eqref}
\newcommand{\diag}{{\mathop{\mathrm{diag}}}}
\newcommand{\IE}{\mathbbm{E}}
\newcommand{\IP}{\mathbbm{P}}
\newcommand{\Cov}{\mathop{\mathrm{Cov}}}
\newcommand{\Vol}{\mathop{\mathrm{Vol}}}
\newcommand{\sgn}{\mathop{\mathrm{sgn}}}
\newcommand{\IR}{\mathbb{R}}
\def\be#1{\begin{equation*}#1\end{equation*}}
\def\ben#1{\begin{equation}#1\end{equation}}
\def\bes#1{\begin{equation*}\begin{split}#1\end{split}\end{equation*}}
\def\besn#1{\begin{equation}\begin{split}#1\end{split}\end{equation}}
\def\ba#1{\begin{align*}#1\end{align*}}
\def\ban#1{\begin{align}#1\end{align}}
\def\norm#1{\Vert#1\Vert}
\def\abs#1{\vert#1\vert}
\def\beqn#1\eeqn{\begin{align}#1\end{align}}
\def\beq#1\eeq{\begin{align*}#1\end{align*}}
\def\E{{\IE}}
\def\P{{\IP}}
\renewcommand\section{\@startsection {section}{1}{\z@}%
{-3.5ex \@plus -1ex \@minus -.2ex}%
{1.3ex \@plus.2ex}%
{\center\small\sc\mathversion{bold}}}
\def\subsection#1{\@startsection {subsection}{2}{0pt}%
{-3.5ex \@plus -1ex \@minus -.2ex}%
{1ex \@plus.2ex}%
{\bf\mathversion{bold}}{#1}}
\def\subsubsection#1{\@startsection{subsubsection}{3}{0pt}%
{\medskipamount}%
{-10pt}%
{\normalsize\itshape}{\kern-2.2ex. #1.}}
\def\blfootnote{\xdef\@thefnmark{}\@footnotetext}
\begin{document}

\title{Cram\'er-type Moderate Deviation for Quadratic Forms with a Fast Rate}
\author{Xiao Fang$^*$, Song-Hao Liu$^{\dagger}$, Qi-Man Shao$^{*,\dagger}$}
\date{\it {\small The Chinese University of Hong Kong$^*$, Southern University of Science and Technology$^\dagger$}} %\\[2ex]  \today %, \now}
\maketitle

\noindent{\bf Abstract:} 
Let $X_1,\dots, X_n$ be independent and identically distributed random vectors in $\mathbb{R}^d$. Suppose $\mathbbm{E} X_1=0$, $\Cov(X_1)=I_d$, where $I_d$ is the $d\times d$ identity matrix. Suppose further that there exist positive constants $t_0$ and $c_0$ such that $\mathbbm{E} e^{t_0|X_1|}\leq c_0<\infty$, where $|\cdot|$ denotes the Euclidean norm. Let $W=\frac{1}{\sqrt{n}}\sum_{i=1}^n X_i$ and let $Z$ be a $d$-dimensional standard normal random vector. Let $Q$ be a $d\times d$ symmetric positive definite matrix whose largest eigenvalue is 1.
We prove that for $0\leq x\leq \varepsilon n^{1/6}$,
\begin{equation*}
\left| \frac{\mathbbm{P}(|Q^{1/2}W|>x)}{\mathbbm{P}(|Q^{1/2}Z|>x)}-1    \right|\leq
C \left( \frac{1+x^5}{\det{(Q^{1/2})}n}+\frac{x^6}{n}\right)  \quad \text{for}\ d\geq 5
\end{equation*}
and
\begin{equation*}
\left| \frac{\mathbbm{P}(|Q^{1/2}W|>x)}{\mathbbm{P}(|Q^{1/2}Z|>x)}-1    \right|\leq
C \left( \frac{1+x^3}{\det{(Q^{1/2})}n^{\frac{d}{d+1}}}+\frac{x^6}{n}\right) \quad \text{for}\ 1\leq d\leq 4,
\end{equation*}
where $\varepsilon$ and $C$ are positive constants depending only on $d, t_0$, and $c_0$. This is a first extension of Cram\'er-type moderate deviation to the multivariate setting with a faster convergence rate than $1/\sqrt{n}$. The range of $x=o(n^{1/6})$ for the relative error to vanish and the dimension requirement $d\geq 5$ for the $1/n$ rate are both optimal. We prove our result using a new change of measure, a two-term Edgeworth expansion for the changed measure, and cancellation by symmetry for terms of the order $1/\sqrt{n}$.

%Let $\Sigma=\sum_{i=1}^n \Sigma_i:=\sum_{i=1}^n \Cov(X_i)$.
%Suppose $\Sigma$ is positive definite with the largest eigenvalue equal to $1$.

%Suppose their covariance matrices $\Sigma_i:=\Cov(X_i)$ are normalized so that $\frac{1}{n}\sum_{i=1}^n \Sigma_i=I_d$, 

%Motivated by chi-square tests in statistical inference, we are interested in approximating $P(|W|>x)$ by $P(|Z|>x)$ with a small relative error.

%and a distributional approximation result for the
%changed measure. 
%For the latter, we use a known absolute error bound of the order $O(1/n)$ in the
%two-term Edgeworth expansion for quadratic forms of G\"otze and Zaitsev [\emph{Ann. Probab.} {\bf 42} (2014) 354--397] and cancellation by symmetry for terms of the order $1/\sqrt{n}$.

\medskip

\noindent{\bf AMS 2010 subject classification: }  
60F05, 60F10, 62E17

\noindent{\bf Keywords and phrases:}  
Asymptotic expansion, central limit theorem, change of measure, quadratic forms, moderate deviations.  
%\tcb{re-ordered alphabetically}
%\begin{keyword}[class=AMS]
%\kwd[Primary ]{60F05,62E20} \kwd[; secondary ]{62L10}
%\end{keyword}

\section{Introduction and Main Result}

%\subsection{Berry-Esseen bounds and Cram\'er-type moderate deviations} 
Let $X_1,\dots, X_n$ be independent and identically distributed (i.i.d.) real-valued random variables with $\IE X_1=0, \IE X_1^2=1, \IE|X_1|^3<\infty$. Let $W=\frac{1}{\sqrt{n}}\sum_{i=1}^n X_i$.
The well-known Berry--Esseen bound (\cite{berry1941accuracy}, \cite{esseen1942liapunoff}) states that
\ben{\label{f1}
\sup_{x\in \mathbb{R}}|\P(W> x)-(1-\Phi(x))|\leq \frac{\IE|X_1|^3}{\sqrt{n}},
}
where $\Phi(\cdot)$ is the standard normal distribution function.
The rate $1/\sqrt{n}$ is optimal given that the distribution function of $W$ can have jumps of size $1/\sqrt{n}$, e.g., when $X_1=\pm 1$ with probability 1/2, while $\Phi(\cdot)$ is continuous.

%In the multi-dimensional setting, 
\cite{esseen1945fourier} first discovered an improved convergence rate in the multivariate normal approximation of sums of i.i.d.\ random vectors on centered Euclidean balls. Let $X_1,\dots, X_n$ be i.i.d.\ random vectors in $\mathbb{R}^d$, $d\geq 2$, with $\IE X_1=0,\  \Cov(X_1)=I_d,\  \IE|X_1|^4<\infty$, where $I_d$ denotes the $d\times d$ identity matrix and $|\cdot|$ denotes the Euclidean norm.
%or the ?? norm of a complex vector. 
Let $W=\frac{1}{\sqrt{n}}\sum_{i=1}^n X_i$ and $Z\sim N(0, I_d)$. Then, we have (see
\cite[Chapter VII, Theorem 1]{esseen1945fourier})
\ben{\label{f2}
\sup_{x\geq 0}|\P(|W|> x)-\P(|Z|> x)|\leq \frac{C_d}{n^{\frac{d}{d+1}}} (\IE|X_1|^4)^{3/2},
}
where $C_d$ is a constant depending only on $d$.
For $d\geq 5$,  \cite[Corollary 2.3]{GtzeFriedrich2014} later proved that
\ben{\label{f2.5}
\sup_{x\geq 0}|\P(|Q^{1/2}W|> x)-\P(|Q^{1/2}Z|> x)|\leq \frac{C_d}{\det(Q^{1/2}) n} \IE|X_1|^4,
}
where $Q$ is a $d\times d$ symmetric positive definite matrix whose largest eigenvalue is 1,
and
 $C_d$ is a constant depending only on $d$.
Thus, in particular, under a finite fourth moment condition, the rate of convergence for the chi-square $\chi^2_d$
approximation of the squared Euclidean norm of a sum of i.i.d.\ random vectors $|W|^2$ can be
improved to $1/n^{\frac{d}{d+1}}$ for $2\leq d\leq 4$ and to $1/n$ for $d\geq 5$. In \eq{f2.5}, both
the threshold of the dimension, namely, 5, and the $1/n$ rate are optimal (\cite{BG1997}).

%By assuming $\IE e^{t_0|X_1|}\leq c_0<\infty$ for some positive constants $t_0$ and $c_0$,
By assuming in addition that the moment generating function of $X_1$ exists in a neighborhood of 0,
\cite{cramer1938} and \cite{VonBahr1967} obtained relative error bounds for the approximation in \eq{f1} and \eq{f2}, respectively.
In particular, from \cite[Theorem 3]{VonBahr1967}, along with an expansion and symmetry argument (see \cref{s6}), we have, for $0\leq x\leq \varepsilon n^{1/6}$,
\ben{\label{f3}
\left| \frac{\P(|W|>x)}{\P(|Z|>x)}-1    \right|\leq C\left( \frac{1+x}{\sqrt{n}} +\frac{x^6}{n} \right),
}
where $\varepsilon$ and $C$ are unspecified positive constants, which do not depend on $n$ and $x$. 
We refer to results such as \eq{f3} as Cram\'er-type moderate deviations. 

The range of $x$ for the relative error in \eq{f3} to vanish, namely, $x=o(n^{1/6})$, is optimal.
More precisely, let $\{X_1, X_2,\dots\}$ be a sequence of i.i.d.\ random vectors in $\mathbb{R}^d$ with zero mean, identity covariance matrix, and $\IE e^{t_0|X_1|}\leq c_0<\infty$ for some positive constants $t_0$ and $c_0$. Let $W_n=\frac{1}{\sqrt{n}}\sum_{i=1}^n X_i, n\geq 1$.
If the mixed third cumulants of $X_1$ are not all zero, then, again from \cite{VonBahr1967}, we have (see \cref{s6}), for any fixed positive constant $c$,
\ben{\label{f23}
\frac{\P(|W_n|>cn^{1/6})}{\P(|Z|>cn^{1/6})}\nrightarrow 1, \ \text{as}\ n\to \infty.
}

%\subsection{The problem} 
By comparing \eq{f2} and \eq{f3}, we observe the following gap: Taking, say, $x=1$, in \eq{f3}, we obtain
\be{
|\P(|W|>1)-\P(|Z|>1)|\leq \frac{C}{\sqrt{n}},
}
which does not recover \eq{f2} for $d\geq 2$. 
Therefore, there is a gap in the rate of convergence between the Berry--Esseen bound \eq{f2} or \eq{f2.5} and the Cram\'er-type moderate deviation \eq{f3}.
This paper aims to establish a refined Cram\'er-type moderate deviation theorem with a rate of convergence matching that of the Berry--Esseen bound.
%Therefore, \eq{f3} may be improved in certain sense and the goal of this paper is to provide such an improvement.

%\subsection{Main result}

The following theorem is our main result.
\begin{theorem}\label{t1}
Let $X_1,\dots, X_n$ be i.i.d.\ random vectors in $\mathbb{R}^d$, where $d\geq 1$, and let Q be a symmetric
positive definite matrix whose largest eigenvalue is 1. Suppose $\IE  X_1=0,\
\Cov(X_1)=I_d$, and $\IE e^{t_0|X_1|}\leq c_0<\infty$ for some positive constants $t_0$ and $c_0$.
 Let $W=\frac{1}{\sqrt{n}}\sum_{i=1}^n X_i$ and $Z\sim N(0, I_d)$. 
Then, for $0\leq x\leq \varepsilon n^{1/6}$, we have
\ben{\label{f4}
    \left| \frac{\P(|Q^{1/2} W|>x)}{\P(|Q^{1/2} Z|>x)}-1    \right| \leq
    C \left( \frac{1+x^3}{\det(Q^{1/2})n^{\frac{d}{d+1}}} 1_{\{d\leq 4\}}
	+\frac{1+x^5}{\det(Q^{1/2}) n}1_{\{d\geq 5\}}+\frac{x^6}{n}\right),
}
where $\varepsilon$ and $C$ are positive constants depending only on $d, t_0$, and $c_0$.
\end{theorem}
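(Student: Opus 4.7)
The plan is to combine a quadratic-form change of measure with a two-term Edgeworth expansion, and to exploit spherical symmetry to cancel the $1/\sqrt n$ term that would otherwise appear. First, reduce to the regime $x\geq x_0$ for some fixed constant: when $x$ is bounded, $\mathbb{P}(|Q^{1/2}Z|>x)$ is bounded away from zero, and the absolute Berry--Esseen bounds \eqref{f2} and \eqref{f2.5} immediately translate into the stated relative-error bound. So I assume $x_0 \leq x\leq \varepsilon n^{1/6}$ from here on.

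For this range, the event $\{|Q^{1/2}W|>x\}$ is rare, and a change of measure is needed. One cannot simply tilt by $e^{\lambda|Q^{1/2}W|^2}$ since this is quadratic, not linear, in the summands. Following the strategy announced in the abstract, introduce an auxiliary Gaussian variable via a Hubbard--Stratonovich-type identity that writes the quadratic tilt as an average, against a Gaussian weight on $u\in\mathbb{R}^d$, of ordinary linear tilts $e^{\lambda Q^{1/2}u\cdot W/\sqrt n}$. Each linear tilt is a standard exponential change of measure on the summands $X_i$, under which $X_i$ picks up a mean shift of order $\lambda Q^{1/2}u/\sqrt n$. Choose the tilting parameter $\lambda=\lambda(x)\asymp x$ so that $|Q^{1/2}W|$ is typical of size $x$ under the aggregated tilted measure.

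Under the tilted measure, for each fixed $u$, apply a two-term Edgeworth expansion to the distribution of $Q^{1/2}W$. The zeroth-order term reproduces $\mathbb{P}(|Q^{1/2}Z|>x)$ after the Gaussian integration. The $1/\sqrt n$ correction is odd in $u$ (it is cubic, coming from third cumulants of $X_1$ contracted against $Q^{1/2}u$), so it integrates to zero against the symmetric Gaussian weight $e^{-|u|^2/2}$; this is the cancellation highlighted in the abstract. The surviving $1/n$ correction, once integrated over $u$ and evaluated at $\lambda\asymp x$, is of order $(1+x^5)/n$, with an extra $1/\det(Q^{1/2})$ coming from the Jacobian of $Q^{1/2}$ in the Gaussian integration. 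The additional $x^6/n$ term arises from the remainder in the Taylor expansion of the log-moment-generating function at the tilted point, which is precisely the contribution controlled by the moderate-deviation boundary $x=o(n^{1/6})$.

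The main obstacle is the case $1\leq d\leq 4$. In low dimension the Edgeworth expansion in density form does not deliver a $1/n$ rate uniformly: the sharp Berry--Esseen rate for Euclidean balls is $n^{-d/(d+1)}$ by Esseen \eqref{f2}, because the Fourier inversion at high frequencies cannot be smoothed away when $d$ is small. I would therefore replace, under the tilted measure, the density-level Edgeworth remainder by a non-uniform version of Esseen's $n^{-d/(d+1)}$ bound, producing the $(1+x^3)/(\det(Q^{1/2})n^{d/(d+1)})$ term in the statement. Two further technical hurdles deserve care: uniform control of the tilted cumulants, so that the Edgeworth constants stay bounded as $\lambda(x)$ ranges continuously with $x$; and tracking explicit spectral dependence in the non-uniform bounds so that the $\det(Q^{1/2})$ factor comes out correctly when $Q$ has small eigenvalues.
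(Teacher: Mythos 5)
Your proposal captures the skeleton of the paper's argument correctly: write the quadratic tilt $e^{h|W|^2}$ as a Gaussian average of linear tilts (the paper observes $e^{h|y|^2}=\IE e^{\langle\sqrt{2h}Z,y\rangle}$), push the linear tilt through to the summands, apply a two-term Edgeworth-type approximation conditionally on the auxiliary variable, and kill the $1/\sqrt n$ correction by oddness against the symmetric auxiliary law. You also correctly identify the two input Berry--Esseen results (G\"otze--Zaitsev's $1/n$ ball bound for $d\geq 5$, a non-uniform Esseen-type $n^{-d/(d+1)}$ bound for $d\leq 4$) and the source of the $x^6/n$ term (fourth-order remainder of $\log\hat G$ at the tilted point, which is also what pins the $x=o(n^{1/6})$ boundary).

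However, there is a genuine gap where the paper's new idea actually lives: you average the linear tilts against an \emph{ordinary} Gaussian weight on $u$. If you do that, the normalizing constant of your change of measure is $\IE_U\IE_W e^{\langle\sqrt{2h}U,W\rangle}=\IE e^{h|W|^2}$, and under the hypothesis $\IE e^{t_0|X_1|}<\infty$ this expectation can be $+\infty$ (and, even when finite, is too large for the tilted law to resemble $N(0,x^2 Q)$). Concretely, $\hat G(\sqrt{2h}Du/\sqrt n)$ is finite only for $|u|\lesssim t_0\sqrt n$, so the Gaussian average of $\hat G^n$ is not even well-defined. The paper's fix, and the crux of ``a new change of measure,'' is to replace the untruncated Gaussian by $Z_x$, a standard normal \emph{conditioned to lie in the ball of radius $3x$} (cf.\ \eqref{f10}). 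This guarantees finiteness of $\IE e^{\langle\sqrt{2h}DZ_x,W\rangle}$ for $x\leq\varepsilon n^{1/6}$, keeps all tilted cumulants uniformly controlled, and is also what makes the subsequent lower bound $\P(|Z+\sqrt{2h}e_1x|\leq z_0)^{-1}=O(1)$ in \eqref{eq:prop9} go through. Without truncation you would also lose the cancellation step as written, since $f_i(y)=\IE[\exp\{\langle\sqrt{2h}Z_x,y\rangle\}B_i]$ needs the expectation to converge absolutely before symmetry can be invoked.

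Two smaller inaccuracies worth noting. First, the $1/\det(Q^{1/2})$ factor does not come from a Jacobian in the Gaussian integration over $u$; it comes out of the Berry--Esseen inputs themselves (it is already present in \eqref{f2.5} and in \cref{l3,l11}), and a separate argument (\cref{lem:7}, a lower bound for weighted chi-square tails) is needed to compare the $\prod_i(1-2hq_i)^{-1/2}$ factor from the auxiliary-Gaussian normalization against $\P(|Q^{1/2}Z|>x)$. Second, for $d\geq 5$ the remainder you need is a non-uniform \emph{ball-measure} bound (this is what \cref{l3} supplies, giving the extra $(1+|b/\sigma|^3)$ factor and hence the $x^3$, which multiplies an $O(x^2)$ factor from the change-of-measure step to yield $x^5$), not a density-level Edgeworth remainder; the distinction matters for the uniformity in $a$.
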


\begin{remark}
\cref{t1} provides the first extension of Cram\'er-type moderate deviation to the multivariate setting with a faster convergence rate than $1/\sqrt{n}$. The convergence rates in \eq{f4} match those in \eq{f2} and \eq{f2.5}. 
In particular, the $1/n$ rate and the dimension requirement $d\geq 5$ for such a rate are optimal.
To prove \cref{t1}, we use a new change of measure, which may be of independent interest.
%(cf. Section \ref{s1.4}).
\end{remark}
%\begin{remark}
%	Without lose of generality, deonte by $1=q_{1}\geq q_{2}\geq \ldots \geq q_{d}>0$ the eigenvalues of
%matrix $Q$ and let $D=Q^{1/2}$.
%	\label{rem:1}
%\end{remark}

\begin{remark}
We assume $\Cov(X_1)=I_d$ and $\norm{Q}_{op}=1$, where $\norm{\cdot}_{op}$ denotes the operator norm, in \cref{t1} without loss of generality. 
Suppose $\overline W=\frac{1}{\sqrt{n}}\sum_{i=1}^n \overline X_i$, where $\{\overline X_i\}_{i=1}^n$ are i.i.d., $\E \overline X_1=0$, $\Cov(\overline X_1)=\overline \Sigma$ (positive definite), $\overline Q$ is an arbitrary symmetric positive definite matrix, and $\overline x\geq 0$.
Then, $\P(|\overline Q^{1/2} \overline W|>\overline x)$ reduces to the setting in \cref{t1} with 
\be{
Q=\frac{\overline \Sigma^{1/2}\cdot \overline Q \cdot  \overline \Sigma^{1/2}}{\norm{\overline \Sigma^{1/2}\cdot \overline Q \cdot  \overline \Sigma^{1/2}}_{op}},\quad x=\frac{\overline x}{\norm{\overline \Sigma^{1/2}\cdot \overline Q \cdot  \overline \Sigma^{1/2}}_{op}^{1/2}}.
}
However, the condition becomes $\IE e^{t_0|\overline \Sigma^{-1/2}\cdot \overline X_1|}\leq c_0<\infty$, as in the Lyapunov-type bounds in the literature of multivariate normal approximations; see \cite{bentkus2005} and \cite{GtzeFriedrich2014}. 
%See the next remark for a slightly weaker condition.
%1) We may equivalently formulate the problem as an approximation of probabilities outside Euclidean balls, but allowing the covariance matrix of $W$ to be arbitrary. This can be done by multiplying the $X$'s by $Q^{1/2}$. However, we prefer the current formulation because the condition $\mathbbm{E} e^{t_0|X_i|}\leq c_0<\infty$ on the unscaled $X_i$ seems more natural (cf. Bentkus (2005) and GZ14).  
\end{remark}

%\begin{remark}
%Instead of the condition $\IE e^{t_0|X_1|}\leq c_0<\infty$, we can assume
%\be{
%\IE e^{t_0|Q^{1/2}X_1|}\leq c_0<\infty,\quad \E|X_1|^4\leq c_1<\infty,
%}
%which is a weaker condition (recall the largest eigenvalue of $Q$ is equal to 1. By examining the proof, we can see that \textcolor{magenta}{the bound \eq{f4} holds under this weaker condition}, except that the constant $C$ will depend in addition on $c_1$. However, we prefer to keep the simple condition $\IE e^{t_0|X_1|}\leq c_0<\infty$ because it does not depend on $Q$.
%\end{remark}

\begin{remark}
The factor $\frac{1}{\det(Q^{1/2})}$ in the bound \cref{f4} also appeared in \cite{GtzeFriedrich2014} (cf. \cref{f2.5}). Such a factor prevents the degenerate case: if the problem is essentially lower dimensional, then the $1/n$ rate may not be valid. 
\end{remark}

%\begin{remark}
%An inspection of our proof and those of \cite{esseen1945fourier} and \cite{GtzeFriedrich2014} suggest that \cref{t1} holds for independent, not necessarily identically distributed random vectors if we assume the eigenvalues of $\Cov(X_i)$, $1\leq i\leq n$ have a uniform positive lower bound $c_1>0$. Then the implicit constant $C$ in \cref{f4} will depend in addition on this $c_1$. 
%If this uniform lower bound condition does not hold for the original sequence of random vectors $\{X_i\}_{i=1}^n$, a natural strategy is to divide the random vectors into $m$ groups so that the summation of the $X$'s within each group, denoted by $\{Y_i\}_{i=1}^m$, satisfies this condition and then apply our result to the sum of $Y$'s.
%\textcolor{magenta}{[We omit the details and only keep this remark? OR we rewrite Theorem 1.1 and its proof for this non-iiid case?  OR we say that we leave a more careful study of the non-iid case for future work?]}
%\end{remark}

%\subsection{Proof outline}\label{s1.4}

%\subsection{Organization of the paper}

This paper is organized as follows:
In \cref{s2}, we present the details of our new change of measure and postpone the proofs of lemmas to \cref{s5}.
The proof of \cref{t1} is given in \cref{s4}.
We provide a complete proof of \eq{f3} and \eq{f23} in \cref{s6}.
%In \cref{s4}, we finish the proof of our main result, \cref{t1}.
%In \cref{s6}, we prove \eq{f3} and \eq{f23}.

In \cref{s2,s4,s5}, we use $\varepsilon$ and $C$ to denote positive constants depending only on
$d, t_0$ and $c_0$. They may differ in different expressions. 
We use $O(\cdot)$ to denote a quantity (which can be random) that is bounded in absolute value by the quantity in the parentheses multiplied by a constant depending only on
$d, t_0$, and $c_0$.

\section{A New Change of Measure}\label{s2}

Recall our setting: Let $X_1,\dots, X_n$ be i.i.d.\ random vectors in $\mathbb{R}^d$, where $d\geq 1$.
Suppose $\IE  X_1=0, \Cov(X_1)=I_d$, and $\IE e^{t_0|X_1|}\leq c_0<\infty$ for some positive
constants $t_0$ and $c_0$.  Let $W=\frac{1}{\sqrt{n}}\sum_{i=1}^n X_i$ and $Z\sim N(0, I_d)$.
Without loss of generality, we assume $Q$ is a diagonal matrix with diagonal entries $1=q_{1}\geq q_{2}\geq \ldots \geq
q_{d}>0$. Let $D=Q^{1/2}$. 
%Without lose of generality, we may assume 
%$Q=\diag \{q_{1},q_{2},\ldots,q_{d}\}.$ 
In fact, for any symmetric positive definite matrix
$Q$, there exists an orthogonal matrix $P$ such that  $Q=P^{T}\Lambda P,
$ where $\Lambda=\diag \{q_{1},q_{2},\ldots,q_{d}\}$. We then have $$\P\biggl(\biggl\lvert  Q^{1/2} \sum^{n}_{i=1}
X_{i}/\sqrt{n}\biggr\rvert>x\biggr) =\P\biggl(\biggl\lvert\Lambda^{1/2} \sum^{n}_{i=1}
Y_{i}/\sqrt{n}\biggr\rvert>x\biggr), $$ 
where $Y_{i}=PX_{i}$, $\E Y_{i}=0$, $\Cov(Y_{i})=I_{d}$, and $|Y_i|=|X_i|$ and the problem reduces to the special
case.
%We also use $\IE_{X}$ to denote the expectation  (integral) w.r.t. $X$.
Moreover, we assume that $x> 1$ without loss of generality. Otherwise, \cref{t1} follows from \eq{f2.5} for the case $d\geq 5$ and from \cref{l11} (with $\Sigma_1=Q$ and $b=0$) for the case $1\leq d\leq 4$. We also assume that $x\leq \varepsilon n^{1/6}$ for a sufficiently small $\varepsilon>0$ as in the condition of \cref{t1}.

\paragraph{Proof Strategy.}%\red{We use the case $Q=I_d$ to explain our proof strategy.}
Roughly speaking, \cite{VonBahr1967} proved \eq{f3} by first using a \emph{local} exponential change of measure for different subsets $S_b$ of $\mathbb{R}^d$ nearby $b\in \mathbb{R}^d$, then using a normal approximation for the changed measure on each subset $S_b$, and finally combining the approximation results of all of the subsets. The $1/\sqrt{n}$ rate comes from the normal approximation step for each $S_b$.
%, which can not be improved.

In contrast, we use a new \emph{global} change of measure. This is motivated by \cite{Aleskevi1997}.
They considered, for the case $Q=I_d$ and each $x> 1$, an exponentially tilted $\widetilde W_A$ such that
\ben{\label{f5}
\P(\widetilde W_A=d y)=\frac{e^{h|y|^2}}{\IE e^{h|W|^2}}\P(W=d y),\quad y\in \mathbb{R}^d,
}
where 
\be{
h=h(x)=1/2-1/2x^2> 0.
}
We note that if $W$ is replaced by the standard normal $Z\sim N(0, I_d)$, then $\IE  e^{h|Z|^2}=x^d$ and the exponentially tilted $\widetilde Z_A$ follows $N(0, x^2I_d)$. Therefore, $\{|\widetilde Z_A|>x\}$ becomes a typical event. Because $W$ is close to normal, we may hope that $\widetilde W_A$ is close to $N(0, x^2 I_d)$ and use this approximation to obtain the desired relative error bound as in the classical change of measure argument.
However, under the condition of \cref{t1}, $\IE e^{h|W|^2}$ may be $\infty$. In fact, even if $\IE e^{h|W|^2}$ is finite, it is typically too large for $\widetilde W_A$ to be close to $N(0, x^2I_d)$. %This was indeed the bottleneck of A and ? (1997) and prevented them to obtain a sensible result.

Observing that $e^{h|y|^2}=\IE e^{\langle \sqrt{2h}Z, y\rangle}$ for the case $Q=I_d$, we modify \eq{f5} by considering, for the case of general diagonal matrix $Q$ and $D=Q^{1/2}$, %(see \cref{f7} for the case of general $Q$)
\ben{\label{f7}
\P(\widetilde W= d y)=\frac{\IE e^{\langle \sqrt{2h}D Z_x, y\rangle }}{\IE e^{\langle \sqrt{2h}D Z_x, W\rangle }}\P(W=d y), \quad y\in \mathbb{R}^d,
}
where $\langle A, B\rangle$ denotes the inner product, $Z_x$ is an independent standard normal random vector restricted to the centered ball with radius $z_0$, that is,
\ben{\label{f10}
\P(Z_x=dz)=\kappa 1_{(|z|\leq z_0)} \frac{1}{(\sqrt{2\pi})^d} e^{-|z|^2/2}dz,\quad z\in \mathbb{R}^d,
}
$\kappa$ is the normalizing constant and $z_0=z_0(x)=3x$ (which will be used in \eq{eq:prop9}).
%Note that without thresholding, 
%\be{
%Ee^{\sqrt{2h}Z\cdot W}=Ee^{h|W|^2},
%}
%and $\widetilde W$ reduces to $\widetilde W_A$ in \eq{f5}. 
Because of the assumption of finite moment generating function, $\IE e^{\langle \sqrt{2h}D Z_x, W\rangle }$
is finite for $1<x\leq \varepsilon n^{1/6}$ for a sufficiently small $\varepsilon$. 

%and $\{|\widetilde W|>x\}$ becomes a typical event.

The rest of the proof is provided in three steps.
First, we write $\P(| D\widetilde W|>x)$ as a weighted sum of probabilities involving quadratic forms (cf. \eq{f13}).
Second, we approximate each probability using a two-term Edgeworth expansion (cf. \eq{f27}).
We quantify the error in such an approximation using a result of \cite{GtzeFriedrich2014} for the
case $d\geq 5$ (cf. \cref{l3}) and a modification of a result of \cite{esseen1945fourier} for the case $1\leq d\leq 4$ (cf. \cref{l11}).
Finally, we show that the terms of the order $1/\sqrt{n}$ in the Edgeworth expansion disappear using a symmetry argument (cf. \eq{eq:lem5_16}).

\bigskip

\bigskip

Now we begin with the formal proof.
Assume without loss of generality that $\{X_i\}_{i=1}^n$, $Z$ and $Z_x$ defined above are jointly independent.
From \eq{f7}, the characteristic function of $\wt{W}$ can be expressed as
\ben{\label{f11}
\IE e^{\langle i t, \widetilde W\rangle }=\frac{\IE  e^{\langle \sqrt{2h}D Z_x+it, W\rangle }}{\IE
e^{\langle \sqrt{2h}D Z_x, W\rangle }}.
}
When the expectation is with respect to both $Z_x$ and $W$, we compute it by first conditioning on $Z_x$. 
Let $\hat{G}(b)=\IE  e^{\langle b, X_{1}\rangle } $ for a complex vector $b\in \mathbb{C}^{d}$.
We write the characteristic function of $D \wt{W}$ (cf. \eq{f11}) as
\be{
\IE  e^{\langle it,D \wt{W}\rangle }=\frac{1}{\IE  \hat G^n \bigl(\sqrt{2h}D Z_x/\sqrt{n}\bigr) }\IE  \left[
\hat G^n \bigl(\sqrt{2h} D Z_x/\sqrt{n}\bigr) \frac{\hat G^n \bigl(\bigl(\sqrt{2h}D Z_x+iDt\bigr)/\sqrt{n}\bigr)}{\hat G^n
\bigl(\sqrt{2h}D Z_x/\sqrt{n}\bigr)} \right].
}
This implies that $D \wt W$ is a mixture (depending on the value of $Z_x$) of sums of i.i.d.\ random
vectors $\frac{1}{\sqrt{n}}\sum_{i=1}^n \wt X_i$, where each $\wt X_i$ has the characteristic
function $\frac{\hat G ((\sqrt{2h}D Z_x/\sqrt{n})+iD t)}{\hat G (\sqrt{2h}D Z_x/\sqrt{n})}$, that is,
\begin{equation}
    \begin{aligned}
        \P^{Z_{x}}(\widetilde{X}_{1}=d y)= \frac{e^{\langle \sqrt{2h}  Z_x, y\rangle /\sqrt{n}}}{\hat{G}(\sqrt{2h}
        D Z_x/\sqrt{n})}\P(D X_{1}=d y).
    \end{aligned}
    \label{eq:2_2}
\end{equation}
Hereafter, we use $\P^{Z_{x}}$ and $\IE ^{Z_{x}}$ to denote the conditional probability and expectation, respectively, given $Z_x$.
Therefore,
\ben{\label{f13}
    \P(|D \wt W|\leq a)= \frac{1}{\IE \hat G^n (\sqrt{2h}D Z_x/\sqrt{n}) } \IE \left[\hat G^n
    (\sqrt{2h}D Z_x/\sqrt{n}) \P^{Z_x} \left( \left|\frac{ \sum_{i=1}^n  \wt X_i}{\sqrt{n}} \right|
\leq a \right)  \right].
}
 We will use a two-term Edgeworth expansion to approximate $\P^{Z_x} \left( \left|\frac{ \sum_{i=1}^n \wt X_i}{\sqrt{n}} \right| \leq a \right)$. 
 To express the two-term Edgeworth expansion, let $\wt \mu_1=\wt \mu_i=\IE ^{Z_x} \wt X_i$ and rewrite
\be{
\P^{Z_x} \left( \left|\frac{ \sum_{i=1}^n \wt X_i}{\sqrt{n}} \right| \leq a \right)=\P^{Z_x}
\left(\frac{ \sum_{i=1}^n (\wt X_i-\wt \mu_i)}{\sqrt{n}}\in  B(-\sqrt{n}\wt \mu_1, a) \right),
}
where $B(b, a)$ denotes the Euclidean ball with center $b$ and radius $a$.
Denote by $\wt \Sigma$ the conditional covariance matrix of $ \wt X_1$ given $Z_x$. 
It will be shown in \cref{l4} that $\wt \Sigma$ is positive definite when $1<x\leq \varepsilon n^{1/6}$ for a sufficiently small $\varepsilon>0$.
Denote by $\phi$ the $d$-dimensional standard normal density function, 
\begin{equation}
    \begin{aligned}
\wt p(y)=\phi(\wt \Sigma^{-1/2}y)/\sqrt{\det \wt \Sigma} \quad\quad \quad \mbox{(density of $N(0,\wt \Sigma)$)}
    \end{aligned}
    \label{eq:lem4_7}
\end{equation}
and 
\begin{equation}
    \begin{aligned}
\wt p'''(y) u^3=\wt p(y)\left( 3\langle \wt \Sigma^{-1}u, u\rangle\langle \wt \Sigma^{-1}y, u\rangle-\langle \wt \Sigma^{-1}y, u\rangle^3 \right),
    \end{aligned}
    \label{eq:2_3}
\end{equation}
which is the third Frechet derivative of $p$ in direction $u$.
Let 
\ben{\label{f28}
\wt \omega(y)=\wt p(y)+\frac{1}{6\sqrt{n}} \E \wt p'''(y) (\wt X_1-\wt \mu_1)^3.
}
It can be seen from \cref{lem:4} below that, when $1<x\leq \varepsilon n^{1/6}$ for a sufficiently small $\varepsilon>0$, $\wt \omega(y)$ is absolutely integrable over $\mathbb{R}^d$.
The two-term Edgeworth expansion for $\P^{Z_x} \left( \left|\frac{\sum_{i=1}^n \wt X_i}{\sqrt{n}} \right| \leq a \right)$ is given by (cf. \cite{BR86})
\ben{\label{f27}
\int_{B(-\sqrt{n}\wt \mu_1,a)} \wt \omega(y)dy=\int_{B(0,a)} \wt \omega(y-\sqrt{n}\wt \mu_1) dy.
}
 
According to \cref{f13}--\eq{f27}, we define $\wt\Phi(\cdot)$ to be a signed measure as
\begin{equation}
    \begin{aligned}
        d \wt \Phi(y)= \frac{\IE \left[\hat G^n ( \sqrt{2h}DZ_x/\sqrt{n}) \Bigl(\wt p(y-\sqrt{n}
                   \wt \mu_{1} ) +\frac{1}{6\sqrt{n}}\IE^{Z_{x}}\Bigl\{\wt p'''(y-\sqrt{n}
                      \wt  \mu_{1})
        (\widetilde{X}_{1}-\wt \mu_{1})^3\Bigr\}  \Bigr) \right]}{\IE \hat G^n ( \sqrt{2h}DZ_x/\sqrt{n})
    }dy
    \end{aligned}
    \label{eq:2_1}
\end{equation}
and we use $\wt \Phi(B(0,a))$ to approximate $\P(|D \wt W|\leq a)$.

The the main result in this section is as follows:
\begin{proposition}\label{p1}
Under the conditions of \cref{t1},
let $\wt W$ be as in \eq{f7} and $\wt \Phi$ be as in \eq{eq:2_1}.
    There exists a positive constant $\varepsilon$ such that for $1< x\leq \varepsilon
    n^{1/6}$,
\ben{\label{f6}
    \frac{\P(|D W|>x)}{\P(|DZ|>x)}-1=O\left(\frac{x^6}{n}\right) +O(x^{2})  \sup_{a\geq
    0}\abs{\P(|D\wt{W}|\leq  a)-\wt \Phi (B(0,a))}.
}
\end{proposition}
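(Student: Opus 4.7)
The plan is to express $\P(|DW|>x)$ through the change of measure (\ref{f7}), replace the law of $D\wt W$ by $\wt\Phi$ with a controlled remainder, and then show by a symmetry argument that the resulting surrogate agrees with $\P(|DZ|>x)$ up to an additive error $O(x^6/n)$.

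The starting point is the identity $\IE g(W) = M\,\IE[g(\wt W)/\psi(\wt W)]$ read off from (\ref{f7}), where $M := \IE e^{\langle\sqrt{2h}DZ_x,W\rangle}$ and $\psi(y) := \IE e^{\langle\sqrt{2h}DZ_x,y\rangle}$. Rotational symmetry of $Z_x$ forces $\psi(y) = \psi_0(|Dy|)$ for some function $\psi_0$, and the choice $g = \mathbf 1_{\{|D\,\cdot\,|>x\}}$ rewrites
$$\P(|DW|>x) \;=\; M\int_x^\infty \psi_0(a)^{-1}\,dF(a), \qquad F(a):=\P(|D\wt W|\le a).$$
Set $R(a) := F(a) - \wt\Phi(B(0,a))$ and integrate by parts to obtain
$$\int_x^\infty \psi_0(a)^{-1}\,d(F-\wt\Phi)(a) \;=\; \frac{R(x)}{\psi_0(x)} + \int_x^\infty R(a)\,\frac{\psi_0'(a)}{\psi_0(a)^2}\,da.$$
Completing the square in the definition of $\psi$ gives $\psi_0(r) = \kappa e^{hr^2}\P(|Z'+\sqrt{2h}re_1|\le z_0)$ with $Z'\sim N(0,I_d)$, from which $\psi_0(a) \asymp \kappa e^{ha^2}$ and $|\psi_0'(a)|/\psi_0(a)^2 \asymp 2ha\,e^{-ha^2}/\kappa$, so the above remainder is $O(\kappa^{-1}e^{-hx^2})\sup_a|R(a)|$. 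Combined with the Laplace-transform estimate $M \lesssim \kappa x$ and the tail bound $\P(|DZ|>x) \gtrsim x^{-1}e^{-x^2/2}$, this reproduces the factor $O(x^2)\sup_a|R(a)|$ in (\ref{f6}).

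It then remains to show that the principal term $M\int_x^\infty \psi_0(a)^{-1}\,d\wt\Phi(a)$ matches $\P(|DZ|>x)$ up to $O(x^6/n)$. Applying the same tilt with $W$ replaced by the Gaussian $Z$ yields $\P(|DZ|>x) = \IE[e^{h|DZ_x|^2}\,J_Z(Z_x)]$, where $J_Z(Z_x) := \int_{|Dy|>x} \psi_0(|Dy|)^{-1}\,p_Q(y-\sqrt{2h}QZ_x)\,dy$ and no Edgeworth correction is needed since the tilted Gaussian is exactly $N(\sqrt{2h}QZ_x, Q)$ and all third cumulants of $Z$ vanish. In parallel the principal term equals $\IE[\hat G^n(\sqrt{2h}DZ_x/\sqrt n)\,J(Z_x)]$, where $J(Z_x)$ is the integral of $\psi_0^{-1}$ against the full two-term Edgeworth density. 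Writing the difference as
$$\IE\bigl[(\hat G^n-e^{h|DZ_x|^2})\,J_Z\bigr] \;+\; \IE\bigl[\hat G^n\,(J-J_Z)\bigr]$$
and plugging in the cumulant expansion
$$\hat G^n(\sqrt{2h}DZ_x/\sqrt n) \;=\; e^{h|DZ_x|^2}\bigl(1 + \kappa_3[(\sqrt{2h}DZ_x)^3]/(6\sqrt n) + O(x^6/n)\bigr),$$
together with the parallel $1/\sqrt n$ expansions of $\wt\Sigma - Q$, $\sqrt n(\wt\mu_1 - \wt\mu_1^Z)$, and the explicit $\wt p'''$ Edgeworth term, one verifies that after the $y$-integration each $1/\sqrt n$ contribution reduces to an odd function of $Z_x$ (using the evenness of $p_Q$ and its even-order derivatives on the $y$-symmetric region $\{|Dy|>x\}$). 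Symmetry of the law of $Z_x$ annihilates these terms, and the residuals, controlled via $|Z_x| \le z_0 = 3x$, are $O(x^6/n)$.

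The main obstacle is precisely this final symmetry cancellation: three separate sources of $1/\sqrt n$ corrections (the cumulant expansion of $\hat G^n$, the discrepancy $\wt\Sigma - Q$, and the explicit third-moment Edgeworth term involving $\IE^{Z_x}(\wt X_1 - \wt\mu_1)^3$) must each be expanded to the right order and aligned, and the key identification is that each produces an odd function of $Z_x$ after the $y$-integration so that all first-order-in-$1/\sqrt n$ contributions vanish under the symmetric law of $Z_x$. The truncation $|Z_x| \le 3x$ is essential to keep the cumulant series in its region of convergence throughout the range $x \le \varepsilon n^{1/6}$.
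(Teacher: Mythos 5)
Your proposal follows the paper's architecture: the same change of measure, the same split into a ``principal term'' (integral against $\wt\Phi$) and a ``remainder'' (integral against $\P(D\wt W\leq\cdot)-\wt\Phi$) handled by integration by parts, and the same key observation that all $1/\sqrt{n}$ contributions become odd functions of $Z_x$ after the $y$-integration and therefore vanish under the symmetric law of $Z_x$. That cancellation is indeed the heart of the matter, and you've identified it correctly.

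There is, however, a genuine gap in your treatment of the remainder term, namely in producing the factor $O(x^2)$. You assert the two estimates ``$M\lesssim\kappa x$'' and ``$\P(|DZ|>x)\gtrsim x^{-1}e^{-x^2/2}$'' and claim they compose to give $O(x^2)$. Both estimates are incorrect as stated for general $d$ and $Q$. For instance, when $Q=I_d$ one has $M/\kappa\asymp\det(I-2hQ)^{-1/2}=x^d$ (not $\lesssim x$), while $\P(|Z|>x)\asymp x^{d-2}e^{-x^2/2}$; the ratio happens to come out as $x^2$ in this case, but this is a cancellation between two wrong bounds rather than a consequence of them. For a general diagonal $Q$ with eigenvalues $q_1=1\geq q_2\geq\cdots\geq q_d>0$, one gets $M/\kappa\asymp\prod_i(1-2hq_i)^{-1/2}$, and matching this against $\P(|DZ|>x)=\P(\sum_i q_i\chi^2_1\geq x^2)$ so as to extract exactly an $x^2$ is nontrivial: it is precisely what the paper's Lemma~\ref{lem:7} (the lower bound on weighted chi-square tails, split at the index $p$ where $(1-\lambda_i)x^2/\lambda_i>1$) is designed to do, together with the bookkeeping in \eqref{eq:lem5_19}--\eqref{eq:lem5_23}. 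Without a replacement for that lemma, the claim that the remainder contributes $O(x^2)\sup_a|R(a)|$ relative to $\P(|DZ|>x)$ is unproved. A secondary, smaller issue: your claimed asymptotics $\psi_0(a)\asymp\kappa e^{ha^2}$ and $|\psi_0'(a)|/\psi_0(a)^2\asymp 2ha\,e^{-ha^2}/\kappa$ are not needed and not obviously true uniformly for $a>x$ (the truncation probability $\P(|Z'+\sqrt{2h}ae_1|\leq z_0)$ decays for large $a$); the paper circumvents this by using only that $m(a)=\psi_0(a)$ is increasing, so the integration-by-parts boundary term and the total-variation term are both controlled by $m(x)^{-1}$.

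Your presentation of the principal-term cancellation via a parallel tilt of $Z$ is a perfectly good variant of the paper's direct computation of $I_{11}+I_{12}+I_{13}$; the paper instead expands everything into the factors $(1+B_0)(1+B_1)(1+B_2)(1+B_3)$ and kills the four odd pieces one by one in \eqref{eq:lem5_13}--\eqref{eq:lem5_16}. You also need, as the paper does via Lemma~\ref{lem:8}, a bound converting the additive $O(x^6/n)$ error inside the Gaussian-weighted integral into a relative error against $\P(|DZ|>x)$; this step is implicit in your writeup and should be made explicit.
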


%\begin{remark}
%Intuitively speaking, for dimension $d=1$, the classical change of measure uses
%\be{
%\P(Z>x)\leq \frac{\IE  e^{tZ}}{e^{tx}}=\frac{e^{t^2/2}}{e^{tx}}=e^{-x^2/2}\qquad [\text{with}\ t=x],
%}
%but our change of measure uses
%\be{
%\P(Z>x)\leq \frac{\IE  e^{hZ^2}}{e^{hx^2}}=\frac{(1-2h)^{-1/2}}{e^{hx^2}}=\sqrt{e}xe^{-x^2/2}\qquad [\text{with}\ h=\frac{1}{2}-\frac{1}{2x^2}].
%}
%Therefore, we have an additional factor of $x$. 
%The advantage of our change of measure is it works for dimension $d\geq 2$.
%\end{remark}

To prove \cref{p1}, we need a few lemmas. These lemmas are proved in \cref{s5}.
The first lemma estimates $\wt \mu_1$ and $\wt \Sigma$ defined above, which depend on the value of $Z_x$. 
\begin{lemma}\label{l4}
     There exists a positive constant $\varepsilon$ such that for $1< x\leq \varepsilon
    n^{1/6}$, we have, given any $Z_{x}$,
    \begin{align}
           &\wt \mu_1=\frac{\sqrt{2h} Q Z_x}{\sqrt{n}}+\frac{1}{2n}D \IE^{Z_{x}} \{\langle \sqrt{2h}
			   DZ_x ,
X_{1}\rangle^{2}X_{1}\} + D V 	,\label{f25}\\& \nonumber
\quad\quad\quad\quad\quad\quad\quad\quad\quad\quad\quad\quad\quad \mbox{each component of the
$d$-vector $V$ is $O\left(\frac{x^3}{n^{3/2}}\right)$},\\
											  &\wt \Sigma=D\biggl(I_d+\frac{1}{\sqrt{n}}
											  \IE^{Z_{x}}\{\langle \sqrt{2h} DZ_x, X_{1}\rangle 
											  X_{1}X_{1}^{T} \}+R\biggr)D, \label{f33}
\\&\nonumber\quad\quad\quad\quad\quad\quad\quad\quad\quad\quad\quad\quad\quad\mbox{each entry of the
$d\times d$ matrix $R$ is $O\left(\frac{x^2}{n}\right)$},\\
		   &\IE ^{Z_x} \wt X_{1j} \wt X_{1k} \wt X_{1l}=\IE  (D X_{1})_{j} (D
		   X_{1})_{k}(D X_{1})_{l}+ O\left(\frac{x}{\sqrt{n}}\right)q_j^{1/2} q_k^{1/2} q_l^{1/2},\quad
		   j,k,l=1,\dots, d, \label{f26}\\
         & \E^{Z_x} |\wt X_1|^4=\E|D X_1|^4+O\left(\frac{x}{\sqrt{n}}\right),   \label{f30}\\
		 &\det \wt \Sigma=\det (Q)\biggl(1+\frac{1}{\sqrt{n}}\sum_{j=1}^d \wt \lambda_j
		 +O\left(\frac{x^2}{n}\right)\biggr)\label{eq:lem2_9},\\        
		 & \wt \Sigma^{-1}=D^{-1}\left( I_d-\frac{1}{\sqrt{n}} \IE^{Z_{x}}\{\langle \sqrt{2h} D Z_x , X_{1}\rangle 
		 X_{1}X_{1}^{T} \}+R'\right)D^{-1}, \label{f24}\\
           &\quad\quad\quad\quad\quad\quad\quad\quad\quad\quad\quad\quad\nonumber\mbox{$R'$ satisfies $\sup\limits_{X,Y\in \IR^{d}, \lvert
       X\rvert=\lvert Y\rvert=1}| X^{T}R'Y|=O\left(\frac{x^{2}}{n}\right)$},
    \end{align}
where 
\ben{\label{f29}
\wt \lambda_j=\wt \lambda_j(Z_x), 1\leq j\leq d,\  \text{denote the eigenvalues of
	$\IE^{Z_{x}}\{\langle \sqrt{2h} D Z_x, X_{1}\rangle 
X_{1}X_{1}^{T} \}$},
} $(DX_{1})_{j}$ is the $j$th component of vector $DX_{1}$,
and
$X_{1}^{T}$ denotes the transpose of $X_{1}$. Moreover, 
\begin{equation}
    \begin{aligned}
        \wt \lambda_j(Z_x)=-\wt \lambda_{j}(-Z_x).
    \end{aligned}
    \label{eq:lem2_8}
\end{equation}

\end{lemma}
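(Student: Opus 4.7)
The plan is to express the required moments of $\wt X_1$ through the cumulant generating function $K(s):=\log\hat G(s)$ of $X_1$ evaluated at $s:=\sqrt{2h}\,DZ_x/\sqrt{n}$, and then Taylor expand in $s$. From \eqref{eq:2_2} the conditional moment generating function of $\wt X_1$ given $Z_x$ equals $\hat G(s+Dt')/\hat G(s)$, so $\wt\mu_1=D\,\nabla K(s)$ and $\wt\Sigma=D\,\Hess K(s)\,D$. The small parameter is $\lvert s\rvert\le \sqrt{2h}\,\lvert DZ_x\rvert/\sqrt n\le 3x/\sqrt n$, using $\sqrt{2h}\le 1$, $\norm{D}_{op}=1$, and $\lvert Z_x\rvert\le z_0=3x$, which is at most $3\varepsilon n^{-1/3}$ whenever $x\le \varepsilon n^{1/6}$. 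For $\varepsilon$ small this puts $s$ inside $\{\lvert s\rvert\le t_0/2\}$, where the elementary bound $\lvert X_1\rvert^k \le C_k e^{(t_0/2)\lvert X_1\rvert}$ combined with $\IE e^{t_0\lvert X_1\rvert}\le c_0$ gives $\IE[\lvert X_1\rvert^k e^{\langle s,X_1\rangle}]\le C_k c_0$; this justifies every Taylor remainder estimate below with constants depending only on $d,t_0,c_0$.

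For \eqref{f25} I would expand $\nabla K(s)=s+\tfrac12\IE[X_1\langle s,X_1\rangle^2]+r(s)$ with $\lvert r_j(s)\rvert\le C\lvert s\rvert^3$ (the next-order coefficient being the fourth-cumulant tensor, uniformly bounded by the exponential moment hypothesis); substituting $s$ and left-multiplying by $D$ yields the decomposition with $V$ componentwise $O(x^3/n^{3/2})$. For \eqref{f33} expand $\Hess K(s)=I_d+\IE[X_1X_1^T\langle s,X_1\rangle]+O(\lvert s\rvert^2)$ entrywise; the linear term becomes $n^{-1/2}\IE^{Z_x}\{\langle\sqrt{2h}\,DZ_x,X_1\rangle X_1X_1^T\}$, and conjugating by $D$ produces \eqref{f33} with $R$ having entries $O(x^2/n)$. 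Positive definiteness of $\wt\Sigma$ then follows because the correction matrix inside the parentheses of \eqref{f33} has operator norm $O(x/\sqrt n)\le O(\varepsilon n^{-1/3})<1$ for $\varepsilon$ small.

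The remaining identities are variants of the same expansion. For \eqref{f26} and \eqref{f30} I would directly expand $\IE[(DX_1)_j(DX_1)_k(DX_1)_l\,e^{\langle s,X_1\rangle}]/\hat G(s)$ and $\IE[\lvert DX_1\rvert^4 e^{\langle s,X_1\rangle}]/\hat G(s)$ to zeroth order, bounding each remainder by one factor of $\lvert s\rvert=O(x/\sqrt n)$ times a tilted moment; the $q_j^{1/2}q_k^{1/2}q_l^{1/2}$ factor in \eqref{f26} is extracted via the pointwise bound $\lvert(DX_1)_m\rvert\le q_m^{1/2}\lvert X_1\rvert$. For \eqref{eq:lem2_9}, use $\det(I_d+B)=1+\tr B+O(\norm{B}_{op}^2)$ with $B$ the bracket in \eqref{f33}, noting that the trace of $\IE^{Z_x}\{\langle\sqrt{2h}\,DZ_x,X_1\rangle X_1X_1^T\}$ equals $\sum_j\wt\lambda_j$ by definition. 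For \eqref{f24} apply the Neumann series $(I_d+B)^{-1}=I_d-B+B^2-\cdots$; the bound on $R'$ reduces to $\norm{R}_{op}+\norm{B}_{op}^2=O(x^2/n)$. Finally, \eqref{eq:lem2_8} is immediate because the matrix $\IE^{Z_x}\{\langle\sqrt{2h}\,DZ_x,X_1\rangle X_1X_1^T\}$ is linear in $Z_x$ and hence odd, so its eigenvalues flip sign under $Z_x\mapsto -Z_x$. The principal technical issue is just the uniform control of the MGF-weighted moments noted in the first paragraph, which the exponential moment hypothesis handles cleanly.
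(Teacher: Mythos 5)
Your proposal is correct and follows essentially the same route as the paper: both Taylor-expand the tilted moments of $X_1$ in the small parameter $s=\sqrt{2h}DZ_x/\sqrt n=O(x/\sqrt n)$, controlling all remainders through the exponential moment hypothesis, and both obtain \eqref{eq:lem2_9}, \eqref{f24} via the determinant/Neumann expansions of $I_d+B$ and \eqref{eq:lem2_8} from linearity of the matrix $\IE^{Z_x}\{\langle\sqrt{2h}DZ_x,X_1\rangle X_1X_1^T\}$ in $Z_x$. The only cosmetic difference is that you package the expansion through the cumulant generating function $K=\log\hat G$ (so $\wt\mu_1=D\nabla K(s)$, $\wt\Sigma=D\,\Hess K(s)\,D$) rather than expanding $e^r$ directly in the moment ratios as the paper does; this is the same computation up to relabeling of the remainder terms.
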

% Let $\wt{Z}$ be a random variable with distribution
% \ben{\label{f8}
% dP(\widetilde Z\leq y)=\frac{Ee^{\sqrt{2h}Z_x\cdot y}}{Ee^{\sqrt{2h}Z_x\cdot Z}}dP(Z\leq y).
% }
  The following lemma concerns the Radon--Nikodym derivative in \cref{eq:2_1}:
  \begin{lemma}
      There exists a positive constant $\varepsilon$ such that for $1< x\leq \varepsilon
    n^{1/6}$, we have, given any $Z_{x}$,
      \begin{equation}
          \begin{aligned}
              \wt p(y-\sqrt{n}
                   \wt \mu_{1} ) +\frac{1}{6\sqrt{n}}\IE^{Z_{x}}\Bigl\{\wt p'''(y-\sqrt{n}
                       \wt \mu_{1})
                    (\widetilde{X}_{1}-\wt \mu_{1})^3\Bigr\}= H_{1}(y) + H_{2}(y),\quad y\in \mathbb{R}^d,
          \end{aligned}
          \label{eq:lem4_1}
      \end{equation}
      where  
              \begin{equation}
                  \begin{aligned}
                      H_{1}(y)&= \exp \bigl\{-h \lvert D Z_{x}\rvert^{2}+ \langle \sqrt{2h} Z_x, y\rangle 
                          \bigr\}\phi(D^{-1} y) (\det D)^{-1} \\&\times\biggl(1+B_{0}+
                      O\biggl(\frac{x^{2}}{n}\biggr)\biggr)\biggl(1+B_{1}+ O\biggl(\frac{x^{4}}{n}+
					  \frac{x^{2}\lvert D^{-1} y\rvert^{2}}{n}\biggr)\biggr)\biggl(1+B_{2}+
				  O\biggl(\frac{x^{4}}{n}+ \frac{x \lvert D^{-1} y\rvert^{3}}{n}\biggr)\biggr),
                  \end{aligned}
                  \label{eq:lem4_11}
              \end{equation}
              
$$B_{0}=-\frac{1}{2\sqrt{n}} \sum^{d}_{j=1} \wt \lambda_{j},\quad  \text{(cf. \eq{f29})}$$
  $$B_{1}=\frac{1}{2\sqrt{n}} \IE^{Z_x}\Bigl\{\langle \sqrt{2h} D Z_x, 
	  X_{1}\rangle\langle X_{1}, D^{-1} y\rangle^{2}-\langle \sqrt{2h} D Z_x, 
	  X_{1}\rangle^{2}\langle X_{1}, D^{-1} y\rangle\Bigr\} ,$$
$$B_{2}=\frac{1}{6\sqrt{n}} \IE^{Z_x}  \left\{ 3\langle   X_{1}, X_{1}\rangle\langle
	D^{-1} y-\sqrt{2h} D Z_x,  {X}_{1}\rangle-
		  \langle D^{-1}y-\sqrt{2h}D Z_x,  X_{1}\rangle^3 \right\},$$
              and
              \begin{equation}
                  \begin{aligned}
                      |H_{2}(y)|&\leq C(\det D)^{-1}\phi(D^{-1}y) \exp \biggl\{-h \lvert D Z_{x}\rvert^{2}+ \sqrt{2h}
						  \langle  Z_x, y\rangle 
						  +  C\biggl(\frac{x \lvert D^{-1} y\rvert^{2}+x^{2} \lvert
					  D^{-1}  y\rvert}{\sqrt{n}}\biggr)\biggr\}
							  \\&\quad\times\Bigl(\frac{x^{2}|D^{-1}y|^{4}}{n}+\frac{x^{4}|D^{-1}y|^{2}}{n}+\frac{x^{2}}{n}\Bigr)\Bigl(1+|B_{0}|+
                      \frac{Cx^{2}}{n}\Bigr)\biggl(1+|B_{2}|+
                  C\biggl(\frac{x^{4}}{n}+ \frac{x \lvert D^{-1} y\rvert^{3}}{n}\biggr)\biggr). 
                  \end{aligned}
                  \label{eq:lem4_14}
              \end{equation}
      \label{lem:4}
  \end{lemma}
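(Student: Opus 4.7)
The plan is to carry out explicit Taylor expansions of the Gaussian density $\wt p(y-\sqrt n\wt \mu_1)$ and of the Edgeworth correction $\frac{1}{6\sqrt n}\IE^{Z_x}\{\wt p'''(y-\sqrt n\wt\mu_1)(\wt X_1-\wt\mu_1)^3\}$, using the refined information on $\wt\mu_1$, $\wt\Sigma$, $\wt\Sigma^{-1}$, $\det\wt\Sigma$ and the conditional moments of $\wt X_1$ supplied by \cref{l4}. The three parenthesised factors $(1+B_0)$, $(1+B_1)$, $(1+B_2)$ in $H_1(y)$ will arise respectively from the determinant prefactor, from the quadratic exponent, and from the third-order correction.

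First I would write $\wt p(y-\sqrt n\wt\mu_1)=(2\pi)^{-d/2}(\det\wt\Sigma)^{-1/2}\exp(-\tfrac12 u^T\wt\Sigma^{-1}u)$ with $u=y-\sqrt n\wt\mu_1$. The determinant estimate \eq{eq:lem2_9} immediately yields $(\det\wt\Sigma)^{-1/2}=(\det D)^{-1}(1+B_0+O(x^2/n))$. Using \eq{f25} to write $\sqrt n\wt\mu_1=\sqrt{2h}QZ_x+\frac{1}{2\sqrt n}D\IE^{Z_x}\{\langle\sqrt{2h}DZ_x,X_1\rangle^2 X_1\}+\sqrt n DV$ and \eq{f24} for $\wt\Sigma^{-1}$, I would set $u_0=D^{-1}y-\sqrt{2h}DZ_x$ and expand $u^T\wt\Sigma^{-1}u$ around $|u_0|^2=|D^{-1}y|^2-2\sqrt{2h}\langle Z_x,y\rangle+2h|DZ_x|^2$. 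The leading term produces the exponent in $H_1(y)$; the two $1/\sqrt n$-order corrections, namely the cross term $2\langle u_0,D^{-1}(\sqrt n\wt\mu_1-\sqrt{2h}QZ_x)\rangle$ and the quadratic contribution $-\frac{1}{\sqrt n}u_0^T\IE^{Z_x}\{\langle\sqrt{2h}DZ_x,X_1\rangle X_1 X_1^T\}u_0$, combine (after a cancellation of the cubic $\IE^{Z_x}\langle\sqrt{2h}DZ_x,X_1\rangle^3$ term) into exactly $-2B_1$. Exponentiating then yields the second factor $(1+B_1+O(\cdot))$.

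For the Edgeworth term I would use \eq{eq:2_3} with $u=\wt X_1-\wt\mu_1$, replace $\wt\Sigma^{-1}$ and $\wt\Sigma^{-1}(y-\sqrt n\wt\mu_1)$ by their leading approximations $D^{-2}$ and $D^{-2}y-\sqrt{2h}Z_x$, and substitute the conditional third and fourth moments of $\wt X_1-\wt\mu_1$ by those of $DX_1$ via \eq{f26}--\eq{f30}, up to errors of order $x/\sqrt n$. Factoring out $\wt p(y-\sqrt n\wt\mu_1)$ then produces $B_2$ with a relative error $O(x^4/n+x|D^{-1}y|^3/n)$, which gives the third bracket in $H_1$.

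Assembling the three expansions as a product introduces various $1/n$-order cross terms and leftover quadratic-in-$|D^{-1}y|$ contributions (from $R'$, $V$, $|u_1|^2$, and from $B_1^2$) that cannot be packaged cleanly into a single factor of $H_1$; these are collected into $H_2(y)$. Their total size is controlled by $O(x^2|D^{-1}y|^4/n+x^4|D^{-1}y|^2/n+x^2/n)$ after the observations $|B_1|=O(x|D^{-1}y|^2/\sqrt n+x^2|D^{-1}y|/\sqrt n)$ and $|B_2|=O((|D^{-1}y|^3+x^3)/\sqrt n)$. The unabsorbed residue in the exponent, coming from cubic/quartic terms that are only linear or quadratic in $D^{-1}y$, is retained as the factor $\exp(C(x|D^{-1}y|^2+x^2|D^{-1}y|)/\sqrt n)$ in \eq{eq:lem4_14}. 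The main obstacle is precisely this bookkeeping: one must verify that every error arising from the expansions falls into either (i) an exponential factor of the stated form, uniformly in $y$, or (ii) a polynomial factor of the stated form after pulling out the Gaussian density $\phi(D^{-1}y)(\det D)^{-1}e^{\sqrt{2h}\langle Z_x,y\rangle-h|DZ_x|^2}$. Choosing $\varepsilon$ small enough that $x/\sqrt n$, $x^3/n$ and $x^4/n$ are all bounded by a prescribed constant on $1<x\le\varepsilon n^{1/6}$ makes all the Taylor expansions of $\exp$, $(1+\cdot)^{-1/2}$ and $(1+\cdot)^{-1}$ uniformly valid, completing the proof.
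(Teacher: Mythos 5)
Your proposal is correct and follows essentially the same route as the paper's proof: expand the determinant of $\wt\Sigma$ to produce the $(1+B_0)$ factor (the paper's \cref{eq:lem4_2}), expand the quadratic form $(y-\sqrt n\wt\mu_1)^T\wt\Sigma^{-1}(y-\sqrt n\wt\mu_1)$ about $|D^{-1}y-\sqrt{2h}DZ_x|^2$ with the $\IE^{Z_x}\langle\sqrt{2h}DZ_x,X_1\rangle^3$ cancellation yielding $B_1$ in the exponent (the paper does this via the separate identities \cref{eq:lem4_3,eq:lem4_4,eq:lem4_5}, which is equivalent bookkeeping to your $u_0-\delta$ substitution), expand the Edgeworth factor via \cref{f26}--\cref{f30} to get $(1+B_2)$, and finally split off $H_2$ as the Taylor remainder $\exp(B_1+O(\cdot))-1-B_1-O(\cdot)$ controlled by $|e^v-1-v|\le\tfrac12 v^2e^{|v|}$. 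One notational slip: the cross term you quote should carry a minus sign, $-2\langle u_0,D^{-1}(\sqrt n\wt\mu_1-\sqrt{2h}QZ_x)\rangle$, but since you correctly conclude that the combined $1/\sqrt n$-order contribution is $-2B_1$, this does not affect the argument.
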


Now we are ready to prove \cref{p1}.

\begin{proof}[Proof of \cref{p1}]
By \eq{f7} we have 
\begin{equation}
    \begin{aligned}
        \P(|D W|> x)&=\IE  e^{\langle \sqrt{2h} D Z_{x}, W\rangle } \int_{\lvert D y\rvert> x}^{ } \Bigl(
        \IE e^{\langle \sqrt{2h} D Z_{x} , y\rangle }\Bigr)^{-1} d \P( \widetilde{W}\leq y)\\
        &=  \IE  e^{\langle \sqrt{2h} D Z_{x}, W\rangle } \int_{\lvert  y\rvert> x}^{ } \Bigl(
        \IE e^{\langle \sqrt{2h}  Z_{x} , y\rangle }\Bigr)^{-1} d \P(D \widetilde{W}\leq y)\\
                     &= \IE  e^{\langle \sqrt{2h} D Z_{x}, W\rangle } \int_{\lvert  y\rvert> x}^{ } \Bigl(
						 \IE e^{\langle \sqrt{2h}  Z_{x}, y\rangle }\Bigr)^{-1} d
						 \wt\Phi(y)\\
                     &\quad+ \IE  e^{\langle \sqrt{2h}D  Z_{x}, W\rangle } \int_{\lvert  y\rvert> x}^{ } \Bigl(
			\IE e^{\langle \sqrt{2h}  Z_{x}, y\rangle }\Bigr)^{-1} d \bigl(\P(D\widetilde{W}\leq
		y)-\wt\Phi(y)\bigr)\\=:&I+II,
    \end{aligned}
    \label{eq:prop1}
\end{equation}
where $\P(D\widetilde{W}\leq y)$, $y\in \mathbb{R}^d$, denotes the multivariate distribution function of $D\wt W$.
Then according to \cref{eq:2_1}, we have 
\begin{equation}
    \begin{aligned}
        I&=   \IE \{ e^{\langle \sqrt{2h}D  Z_{x}, W\rangle }\} \int_{\lvert  y\rvert> x}^{ } \Bigl(
     \IE e^{\langle \sqrt{2h} Z_{x}, y\rangle }\Bigr)^{-1} d \wt\Phi(y)\\&= \int_{\lvert  y\rvert> x}^{ } \Bigl(
       \IE e^{\langle \sqrt{2h}  Z_{x}, y\rangle }\Bigr)^{-1} \\
																		  &\quad\quad\times \IE \left[\hat G^n (\sqrt{2h}D Z_x/\sqrt{n}) \Bigl(\wt p(y-\sqrt{n}
                    \wt \mu_{1} ) +\frac{1}{6\sqrt{n}}\IE^{Z_{x}}\Bigl\{\wt p'''(y-\sqrt{n}
                       \wt \mu_{1})
        (\widetilde{X}_{1}-\wt \mu_{1})^3\Bigr\}  \Bigr) \right]dy.
    \end{aligned}
    \label{eq:prop2}
\end{equation}
By \cref{lem:4} we have
\begin{equation}
    \begin{aligned}
        I&=   \int_{\lvert  y\rvert> x}^{ } \Bigl(
        \IE e^{\langle \sqrt{2h} Z_{x}, y\rangle }\Bigr)^{-1} \IE \left[\hat G^n (\sqrt{2h}DZ_x/\sqrt{n})
H_{1}(y) \right]dy\\&\quad +\int_{\lvert  y\rvert> x}^{ } \Bigl(
        \IE e^{\langle \sqrt{2h} Z_{x}, y\rangle }\Bigr)^{-1} \IE \left[\hat G^n (\sqrt{2h}DZ_x/\sqrt{n})
H_{2}(y) \right]dy\\&= I_{1}+I_{2}.
    \end{aligned}
    \label{eq:lem4_15}
\end{equation}
We will show below that for $1<x\leq \varepsilon n^{1/6}$ with a sufficiently small $\varepsilon>0$,
\begin{equation}
    \begin{aligned}
       \hat{G}^{n}(\sqrt{2h} D Z_x/\sqrt{n})=\Bigl(1+O\Bigl( \frac{x^{6}}{n}\Bigr)\Bigr)e^{h
       |D Z_x|^{2} }(1+B_{3}),
    \end{aligned}
    \label{eq:prop11}
\end{equation}
where 
\ben{\label{f34}
B_{3}=\frac{1}{6\sqrt{n}}\IE^{Z_x}  \langle\sqrt{2h} D Z_x , 
       X_{1}\rangle^{3}=O\left(\frac{x^3}{\sqrt{n}}\right).
}
To prove \eq{eq:prop11}, we need the following lemma, whose proof is postponed to \cref{s5}.
\begin{lemma}\label{l1}
    There exist positive constants $\varepsilon$ and $C$ such that for $|a|\leq \varepsilon
    \sqrt{n}$,   
\begin{equation}
    \begin{aligned}
      &\quad \biggl\lvert \E e^{\langle a, W\rangle }-
          \exp\biggl(\frac{|a|^2}{2}\biggr)\biggl(1+\frac{\IE \langle a,
           X_{1}\rangle^{3}}{6\sqrt{n}}\biggr)\biggr\rvert\\&\leq C\biggl(\frac{1}{n}  \lvert a\rvert^{4} +\frac{1}{n}  \lvert a\rvert^{6}\biggr) \exp
            \Bigl\{  \frac{\lvert a\rvert^{2}}{2}
                +
            \frac{C\lvert a\rvert^{3}}{\sqrt{n}}    \Bigr\}.
    \end{aligned}
    \label{eq:lem3_16}
\end{equation}
\end{lemma}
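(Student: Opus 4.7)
The plan is to exploit the i.i.d.\ structure and reduce to an expansion of the single-summand moment generating function $\hat{G}(t) := \E e^{\langle t, X_1\rangle}$. By independence, $\E e^{\langle a, W\rangle} = \hat{G}(t)^n$ with $t := a/\sqrt{n}$, so $|t| \leq \varepsilon$ throughout the range $|a| \leq \varepsilon\sqrt{n}$. The proof is a three-step Edgeworth-type expansion: Taylor expand $\hat{G}$ to third order; take logarithm; and exponentiate after multiplying by $n$, keeping track of remainders so that the error has the stated shape without introducing a spurious exponential factor.

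For the Taylor step, I apply the pointwise inequality $|e^y - 1 - y - y^2/2 - y^3/6| \leq (|y|^4/24)\,e^{|y|}$ to $y = \langle t, X_1\rangle$, take expectation, use $\E X_1 = 0$ and $\Cov(X_1) = I_d$, and bound the remainder by $\E|X_1|^4 e^{|t||X_1|} \leq C\, \E e^{t_0|X_1|}$ for $|t| \leq t_0/2$. This yields $\hat{G}(t) = 1 + |t|^2/2 + \E\langle t, X_1\rangle^3/6 + r(t)$ with $|r(t)| \leq C|t|^4$. Since $\hat{G}(t) - 1 = O(|t|^2)$ is bounded away from $-1$ for $|t|$ small, the expansion $\log(1+u) = u - u^2/2 + O(u^3)$ applied to $u = \hat{G}(t) - 1$ gives $\log \hat{G}(t) = |t|^2/2 + \E\langle t, X_1\rangle^3/6 + R(t)$ with $|R(t)| \leq C|t|^4$, since the $u^2$ and $u^3$ contributions are all $O(|t|^4)$.

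Multiplying by $n$ and substituting $t = a/\sqrt{n}$, define $A := \E\langle a, X_1\rangle^3/(6\sqrt{n})$ and $B := nR(a/\sqrt{n})$, so $|A| \leq C|a|^3/\sqrt{n}$, $|B| \leq C|a|^4/n$, and $\E e^{\langle a, W\rangle} = e^{|a|^2/2} e^{A+B}$. Writing $e^{A+B} - (1+A) = (e^A - 1 - A) + e^A(e^B - 1)$ and invoking $|e^A - 1 - A| \leq (A^2/2)\,e^{|A|}$ and $|e^B - 1| \leq |B|\,e^{|B|}$ gives
\[
\bigl|\E e^{\langle a, W\rangle} - e^{|a|^2/2}(1+A)\bigr| \leq C\Bigl(\frac{|a|^6}{n} + \frac{|a|^4}{n}\Bigr) e^{|a|^2/2 + |A| + |B|}.
\]
The main obstacle is verifying that $e^{|A|+|B|}$ does not exceed the target factor $e^{C|a|^3/\sqrt{n}}$ appearing in \eqref{eq:lem3_16}. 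This follows from the key observation that $|B| \leq C|a|^4/n = C \cdot (|a|/\sqrt{n}) \cdot (|a|^3/\sqrt{n}) \leq C\varepsilon \cdot |a|^3/\sqrt{n}$, which is valid precisely because of the hypothesis $|a| \leq \varepsilon \sqrt{n}$; combined with $|A| \leq C|a|^3/\sqrt{n}$, this absorbs $e^{|A|+|B|}$ into $e^{C'|a|^3/\sqrt{n}}$ after enlarging the constant, yielding the stated bound. Everything else is routine algebra.
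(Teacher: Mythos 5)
Your proof is correct and follows essentially the same route as the paper: reduce to $\hat G^n(a/\sqrt n)$, expand $\log\hat G$ to third order with a quartic remainder, split $e^{A+B}-(1+A)$ as $(e^A-1-A)+e^A(e^B-1)$, and absorb the $|a|^4/n$ contribution into the $C|a|^3/\sqrt n$ exponent using $|a|\leq\varepsilon\sqrt n$. The only (cosmetic) difference is that you obtain the quartic remainder of $\log\hat G$ by composing a third-order Taylor expansion of $\hat G$ with the series for $\log(1+u)$, whereas the paper Taylor-expands $\log\hat G$ directly and controls its fourth derivative via a Fa\`a di Bruno computation (its Lemma 5.1); both yield the same $C|t|^4$ bound.
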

Replacing $a$ in \cref{l1} with $\sqrt{2h} D Z_x$, we obtain \cref{eq:prop11}.

We first consider $I_{2}$. By \cref{eq:lem4_14,eq:prop11} and recalling that $1<x\leq \varepsilon
n^{\frac{1}{6}}$ and $\lvert Z_{x}\rvert\leq 3x$,
\begin{equation}
    \begin{aligned}
        |I_{2}|&\leq C\int_{\lvert  y\rvert> x}^{ }\biggl(\frac{x^{2}|D^{-1}y|^{4}}{n}+\frac{x^{4}|D^{-1}y|^{2}}{n}+\frac{x^{2}}{n}\biggr) (
		\IE e^{\langle \sqrt{2h} Z_{x}, y\rangle })^{-1}\phi(D^{-1}y)
			(\det{D})^{-1}\\&\quad\quad\quad\quad\quad\times \IE \biggl[(1+B_{3})
                  \exp \biggl\{ \langle \sqrt{2h} Z_x , y\rangle 
					  +  C\biggl(\frac{x \lvert D^{-1}y\rvert^{2}+x^{2} \lvert
				  D^{-1}  y\rvert}{\sqrt{n}}\biggr)\biggr\}
                              \\&\quad\quad\quad\quad\quad\times\biggl(1+|B_{0}|+
                      C\left(\frac{x^{2}}{n}\right)\biggr)\biggl(1+|B_{2}|+
			  C\biggl(\frac{x^{4}}{n}+ \frac{x \lvert D^{-1} y\rvert^{3}}{n}\biggr)\biggr)      \biggr]dy
\\&\leq C\int_{\lvert  y\rvert> x}^{ }\frac{x^{2}|D^{-1}y|^{4}}{n} (
			\IE e^{\langle \sqrt{2h}  Z_{x} , y\rangle })^{-1}\phi(D^{-1}y)
			(\det{D})^{-1}\\&\quad\quad\quad\quad\quad\times\IE \biggl[
                  \exp \biggl\{ \langle \sqrt{2h} D Z_x , y\rangle 
					  +  C\biggl(\frac{x \lvert D^{-1}y\rvert^{2}}{\sqrt{n}}\biggr)\biggr\}
                              \biggl(1+
                                   \frac{ \lvert
						  D^{-1} y\rvert^{3}}{\sqrt{n}}\biggr)      \biggr]dy 
%\\&\leq C\int_{\lvert  y\rvert> x}^{ }\Bigl(\frac{x^{2}|D^{-1}y|^{4}}{n}+\frac{x^{4}|D^{-1}y|^{3}}{n}+\frac{x^{2}}{n}\Bigr)   
%                 \phi(D^{-1}y)
%				(\det{D})^{-1}\\&\quad\quad\quad\quad\quad\times \exp \biggl\{   C\biggl(\frac{x \lvert D^{-1}  y\rvert^{2}+x^{2} \lvert
%			D^{-1} y\rvert}{\sqrt{n}}\biggr)\biggr\}
%                              \biggl(1+
 %                                 C\biggl( \frac{ \lvert
%							  D^{-1} y\rvert^{3}}{\sqrt{n}}\biggr)\biggr)      dy
							  \\&\leq \frac{C}{\det D}\int_{\lvert  y\rvert> x}^{ }\frac{x^{2}|D^{-1}y|^{4}}{n} \biggl(1+
                                   \frac{ \lvert
							  D^{-1}y\rvert^{3}}{\sqrt{n}}\biggr)  
							  \exp \biggl\{-\frac{\lvert D^{-1} y\rvert^{2}}{2}+
						  C\frac{x \lvert D^{-1} y\rvert^{2}}{\sqrt{n}}\biggr\}
                                   dy,
    \end{aligned}
    \label{eq:lem5_4}
\end{equation}
where in the second inequality, we used $B_0=O\bigl(\frac{x}{\sqrt{n}}\bigr),
B_2=O\bigl(\frac{x^3+|D^{-1}y|^3}{\sqrt{n}}\bigr), B_3=O\bigl(\frac{x^3}{\sqrt{n}}\bigr)$, 
and $|D^{-1}y|>x$ if $|y|>x$.
%and in
%the last inequality, we note that if $\frac{x^2|D^{-1}y|}{\sqrt{n}}>\frac{x|D^{-1}y|^2}{\sqrt{n}}$,
%then $x>|D^{-1}y|$ and $\frac{x^2|D^{-1}y|}{\sqrt{n}}<\frac{x^3}{\sqrt{n}}=O(1)$.
It remains to consider the integral in \cref{eq:lem5_4}, and we will use the following lemma, which is proved in \cref{s5}:
\begin{lemma}
    For any $r\geq 2-d$ and $1<x\leq \varepsilon n^{1/6}$ for a sufficiently small $\varepsilon$, 
\begin{equation}
    \begin{aligned}
        \int_{\lvert D y\rvert> x}^{ } \lvert y\rvert^{r} 
                          \exp \biggl\{-\frac{\lvert y\rvert^{2}}{2}+
                          C\frac{x \lvert y\rvert^{2}}{\sqrt{n}}\biggr\}
                                   dy
     \leq C(r) x^{r} \IP(\lvert D Z\rvert>x),
    \end{aligned}
    \label{eq:lem5_5}
\end{equation}
where $C(r)$ denotes positive constants depending only on $r$, $d$, $t_0$, and $c_0$.

	\label{lem:8}
\end{lemma}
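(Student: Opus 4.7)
My plan is to split the domain $\{|Dy|>x\}$ by the size of $|y|$, using two elementary facts. First, $q_i \leq 1$ gives $|Dy| \leq |y|$, so $|Dy|>x$ implies $|y|>x$. Second, $q_1 = 1$ gives $|DZ|^2 \geq Z_1^2$, so
\[
\P(|DZ|>x) \geq \P(|Z_1|>x) \geq c\,e^{-x^2/2}/x \quad \text{for } x \geq 1
\]
by Mill's ratio; this is the crucial uniform-in-$Q$ lower bound. I first absorb the perturbation by writing the exponent as $-\alpha|y|^2$, where $\alpha := 1/2 - Cx/\sqrt n \in [1/4, 1/2]$ for sufficiently small $\varepsilon$ (since $x \leq \varepsilon n^{1/6}$ gives $Cx/\sqrt n \leq C\varepsilon$).

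On the bulk $B := \{|Dy|>x,\,|y| \leq \sqrt 2\, x\}$, both corrections are mild. The exponential perturbation satisfies $Cx|y|^2/\sqrt n \leq 2C\varepsilon^3$, so $e^{Cx|y|^2/\sqrt n}$ is a bounded factor. The polynomial satisfies $|y|^r \leq C(r)\,x^r$: for $r \geq 0$ because $|y| \leq \sqrt 2\, x$, and for $r < 0$ because $|y|>x$. Thus
\[
\int_B |y|^r e^{-|y|^2/2 + Cx|y|^2/\sqrt n}\,dy \leq C(r)\,x^r\int_{|Dy|>x} e^{-|y|^2/2}\,dy = C(r)(2\pi)^{d/2}\,x^r\,\P(|DZ|>x).
\]

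For the tail, the remaining piece is dominated by the $Q$-free integral $\int_{|y|>\sqrt 2\, x} |y|^r e^{-\alpha|y|^2}\,dy$. Polar coordinates followed by $u = \sqrt{2\alpha}\rho$ reduce this to $c_d\,(2\alpha)^{-(r+d)/2}\int_{2\sqrt\alpha\, x}^\infty u^{r+d-1} e^{-u^2/2}\,du$; since $2\sqrt\alpha \in [1,\sqrt 2]$, the new lower limit $2\sqrt\alpha\,x \geq x \geq 1$. The hypothesis $r \geq 2-d$ forces $r+d-1 \geq 1$, so iterating integration by parts yields the standard Gaussian-tail bound $\int_a^\infty u^{r+d-1} e^{-u^2/2}\,du \leq C(r,d)\,a^{r+d-2}\,e^{-a^2/2}$ for $a \geq 1$. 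Combined with $2\alpha\,x^2 = x^2 - 2Cx^3/\sqrt n \geq x^2 - 2C\varepsilon^3$, the tail is bounded by $C(r,d)\,x^{r+d-2}\,e^{-x^2}$.

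Dividing by $x^r\,\P(|DZ|>x) \geq c\,x^{r-1}\,e^{-x^2/2}$ gives a ratio at most $C(r,d)\,x^{d-1}\,e^{-x^2/2}$, which is bounded uniformly for $x \geq 1$. Summing the bulk and tail contributions yields \eqref{eq:lem5_5}; all constants depend only on $r, d, t_0, c_0$, with the $Q$-dependence confined to the uniform bound $q_1 = 1$ used in the Mill's-ratio lower bound. The main obstacle is the tail piece: without the Gaussian decay $e^{-x^2/2}$ absorbing the polynomial factor $x^{d-1}$, the bound would fail in higher dimensions, and the care in choosing the split point $|y| = \sqrt 2\, x$ is precisely what ensures this cancellation without forfeiting the $Q$-uniform constant.
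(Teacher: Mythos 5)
Your proof is correct, and it takes a genuinely different route from the paper's. The paper foliates the region $\{|Dy|>x\}$ by spheres, introduces the surface fraction $\xi(u/x)$ of $\partial B(0,u)$ lying outside the ellipsoid, proves the uniform lower bound $\xi(1+\delta)\geq 1/8$ via the half-space $\{y_1>x\}$, and then runs an iterated integration by parts that lowers the power $u^{r+d-1}$ to $u^{d-1}$ while carrying the weight $\xi(u/x)$ along, so that the final integral is \emph{exactly} $(2\pi)^{d/2}\P(|DZ|>x)$; the absorption of the perturbation $Cxu^2/\sqrt n$ happens inside the integration by parts through the factors $(1-2Cx/\sqrt n)^{-1}$. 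You instead split at $|y|=\sqrt2\,x$: on the bulk the polynomial and exponential corrections are both $O(1)$ multiples, so the comparison to $\P(|DZ|>x)$ is immediate, and on the tail you discard the ellipsoid constraint entirely, use the standard Gaussian-tail estimate (valid since $r+d-1\geq 1$), and close the loop with the Mills-ratio lower bound $\P(|DZ|>x)\geq\P(|Z_1|>x)\geq c\,e^{-x^2/2}/x$, which exploits $q_1=1$ exactly as the paper's inclusion $\{|Dy|>x\}\supset\{|y_1|>x\}$ does. Your argument is shorter and more elementary, at the cost of being "lossy" on the tail (the surplus factor $e^{-x^2/2}$ does the work of absorbing $x^{d-1}$); the paper's argument never needs a lower bound on $\P(|DZ|>x)$ within this lemma because the comparison stays exact, but it pays for that with the bookkeeping around $\xi$ and the repeated integration by parts. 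Both yield constants depending only on $r$, $d$, $t_0$, $c_0$, and both cover $d=1$ (the paper treats it separately; your polar-coordinate step degenerates harmlessly there since $r\geq 1$). One cosmetic remark: the specific split point $\sqrt2\,x$ is not essential --- any fixed multiple $\lambda x$ with $\lambda>1$ leaves enough surplus Gaussian decay --- so the final sentence of your write-up slightly overstates the delicacy of that choice.
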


Combining \cref{eq:lem5_4,eq:lem5_5} (the factor $\frac{1}{\det D}$ disappears after a change of variable), we obtain 
\begin{equation}
    \begin{aligned}
        I_{2}=  O\biggl(\frac{x^{6}}{n}\biggr) \IP\left( \lvert D Z\rvert\geq x \right).
    \end{aligned}
    \label{eq:lem5_6}
\end{equation}

We now consider $I_1$ in \eq{eq:lem4_15}.
By the definition of $H_{1}(y)$ in \eq{eq:lem4_11} and \cref{eq:prop11}, we have 
\ban{
                    &\quad \hat G^n (\sqrt{2h}D Z_x/\sqrt{n})H_{1}(y)\nonumber\\&= \hat G^n
                    (\sqrt{2h}DZ_x/\sqrt{n})\exp \bigl\{-h \lvert DZ_{x}\rvert^{2}+ \langle \sqrt{2h} Z_x , y\rangle 
                      \bigr\}\phi(D^{-1}y)(\det D)^{-1} \nonumber\\&\quad\times  \left( 1+B_{0}+
                      O\Bigl(\frac{x^{2}}{n}\Bigr) \right )\biggl(1+B_{1}+ O\Bigl(\frac{x^{4}}{n}+
                      \frac{x^{2}\lvert D^{-1} y\rvert^{2}}{n}\Bigr)\biggr)\biggl(1+B_{2}+
                  O\biggl(\frac{x^{4}}{n}+ \frac{x \lvert D^{-1} y\rvert^{3}}{n}\biggr)\biggr)\nonumber\\
      &=  \biggl(1+O\Bigl(\frac{x^{6}}{n}\Bigr)\biggr)  \exp \bigl\{\langle \sqrt{2h} Z_x , y\rangle 
                      \bigr\}\phi(D^{-1} y)(\det D)^{-1} (1+B_3)\nonumber \\&\quad\times\biggl(1+B_{0}+
                      O\Bigl(\frac{x^{2}}{n}\Bigr)\biggr)\biggl(1+B_{1}+ O\Bigl(\frac{x^{4}}{n}+
                      \frac{x^{2}\lvert D^{-1} y\rvert^{2}}{n}\Bigr)\biggr)\biggl(1+B_{2}+
                  O\biggl(\frac{x^{4}}{n}+ \frac{x \lvert D^{-1} y\rvert^{3}}{n}\biggr)\biggr)
                                    \nonumber     \\&= \biggl(1+O\Bigl(\frac{x^{6}}{n}\Bigr)\biggr)\exp \bigl\{\langle \sqrt{2h}  Z_x  ,y\rangle 
                                             \bigr\}\phi(D^{-1}y) (\det D)^{-1}\nonumber\\
                                             &\quad \times   \left (1+B_{0}+B_{1}+B_{2}+B_{3}+O\left(
                                                 \frac{x^{6}}{n}+ \frac{\lvert D^{-1} y\rvert^{6}}{n}
                      \right) \right )\nonumber
                                         \\&= \exp \bigl\{\langle \sqrt{2h} Z_x, y\rangle 
                                             \bigr\}\phi(D^{-1}y)(\det D)^{-1}   \left (1+B_{0}+B_{1}+B_{2}+B_{3}+O\left(
                                                 \frac{x^{6}}{n}+ \frac{\lvert D^{-1} y\rvert^{6}}{n}
                      \right) \right ),\label{eq:lem5_7}
}
    where we used $1<x\leq \varepsilon n^{1/6}$,
    \be{
        B_0=O\biggl(\frac{x}{\sqrt{n}}\biggr),\
        B_1=O\biggl(\frac{x|D^{-1}y|^2+x^2|D^{-1}y|}{\sqrt{n}}\biggr),\
        B_2=O\biggl(\frac{x^3+|D^{-1}y|^3}{\sqrt{n}}\biggr),\ B_3=O\biggl(\frac{x^3}{\sqrt{n}}\biggr),
    }
    and straightforward simplifications for terms of order $\frac{1}{n}$.
              By \cref{eq:lem4_15} and \eq{eq:lem5_7}, 
              \begin{equation}
                  \begin{aligned}
                      I_{1}&= \int_{\lvert y\rvert> x}^{ } \phi(D^{-1}y) (\det D)^{-1}\Bigl(
                          \IE e^{\langle \sqrt{2h} Z_{x}, y\rangle }\Bigr)^{-1} \\&\quad\times \IE \left[\exp \bigl\{\langle \sqrt{2h} Z_x, y\rangle 
                          \bigr\}   \left (1+B_{0}+B_{1}+B_{2}+B_{3}+O\left(
                              \frac{x^{6}}{n}+ \frac{\lvert D^{-1} y\rvert^{6}}{n}
                  \right) \right ) \right]dy\\&= I_{11}+I_{12}+I_{13},
                  \end{aligned}
                  \label{eq:lem5_8}
              \end{equation}
    where 
    \begin{equation}
        \begin{aligned}
            I_{11}=\IP(\lvert D Z\rvert>x),
        \end{aligned}
        \label{eq:lem5_9}
    \end{equation}
    \begin{equation}
        \begin{aligned}
            I_{12}=\int_{\lvert y\rvert> x}^{ } \phi(D^{-1}y) (\det D)^{-1}\Bigl(
        \IE e^{\langle \sqrt{2h} Z_{x}, y\rangle }\Bigr)^{-1} \IE \left[\exp \bigl\{\langle \sqrt{2h} Z_x, y\rangle 
                          \bigr\}   \left (B_{0}+B_{1}+B_{2}+B_{3} \right ) \right]dy,
        \end{aligned}
        \label{eq:lem5_10}
    \end{equation}
    and 
    \begin{equation}
        \begin{aligned}
            I_{13}=\int_{\lvert y\rvert> x}^{ } \phi(D^{-1}y) (\det D)^{-1}
            O\left( \frac{x^{6}}{n}+ \frac{\lvert D^{-1} y\rvert^{6}}{n} \right)dy.  
        \end{aligned}
        \label{eq:lem5_11}
    \end{equation}
    Using \cref{lem:8}, we have
    \begin{equation}
        \begin{aligned}
            I_{13}=O  \left (\frac{x^{6}}{n} \right )\IP(\lvert D Z\rvert>x).
        \end{aligned}
        \label{eq:lem5_12}
    \end{equation}
    For $i=0,1,2,3$, let $f_{i}(y)=\IE \left[\exp \bigl\{\langle \sqrt{2h} Z_x, y\rangle  \bigr\}   B_{i}
    \right]$, and we can verify
   that 
   \begin{equation}
       \begin{aligned}
           f_{i}(-y)=-f_{i}(y). 
       \end{aligned}
       \label{eq:lem5_13}
   \end{equation}
   For example, for $f_{0}(y)$, recalling \eq{eq:lem2_8}, we have
   \begin{equation}
       \begin{aligned}
           f_{0}(-y)&= \IE \biggl[\exp \bigl\{\langle \sqrt{2h} Z_x, (-y)\rangle  \bigr\}  \biggl( -\frac{1}{2\sqrt{n}}
               \sum^{d}_{j=1} \wt \lambda_{j}(Z_{x})\biggr)
    \biggr]\\
                    &= \IE \biggl[\exp \bigl\{\langle \sqrt{2h} (-Z_x), (-y)\rangle  \bigr\}  \biggl( -\frac{1}{2\sqrt{n}}
               \sum^{d}_{j=1}\wt \lambda_{j}(-Z_{x})\biggr)
    \biggr]\\&= -\IE \biggl[\exp \bigl\{\langle \sqrt{2h} Z_x , y\rangle  \bigr\}  \biggl( -\frac{1}{2\sqrt{n}}
               \sum^{d}_{j=1}\wt \lambda_{j}(Z_{x})\biggr)
    \biggr]
    \\&= -f_{0}(y),
       \end{aligned}
       \label{eq:lem5_14}
   \end{equation}
   where the second equality holds because $Z_{x}$ has a symmetric distribution, that is, $\mathcal{L}(Z_x)=\mathcal{L}(-Z_x)$.
   Because $$\phi(-D^{-1}y)\Bigl(
       \IE e^{\langle \sqrt{2h} Z_{x}, (-y)\rangle }\Bigr)^{-1}=\phi(D^{-1}y)\Bigl(
          \IE e^{\langle \sqrt{2h} Z_{x}, y\rangle }\Bigr)^{-1},$$  we have
   \begin{equation}
       \begin{aligned}
           \int_{\lvert  y\rvert> x}^{ } \phi(D^{-1}y)\Bigl(
          \IE e^{\langle \sqrt{2h} Z_{x}, y\rangle }\Bigr)^{-1} f_{i}(y)dy=0
       \end{aligned}
       \label{eq:lem5_15}
   \end{equation}
   for $i=0,1,2,3$. By \cref{eq:lem5_15}, we have 
   \begin{equation}
       \begin{aligned}
           I_{12}=0.
       \end{aligned}
       \label{eq:lem5_16}
   \end{equation}
   Using \cref{eq:lem5_8,eq:lem5_9,eq:lem5_12,eq:lem5_16}, we have 
   \begin{equation}
       \begin{aligned}
           I_{1}= \biggl(1+O(1) \frac{x^{6}}{n}\biggr) \IP(\lvert D Z\rvert\geq x).
       \end{aligned}
       \label{eq:lem5_17}
   \end{equation}
   Combining \cref{eq:lem4_15,eq:lem5_6,eq:lem5_17}, we obtain
   \begin{equation}
       \begin{aligned}
           I=\biggl(1+O(1) \frac{x^{6}}{n}\biggr) \IP(\lvert D Z\rvert\geq x).
       \end{aligned}
       \label{eq:lem5_18}
   \end{equation}
   
Finally, we consider $II$ in \eq{eq:prop1}.
Because $Z_{x}$ is symmetric with respect to $0$, we have $ \IE e^{\langle \sqrt{2h} Z_x, y\rangle }=
m(\lvert y\rvert)$ for some function $m(\cdot): \mathbb{R}^+\to \mathbb{R}$.
 Let $e_{1}=(1,0,\ldots,0)^{T}\in \IR^{d}$, then by symmetry,
\begin{equation}
    \begin{aligned}
         m(a)&= \frac{\kappa}{(\sqrt{2\pi})^{d}}\int_{\lvert z\rvert\leq z_{0}} e^{\langle \sqrt{2h} z, e_{1}a\rangle }
        e^{-\frac{\lvert z\rvert^{2}}{2}} dz \\&= \frac{\kappa}{(\sqrt{2\pi})^{d}} e^{ha^{2}}\int_{\lvert z\rvert\leq z_{0}} 
        e^{-\frac{\lvert z-\sqrt{2h} a e_{1}\rvert^{2}}{2}} dz \\&=  \frac{\kappa}{(\sqrt{2\pi})^{d}}e^{ha^{2}}\int_{\lvert z+
        \sqrt{2h} e_{1} a\rvert\leq z_{0}} 
        e^{-\frac{\lvert z\rvert^{2}}{2}} dz\\ &= \kappa e^{ha^{2}}
        \IP(\lvert Z+ \sqrt{2h} e_{1} a\rvert\leq z_{0}), 
    \end{aligned}
    \label{eq:prop6}
\end{equation}
where $\kappa$ is defined in \cref{f10}.
From the first expression of $m(a)$ in \eq{eq:prop6}, we determine that it is increasing and $m(a)\to \infty$ as $a\to \infty$ (recall that $h=\frac{1}{2}-\frac{1}{2x^2}>0$). Then, for $II$, we have 
\begin{equation}
    \begin{aligned}
        II&=  \IE  e^{\langle \sqrt{2h}D  Z_{x}, W\rangle } \int_{a> x}^{ } (
		m(a))^{-1} d \Bigl(\P(|D\widetilde{W}|\leq a)-\wt \Phi ( B(0,a))\Bigr).
    \end{aligned}
    \label{eq:prop3}
\end{equation}
By the integration by parts formula, we have 
\begin{equation}
    \begin{aligned}
        II&=  \IE  e^{\langle \sqrt{2h} D  Z_{x} , W\rangle }  (
		m(x))^{-1}  \biggl(\P\Bigl( \widetilde{W} \in D^{-1} B(0,x)\Bigr)-\wt \Phi \Bigl( B(0,x)\Bigr)\biggr)\\
          &\quad-  \IE  e^{\langle \sqrt{2h} D Z_{x}, W\rangle } \int_{a> x}^{ }  
          \biggl(\P\Bigl( \widetilde{W} \in D^{-1} B(0,a)\Bigr)-\wt \Phi \Bigl( B(0,a)\Bigr)\biggr) d(
        m(a))^{-1}.
    \end{aligned}
    \label{eq:prop4}
\end{equation}
Furthermore, recalling that $m(a)\uparrow \infty$ as $a\uparrow \infty$,
\begin{equation}
    \begin{aligned}
        \left|II\right|&\leq \IE  e^{\langle \sqrt{2h} D  Z_{x}, W\rangle }  (
    m(x))^{-1}  \biggl\lvert \biggl(\P\Bigl( \widetilde{W} \in D^{-1} B(0,x)\Bigr)-\wt \Phi \Bigl(
	B(0,x)\Bigr)\biggr)\biggr\rvert\\
          &\quad + \IE  e^{\langle \sqrt{2h} D  Z_{x}, W\rangle } \sup_{r> 0}  \biggl\lvert \biggl(\P\Bigl( \widetilde{W} \in D^{-1} B(0,r)\Bigr)-\wt \Phi \Bigl(
	B(0,r)\Bigr)\biggr)\biggr\rvert\int_{a> x}^{ }  
           d|(
        m(a))^{-1}|\\
&\leq 2 \IE  e^{\langle \sqrt{2h}  D  Z_{x}, W\rangle }  (
    m(x))^{-1}  
          \sup_{a\geq 0}  \biggl\lvert \biggl(\P\Bigl( \widetilde{W} \in D^{-1} B(0,a)\Bigr)-\wt \Phi \Bigl(
	B(0,a)\Bigr)\biggr)\biggr\rvert.
    \end{aligned}
    \label{eq:prop5}
\end{equation}
From \eq{f10}, $h=\frac{1}{2}-\frac{1}{2x^{2}}$ and $z_0=3x$, we have
\begin{equation}
    \begin{aligned}
		\IE e^{h |D Z_{x}|^{2}}&= \frac{\kappa}{(\sqrt{2\pi})^{d}} \int_{\lvert z\rvert \leq z_{0}}^{ }
		e^{-\frac{z^{T} (I-2h Q)z}{2}} d z\\
							   &= \det(I-2hQ)^{-1/2}\frac{\kappa}{(\sqrt{2\pi})^{d}} \int_{\lvert
							   z^{T} (I-2hQ)^{-1} z\rvert \leq z_{0}^{2}}^{ }
							   e^{-\frac{z^{T} z}{2}} d z \\
							   &= \kappa \det(I-2hQ)^{-1/2} \IP\Bigl(Z^{T}(I-2h Q)^{-1}Z\leq
							   z_{0}^{2}\Bigr).
    \end{aligned}
    \label{eq:prop7}
\end{equation}
Recalling that $q_{i}$, $i=1,2,\ldots,d$ are the diagonal values of $Q$ and combining
\cref{eq:prop5,eq:prop6,eq:prop7}, we have
\begin{equation}
    \begin{aligned}
		|II|&\leq 2 \sup_{a\geq 0}|\P(|D \widetilde{W}|\leq a)-\wt \Phi (B(0,a))| \frac{\IE
			e^{\langle \sqrt{2h}D Z_x,
			W\rangle}e^{-hx^2}}{\kappa \IP(\lvert Z+ \sqrt{2h} e_{1} x\rvert\leq z_{0})}
		\\&\leq 2 \sup_{a\geq 0}|\P(|D \widetilde{W}|\leq a)-\wt \Phi (B(0,a))|\frac{\IE
			e^{\langle \sqrt{2h}D Z_x,
			W\rangle }}{ \IE e^{h |DZ_x|^{2} }} \frac{\Bigl(\prod_{i=1}^{d}(1-2h q_{i})^{-1/2}\Bigr)e^{-hx^{2}}}{\IP(\lvert Z+ \sqrt{2h} e_{1} x\rvert\leq z_{0})
        }  .
    \end{aligned}
    \label{eq:prop8}
\end{equation}
Recalling that $z_{0}=3x$, $x> 1$ and $\frac{1}{2}> h=\frac{1}{2}-\frac{1}{2x^2}> 0$, we have
\begin{equation}
    \begin{aligned}
        \frac{1}{\IP(\lvert Z+ \sqrt{2h} e_{1} x\rvert\leq z_{0})
        }\leq \P(\lvert Z\rvert\leq 2x)^{-1} = O(1).
    \end{aligned}
    \label{eq:prop9}
\end{equation}
By the definition of $h$ and by \cref{eq:prop8,eq:prop9}, 
\begin{equation}
    \begin{aligned}
        II= O(1)  \sup_{a
		\geq 0}|\P(|{D \wt W}|\leq a)-\wt \Phi ( B(0,a))|
      \frac{\IE e^{\langle \sqrt{2h} D Z_x ,
        W\rangle }}{ \IE e^{h|D Z_x|^{2} }} \Biggl(\prod_{i=1}^{d}(1-2h
	q_{i})^{-1/2}\Biggr)e^{-x^{2}/2}. 
    \end{aligned}
    \label{eq:prop10}
\end{equation}
%Taking expectation w.r.t. $Z_{x}$ in \eq{eq:prop11}, and using the
% symmetry of $Z_x$ (i.e., $\mathcal{L}(Z_x)=\mathcal{L}(-Z_x)$), we get
%\begin{equation}
%    \begin{aligned}
%        \frac{\IE e^{\langle \sqrt{2h} D Z_x ,
%        W\rangle }}{ \IE e^{h |D Z_x|^{2} }}=1+O(1) \frac{x^{6}}{n}.
%    \end{aligned}
%    \label{eq:prop12}
%\end{equation}
% Combining \cref{eq:prop2,eq:prop10,eq:prop11}, we finish the proof.
% To prove \eq{eq:prop11}, we need the following lemma.
% \begin{lemma}\label{l1}
%     There exists constants C and $\varepsilon$ such that for $|a|\leq \varepsilon
%     \sqrt{n}$,   
% \begin{equation}
%     \begin{aligned}
%       &\quad \biggl\lvert \E e^{a\cdot W}-
%           \exp\biggl(\frac{|a|^2}{2}\biggr)\biggl(1+\frac{\IE \{(a\cdot
%            X_{1})^{3}\}}{6\sqrt{n}}\biggr)\biggr\rvert\\&\leq C\biggl(\frac{1}{n}  \lvert a\rvert^{4} +\frac{1}{n}  \lvert a\rvert^{6}\biggr) \exp
%             \Bigl\{  \frac{\lvert a\rvert^{2}}{2}
%                 +
%             \frac{C\lvert a\rvert^{3}}{\sqrt{n}}    \Bigr\}
%     \end{aligned}
%     \label{eq:lem3_16}
% \end{equation}
% \end{lemma}
By \cref{eq:prop10}, \cref{eq:prop11}, \cref{f34} and recalling that $1<x\leq \varepsilon
 n^{1/6}$, we have
 \begin{equation}
     \begin{aligned}
         II= O(1)  \sup_{a
			 \geq 0}|\P(|D \widetilde{W}|\leq a)-\wt \Phi ( B(0,a))| \biggl(\prod_{i=1}^{d}(1-2h
	q_{i})^{-1/2}\biggr)e^{-x^{2}/2}.
     \end{aligned}
     \label{eq:lem5_19}
 \end{equation}
 Suppose that $q_{i}$, $i=1,2,\ldots,d$ take $s$ different values, which means that 
 there exist $1=\lambda_{1}> \lambda_{2}>\ldots> \lambda_{s}>0$ and positive integers
 $v_{1},v_{2},\ldots,v_{s}$ such that 
 \begin{equation}
	 \begin{aligned}
		 q_{i}=\lambda_{j} \mbox{ for $v_{j-1}+1\leq i\leq v_{j} $ and $1\leq j\leq s$}  ,
	 \end{aligned}
	 \label{eq:lem7_10}
 \end{equation}
 where $v_{0}=0$. Recalling the definition of $h$, we then have 
 \begin{equation}
	 \begin{aligned}
		\prod_{i=1}^{d}(1-2h
	q_{i})^{-1/2} =\prod_{i=1}^{s}(1-2h
	\lambda_{i})^{-v_{i}/2}=\prod_{i=1}^{s}\biggl(1-\lambda_{i}+\frac{\lambda_{i}}{x^{2}}
			\biggr)^{-v_{i}/2}.
	 \end{aligned}
	 \label{eq:lem5_21}
 \end{equation}
 Let $p= \min \{1\leq i\leq s, (1-\lambda_{i})x^{2}/\lambda_{i}> 1\}$ (with $\min \{\emptyset\}:=s+1, \prod_{i=s+1}^{s}:=1$) and $r= \sum^{p-1}_{i=1} v_{i}\leq
 d$. We then have that \cref{eq:lem5_21} is smaller than or equal to
 \begin{equation}
	 \begin{aligned}
		 \prod_{i=1}^{p-1}\left(\frac{\lambda_{i}}{x^{2}}
			 \right)^{-v_{i}/2} \prod_{j=p}^{s}(1-\lambda_{j}
			 )^{-v_{j}/2}\leq 2^{d/2} \prod_{j=p}^{s}(1-\lambda_{j}
		)^{-v_{j}/2} x^{r},
	 \end{aligned}
	 \label{eq:lem5_22}
 \end{equation}
 where we used the fact that $1/2\leq\lambda_{i}\leq 1$ for $i\leq p-1$. 

The following lemma, proved in \cref{s5}, gives a lower bound for the tail probability of a sum of weighted chi-square
random variables. Denote by $\chi^{2}_{v}$ a chi-square random variable with $v$ degrees of
freedom.
\begin{lemma}
	Let $1=\lambda_{1}>\lambda_{2}>\ldots>\lambda_{s}> 0$ be a sequence of constants, and let
	$v_{1},v_{2},\ldots,v_{s}$ be a sequence of positive integers such that $ \sum^{s}_{i=1}
	v_{s}=d$.  Suppose $\{\chi^2_{v_1},\dots, \chi^2_{v_s}\}$ are independent.
	For any $x> 1$, we have
	\begin{equation}
		\begin{aligned}
			\P\biggl( \sum_{i=1}^{s} \lambda_{i} \chi^{2}_{v_{i}}\geq x^{2}\biggr)\geq
			C_{d} \biggl[\prod_{i=p}^{s}
		(1-\lambda_{i})^{-\frac{v_{i}}{2}}\biggr] x^{r-2} e^{-\frac{x^{2}}{2}}, 
		\end{aligned}
		\label{eq:lem7_1}
	\end{equation}
	where $C_d$ is a positive constant depending only on $d$, $p= \min \{1\leq i\leq s, (1-\lambda_{i})x^{2}/\lambda_{i}> 1\}$ (with $\min \{\emptyset\}:=s+1, \prod_{i=s+1}^{s}:=1$) and $r= \sum^{p-1}_{i=1} v_{i}\leq
	d$.
	\label{lem:7}
\end{lemma}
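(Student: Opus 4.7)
The plan is to apply a Gaussian change of measure adapted to the splitting $i<p$ versus $i\geq p$, which naturally produces the prefactor $\prod_{i\geq p}(1-\lambda_{i})^{-v_{i}/2}$. Writing $\sum_{i}\lambda_{i}\chi^{2}_{v_{i}}=|D Z|^{2}$ with $Z\sim N(0,I_{d})$ and $D=\diag(\sqrt{\mu_{1}},\ldots,\sqrt{\mu_{d}})$ (the $\mu_{j}$ repeating each $\lambda_{i}$ with multiplicity $v_{i}$), I partition the coordinates into $A=\{j:\mu_{j}\geq x^{2}/(x^{2}+1)\}$ of cardinality $r$ and $B=\{j:\mu_{j}<x^{2}/(x^{2}+1)\}$ of cardinality $d-r$; these correspond exactly to the indices $i<p$ and $i\geq p$. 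Introduce a tilted Gaussian measure $\widetilde{\IP}$ under which $Z_{j}\sim N(0,1)$ for $j\in A$ and $Z_{j}\sim N(0,(1-\mu_{j})^{-1})$ for $j\in B$, all independent. A direct density computation gives
\[
\frac{d\IP}{d\widetilde{\IP}}(z)=\prod_{j\in B}(1-\mu_{j})^{-1/2}e^{-\mu_{j}z_{j}^{2}/2}=\prod_{i\geq p}(1-\lambda_{i})^{-v_{i}/2}\,e^{-Y_{B}/2},
\]
where $Y_{B}=\sum_{j\in B}\mu_{j}Z_{j}^{2}$, so that $\IP(Y\geq x^{2})=\prod_{i\geq p}(1-\lambda_{i})^{-v_{i}/2}\,\IE_{\widetilde{\IP}}[1_{Y\geq x^{2}}\,e^{-Y_{B}/2}]$.

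Next I lower-bound the right-hand expectation. Under $\widetilde{\IP}$, the block $Y_{A}:=\sum_{j\in A}\mu_{j}Z_{j}^{2}$ has the same distribution as under $\IP$ and is independent of $Y_{B}=\sum_{i\geq p}(\lambda_{i}/(1-\lambda_{i}))\chi^{2}_{v_{i}}$. Restricting the expectation to $\{Y_{B}\leq x^{2}/2\}$ forces $x^{2}-Y_{B}\in[x^{2}/2,x^{2}]$, so that $(x^{2}-Y_{B})^{r/2-1}\geq C_{r}x^{r-2}$. Combined with $Y_{A}\geq\frac{x^{2}}{x^{2}+1}\chi^{2}_{r}$ and the standard chi-square tail $\IP(\chi^{2}_{r}\geq t)\geq c_{r}t^{r/2-1}e^{-t/2}$ (valid once $t$ is bounded below by a constant depending on $d$, which holds for $x$ sufficiently large), this yields $\IP(Y_{A}\geq x^{2}-Y_{B})\geq C_{r}x^{r-2}e^{-(x^{2}-Y_{B})/2}$ on the event. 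The exponentials $e^{-(x^{2}-Y_{B})/2}\cdot e^{-Y_{B}/2}$ then telescope to $e^{-x^{2}/2}$, whence
\[
\IE_{\widetilde{\IP}}\bigl[1_{Y\geq x^{2}}\,e^{-Y_{B}/2}\bigr]\geq C_{r}\,x^{r-2}e^{-x^{2}/2}\cdot\widetilde{\IP}(Y_{B}\leq x^{2}/2).
\]

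The key observation is that by the very definition of $p$, $\lambda_{i}/(1-\lambda_{i})<x^{2}$ for every $i\geq p$, so under $\widetilde{\IP}$ the pointwise bound $Y_{B}\leq x^{2}\sum_{i\geq p}\chi^{2}_{v_{i}}=x^{2}\chi^{2}_{d-r}$ holds, giving
\[
\widetilde{\IP}(Y_{B}\leq x^{2}/2)\geq\IP(\chi^{2}_{d-r}\leq 1/2)=:c_{d}>0,
\]
a positive constant depending only on $d$. Chaining the three inequalities produces the claimed lower bound with constant $C_{d}$ depending only on $d$. For $x$ below the threshold at which the chi-square tail estimate applies, I verify the inequality directly: both sides are bounded above and below by constants depending only on $d$ (using $\lambda_{i}\geq x^{2}/(x^{2}+1)$ on $i<p$ and $1-\lambda_{i}\geq 1/(x^{2}+1)$ on $i\geq p$), so $C_{d}$ can be chosen small enough. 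I expect the most delicate point to be the uniform chi-square lower tail in the degenerate case $r=1$, where the bound $t^{-1/2}e^{-t/2}$ has a singularity near the origin and forces the lower truncation on $t$ that triggers the separate small-$x$ analysis.
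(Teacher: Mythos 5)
Your proof is correct, and it takes a genuinely different route from the paper's. The paper peels off the ``small-eigenvalue'' blocks $i=p,p+1,\dots,s$ one at a time via a convolution lower bound $\P(Y+\lambda_i\chi^2_{v_i}\geq a^2)\geq\frac{1}{\lambda_i}\int_0^{a^2/2}f_{v_i}(y/\lambda_i)\P(Y\geq a^2-y)\,dy$, iterating $s-p+1$ times; at each step the factor $(1-\lambda_i)^{-v_i/2}$ emerges from a gamma integral $\int y^{v_i/2-1}e^{-y(\lambda_i^{-1}-1)/2}\,dy$. You instead tilt all $j\in B$ coordinates at once via the Gaussian change of measure $N(0,1)\to N(0,(1-\mu_j)^{-1})$, so the entire prefactor $\prod_{i\geq p}(1-\lambda_i)^{-v_i/2}$ drops out directly as the normalizing constant of $d\P/d\widetilde{\P}$, and the telescoping $e^{-Y_B/2}\cdot e^{-(x^2-Y_B)/2}=e^{-x^2/2}$ replaces the paper's bookkeeping of exponents across iterations. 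Both proofs rest on the same chi-square tail estimate $\P(\chi^2_r\geq t)\geq c_r\,t^{r/2-1}e^{-t/2}$ and the same observation that $\lambda_i/(1-\lambda_i)<x^2$ for $i\geq p$. Your single change of measure is shorter and makes the origin of the constant more transparent. One small remark: the separate ``small-$x$'' case you flag at the end is actually vacuous here, since on $\{Y_B\leq x^2/2\}$ the argument fed to the chi-square tail is $t'=(x^2+1)(x^2-Y_B)/x^2\geq(x^2+1)/2>1$ for every $x>1$, so the tail estimate applies uniformly; likewise $r\geq 1$ always holds because $\lambda_1=1$ forces $p\geq 2$, so the $r=0$ degeneracy never arises.
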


Using \cref{eq:lem5_19}, \cref{eq:lem5_21}, \cref{eq:lem5_22} and applying \cref{lem:7}, we have
 \begin{equation}
	 \begin{aligned}
		 II&= O(1) x^{2} \P\biggl( \sum^{s}_{i=1} \lambda_{s} \chi_{v_{i}}^{2}\geq x^{2}\biggr) \sup_{a
		 \geq 0}|\P(|D\widetilde{W}|\leq a)-\wt \Phi (B(0,a))| 
		 \\&= O(1) x^{2} \P(|D Z|\geq x) \sup_{a
		 \geq 0}|\P(|D\widetilde{ W}|\leq a)-\wt \Phi ( B(0,a))|.
	 \end{aligned}
	 \label{eq:lem5_23}
 \end{equation}
 Now, combining \cref{eq:prop1,eq:lem5_18,eq:lem5_23}, we complete the proof of \cref{p1}.

% We are left to prove Lemma \ref{l1}.

% Let $M_{r}=\IE \lvert X_{1}\rvert^{r} e^{t_{0} \lvert X_{1}\rvert/2}$ for $r\geq 1$.

\end{proof}

\section{Proof of \texorpdfstring{\cref{t1}}{Theorem 1.1}}\label{s4}

\cref{t1} immediately follows from \cref{p1,p3} given as follows:
\begin{proposition}\label{p3}
Under the conditions of \cref{t1},
let $\wt W$ be as in \eq{f7} and $\wt \Phi$ be as in \eq{eq:2_1}.
For $d\geq 5$ and $1<x\leq \varepsilon n^{1/6}$ with a sufficiently small $\varepsilon>0$, we have
\ben{\label{f14}
\sup_{a\geq 0}\abs{\P(|D \wt{W}|\leq  a)-\wt \Phi (B(0,a))}\leq \frac{C x^3}{\det (Q^{1/2})n}.
}
For $1\leq d\leq 4$ and $1<x\leq \varepsilon n^{1/6}$ with a sufficiently small $\varepsilon>0$, we have
\ben{\label{f4.5}
	\sup_{a\geq 0}\abs{\P(|D \wt{W}|\leq  a)-\wt \Phi (B(0,a))}\leq \frac{Cx}{\det(Q^{1/2})n^{\frac{d}{d+1}}}.
	%\frac{C}{\det(Q^{1/2})}\left( \frac{x}{n^{\frac{d}{d+1}}}  +\frac{x^{\frac{3}{2}}}{n}\right).
}
\end{proposition}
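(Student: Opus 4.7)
The plan is to work conditionally on $Z_x$: I interpret the left-hand side of the desired bound as a $Z_x$-average (with weights $\hat G^n(\sqrt{2h}DZ_x/\sqrt n)/\E\hat G^n(\sqrt{2h}DZ_x/\sqrt n)$, which form a probability density on $\{|Z_x|\leq 3x\}$) of a conditional two-term Edgeworth expansion error. Combining \eqref{f13} and \eqref{eq:2_1},
\begin{equation*}
\P(|D\wt W|\leq a)-\wt\Phi(B(0,a))=\frac{\E\bigl[\hat G^n(\sqrt{2h}DZ_x/\sqrt n)\,\Delta(Z_x,a)\bigr]}{\E\,\hat G^n(\sqrt{2h}DZ_x/\sqrt n)},
\end{equation*}
where, writing $\xi_i:=\wt X_i-\wt\mu_1$ so that $\E^{Z_x}\xi_i=0$ and $\Cov^{Z_x}(\xi_i)=\wt\Sigma$,
\begin{equation*}
\Delta(Z_x,a)=\P^{Z_x}\!\left(\frac{1}{\sqrt n}\sum_{i=1}^n\xi_i\in B(-\sqrt n\,\wt\mu_1,a)\right)-\int_{B(-\sqrt n\,\wt\mu_1,a)}\wt\omega(y)\,dy.
\end{equation*}
Thus it suffices to bound $|\Delta(Z_x,a)|$ by the RHS of \eqref{f14} (respectively \eqref{f4.5}) uniformly over $|Z_x|\leq 3x$ and $a\geq 0$.

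I would then invoke \cref{l3} for $d\geq 5$ and \cref{l11} for $1\leq d\leq 4$, applied to the conditionally i.i.d.\ centered vectors $\xi_1,\dots,\xi_n$ with covariance $\wt\Sigma$, evaluated on the shifted Euclidean ball $B(-\sqrt n\,\wt\mu_1,a)$. These should yield bounds of the form
\begin{equation*}
|\Delta(Z_x,a)|\leq \frac{C\,(1+|\sqrt n\,\wt\mu_1|^3)}{\det(\wt\Sigma^{1/2})\,n}\ \ (d\geq 5),\qquad |\Delta(Z_x,a)|\leq \frac{C\,(1+|\sqrt n\,\wt\mu_1|)}{\det(\wt\Sigma^{1/2})\,n^{d/(d+1)}}\ \ (1\leq d\leq 4),
\end{equation*}
with constants depending only on $d,t_0,c_0$. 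All required inputs come from \cref{l4}: $|\sqrt n\,\wt\mu_1|=O(x)$ by \eqref{f25}; $\det(\wt\Sigma^{1/2})=\det(Q^{1/2})(1+O(x^2/n))^{1/2}$ by \eqref{eq:lem2_9}; and $\E^{Z_x}|\xi_i|^k$ is uniformly bounded for the small $k$ needed by \eqref{f26}--\eqref{f30}. Substituting these estimates and averaging over $Z_x$ (the weights integrate to $1$) gives the two desired bounds \eqref{f14} and \eqref{f4.5}.

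The main obstacle is verifying \cref{l3} and \cref{l11} with the sharp polynomial dependence on the shift $|\sqrt n\,\wt\mu_1|$: cubic in the $d\geq 5$ case, which produces the $x^3$ in \eqref{f14}, and linear in the $d\leq 4$ case, which produces the $x$ in \eqref{f4.5}. This requires tracking off-center Euclidean balls through the Fourier/smoothing inequalities underlying the G\"otze--Friedrich and Esseen techniques, without losing the sharp dimension-dependent rate. A subsidiary point is that the Cram\'er-type smoothness hypothesis must hold uniformly in $Z_x$ on $\{|Z_x|\leq 3x\}$; since the tilting in \eqref{eq:2_2} is by a vector of norm $O(x/\sqrt n)=o(1)$, the hypothesis $\E e^{t_0|X_1|}\leq c_0$ yields, after a small adjustment of $t_0$, a uniform exponential-moment bound on $\xi_i$ depending only on $d,t_0,c_0$, so the constants in \cref{l3} and \cref{l11} are indeed independent of $Z_x$.
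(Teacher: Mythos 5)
Your proposal is correct and matches the paper's proof: the same decomposition into a $Z_x$-weighted average of conditional two-term Edgeworth errors (cf.\ \cref{f15}), the same applications of \cref{l3} for $d\geq 5$ and \cref{l11} for $1\leq d\leq 4$ with the shift $b=-\sqrt{n}\,\wt\mu_1$ (and, in the $d\leq 4$ case, the additional whitening $Y_i=\wt\Sigma^{-1/2}(\wt X_i-\wt\mu_1)$, $\Sigma_1=\wt\Sigma$), and the same estimates from \cref{l4}. One minor inaccuracy: \cref{l3} and \cref{l11} require only finite fourth moments rather than any Cram\'er-type smoothness hypothesis, so your subsidiary concern is unnecessary --- the exponential moment condition is used only through \cref{f30} to give a $Z_x$-uniform bound on $\E^{Z_x}|\wt X_1-\wt\mu_1|^4$.
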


\begin{proof}[Proof of \cref{p3}]
We first prove for the case $d\geq 5$.
We rely crucially on the following lemma:
% Note that, from \eq{f7}, the characteristic function of $\wt{W}$ can be expressed as
% \ben{\label{f11}
% Ee^{it\cdot \widetilde W}=\frac{E e^{(\sqrt{2h}Z_x+it)\cdot W}}{E e^{\sqrt{2h}Z_x\cdot W}}.
% }
% When the expectation is with respect to both $Z_x$ and $W$, we always compute it by first conditioning on $Z_x$. 
% We rewrite the characteristic function of $\wt{W}$ (cf. \eq{f11}) as
% \be{
% E e^{it\cdot \wt{W}}=\frac{1}{E \hat G^n (\sqrt{2h}Z_x/\sqrt{n}) }E \left[ \hat G^n (\sqrt{2h}Z_x/\sqrt{n}) \frac{\hat G^n ((\sqrt{2h}Z_x+it)/\sqrt{n})}{\hat G^n (\sqrt{2h}Z_x/\sqrt{n})} \right].
% }
% This implies that $\wt W$ is a mixture (depending on the value of $Z_x$) of sums of i.i.d.\ random variables $\frac{1}{\sqrt{n}}\sum_{i=1}^n \wt X_i$, where each $\wt X_i$ has the characteristic function $\frac{\hat G ((\sqrt{2h}Z_x+it)/\sqrt{n})}{\hat G (\sqrt{2h}Z_x/\sqrt{n})}$.
% Therefore,
% \ben{\label{f13}
% P(|\wt W|\leq a)= \frac{1}{E \hat G^n (\sqrt{2h}Z_x/\sqrt{n}) } E \left[\hat G^n (\sqrt{2h}Z_x/\sqrt{n}) P^{Z_x} \left( \left|\frac{\sum_{i=1}^n \wt X_i}{\sqrt{n}} \right| \leq a \right)  \right],
% }
% where $P^{Z_x}(\cdot)$ denotes the conditional probability given $Z_x$.
% We rely crucially on the following result from GZ14 for a two-term Edgeworth expansion for $P^{Z_x} \left( \left|\frac{\sum_{i=1}^n \wt X_i}{\sqrt{n}} \right| \leq a \right)$.
\begin{lemma}[Corollary 2.3 of \cite{GtzeFriedrich2014}]\label{l3}
Let $Y_1, \dots, Y_n$ be i.i.d.\ random vectors in $\mathbb{R}^d$ with mean 0, positive definite covariance matrix $\Sigma$, and finite fourth moments. 
Let $\sigma^2$ denote the summation of the eigenvalues of $\Sigma$.
Let $\phi$ denote the standard normal density in $\mathbb{R}^d$, and, for $y, u\in \mathbb{R}^d$, let
\ba{
p(y)=\phi(\Sigma^{-1/2}y)/\sqrt{\det \Sigma},
}
and
\be{
p'''(y) u^3=p(y)\left( 3\langle \Sigma^{-1}u, u\rangle\langle \Sigma^{-1}y, u\rangle-\langle \Sigma^{-1}y, u\rangle^3 \right) .
}
%Let $B(b, a)$ denote the Euclidean ball with center $b$ and radius $a$. 
Then, 
\besn{\label{f12}
&\sup_{a\geq 0} \left| \P\left(\frac{\sum_{i=1}^n Y_i}{\sqrt{n}}\in B(b,a) \right)- \int_{B(b,a)} \left( p(y)+\frac{1}{6\sqrt{n}} \IE p'''(y) Y_1^3 \right)  dy\right| \\
    \leq& \frac{C_d \sigma^d}{n\det(\Sigma^{1/2})} \biggl(1+ \Bigl\lvert\frac{b}{\sigma} \Bigr\rvert^3\biggr)\IE |\Sigma^{-1/2}Y_1|^4,
}
where $C_d$ is a constant depending only on $d$.
\end{lemma}

% To apply Lemma \ref{l3}, let $\wt \mu_1=\wt \mu_i=E^{Z_x} \wt X_i$, $\wt \Sigma=\Cov^{Z_x} (X_1) $ (will be shown to be positive definite below in Lemma \ref{l4}) and rewrite
% \be{
% P^{Z_x} \left( \left|\frac{\sum_{i=1}^n \wt X_i}{\sqrt{n}} \right| \leq a \right)=P^{Z_x} \left(\frac{\sum_{i=1}^n (\wt X_i-\wt \mu_i)}{\sqrt{n}}\in B(-\sqrt{n}\wt \mu_1, a) \right).
% }
% Let 
% \be{
% \wt p(y)=\phi(\wt \Sigma^{-1/2}y)/\sqrt{\det \wt \Sigma}
% }
% and 
% \be{
% \wt p'''(y) u^3=\wt p(y)\left( 3\langle \wt \Sigma^{-1/2}u, u\rangle\langle \wt \Sigma^{-1/2}y, u\rangle-\langle \wt \Sigma^{-1/2}y, u\rangle^3 \right).
% }

Using \cref{f13,eq:2_1}, we rewrite the target $\P(|D\wt{W}|\leq a)- \wt \Phi (B(0,a))$ as
\begin{align}
&\quad \P\Bigl(|D \wt{W}|\leq a\Bigr)-\wt \Phi \left(B(0,a)\right)\nonumber \\
&= \frac{1}{\E \hat G^n (\sqrt{2h}DZ_x/\sqrt{n}) } \E \left\{\hat G^n (\sqrt{2h}DZ_x/\sqrt{n}) \left[
\P^{Z_x} \left( \left|\frac{\sum_{i=1}^n \wt X_i}{\sqrt{n}} \right| \leq a \right)  -\int_{B(-\sqrt{n}\wt \mu_1,a)} \wt \omega(y)  dy \right]  \right\} ,
\label{f15}
\end{align}
where $\wt \omega(y)$ is defined in \eq{f28}.
%\be{
%\wt \omega(y)=\wt p(y)+\frac{1}{6\sqrt{n}} \E\wt p'''(y) (\wt X_1-\wt \mu_1)^3.
%}
We bound \cref{f15} uniformly in $a\geq 0$.
From \eq{f12} with $Y_{i}=\wt X_{i}- \wt \mu_{1}$, $b=-\sqrt{n}\wt\mu_{1}$, and
$\Sigma=\wt \Sigma$, we have
\besn{\label{f16}
&\sup_{a\geq 0} \left| \P^{Z_x} \left( \left|\frac{\sum_{i=1}^n \wt X_i}{\sqrt{n}} \right| \leq a \right)- \int_{B(-\sqrt{n}\wt \mu_1,a)} \wt \omega(y)  dy\right| \\
    \leq& \frac{C_d \wt \sigma^d}{n\det(\wt \Sigma^{1/2})} \biggl(1+\Bigl\lvert\frac{\sqrt{n} \wt \mu_1}{\wt \sigma}\Bigr\rvert^3\biggr) \E^{Z_x}|\wt \Sigma^{-1/2}(\wt X_1-\wt \mu_1)|^4,
}
where $\wt \sigma^2$ denotes the summation of the eigenvalues of $\wt \Sigma$. By
\cref{l4} and recalling that $1<x\leq \varepsilon n^{1/6}$, we have 
\begin{equation}
    \begin{aligned}
        \frac{C_d \wt \sigma^d}{n\det(\wt \Sigma^{1/2})} \biggl(1+\Bigl\lvert\frac{\sqrt{n} \wt \mu_1}{\wt \sigma}\Bigr\rvert^3\biggr) \E^{Z_x}|\wt \Sigma^{-1/2}(\wt
        X_1-\wt \mu_1)|^4 \leq   \frac{C}{n \det (Q^{1/2})}x^{3} \IE |X_{1}|^{4}\leq \frac{C
        x^{3}}{n\det (Q^{1/2})}. 
    \end{aligned}
    \label{eq:lem5_20}
\end{equation}
By \cref{f15,f16,eq:lem5_20} we complete the proof of \cref{f14}.

% {\blue {\bf Do not need $\downarrow$}}\\
% From \eq{f16} and Lemma \ref{l4}, we can bound $II$ in \eq{f15} by (\blue{add details})
% \ben{\label{f18}
% |II|\leq \frac{C(1+x^3)}{n}.
% }
% Finally, we turn to bounding $I$ in \eq{f15}. Note that
% \be{
% \int_{B(-\sqrt{n}\wt \mu_1,a)} \wt \omega(y)  dy=\int_{B(0,a)} \wt \omega(y-\sqrt{n}\wt \mu_1)  dy.
% }
% The crucial step is to have an expansion of $\wt \omega (y-\sqrt{n} \wt \mu_1)$ which is accurate up to a remainder term of the order $O(1/n)$. This is presented in the next lemma.
% \begin{lemma}\label{l5}
% For $x/n^{1/6}$ sufficiently small, we have
% \ben{\label{f17}
% \wt \omega(y-\sqrt{n} \wt \mu_1)=??.
% }
% \end{lemma}
% From \eq{eq:lem3_16} and \eq{f17}, we have
% \be{
% E^{Z_x} \hat G^n(\frac{\sqrt{2h} Z_x}{\sqrt{n}}) \wt \omega(y)=??.
% }
% Therefore, $I$ in \eq{f15} can be written as
% \ben{\label{f19}
% I=I_1+I_2,
% }
% where 
% \be{
% I_1=??-P(|\wt{Z}|\leq  a)
% }
% and 
% \be{
% I_2=??.
% }
% $I_2$ can be readily bounded by \blue{add details}
% \ben{\label{f21}
% |I_2|\leq \frac{C}{n}.
% }
% To bound $I_1$, we observe the following
% \begin{lemma}\label{l6}
% All the terms of the order $\asymp \frac{1}{\sqrt{n}}$ are antisymmetric functions of $y$.
% \end{lemma}
% From Lemma \ref{l6}, we have \blue{add details}
% \ben{\label{f22}
% |I_1|\leq \frac{C(1+x^6)}{n}.
% }
% From \eq{f15}, \eq{f18}, \eq{f19}, \eq{f21} and \eq{f22}, we obtain the desired result \eq{f14}.\\
% {\blue  {\bf Do not need $\uparrow$}}\\
The result \eq{f4.5} for $1\leq d\leq 4$ is proved by the same argument as for $d\geq 5$, except
that instead of \cref{l3}, we use \cref{l11} below with $Y_{i}=\wt\Sigma^{-1/2}( \wt
X_{i}-\wt\mu_{1})$, $\Sigma_{1}=\wt \Sigma$, and
$b=-\sqrt{n} \wt
\mu_{1}$. From \eq{f33}, for $1<x\leq \varepsilon n^{1/6}$ with a sufficiently small $\varepsilon$, the largest eigenvalue of $\wt \Sigma$ is smaller than 4.
Using \cref{l11} we have 
\begin{equation}
	\begin{aligned}
	&\sup_{a\geq 0} \left| \P^{Z_x} \left( \left|\frac{\sum_{i=1}^n \wt X_i}{\sqrt{n}} \right| \leq a \right)- \int_{B(-\sqrt{n}\wt \mu_1,a)} \wt \omega(y)  dy\right| \\
    \leq& \frac{C_d }{\det(\wt \Sigma^{1/2})}  \biggl(\frac{1+|
	\sqrt{n}\wt\Sigma^{-1/2} \wt \mu_{1}|}{n^{\frac{d}{d+1}}}+\frac{|\sqrt{n}\wt\Sigma^{-1/2} \wt \mu_{1}|^{\frac{d-1}{2}}}{n}\biggr)\bigl(\IE |\wt\Sigma^{-1/2}( \wt
X_{i}-\wt\mu_{1})|^4\bigr)^{3/2}.
	\end{aligned}
	\label{eq:lem3_19}
\end{equation}
Similar to \eq{eq:lem5_20} and using $d\leq 4$,
%Similarly,  by
%\cref{l4} and recalling that $1<x\leq \varepsilon n^{1/6}$, 
we have
\begin{equation}
	\begin{aligned}
		&\quad \frac{C_d }{\det(\wt \Sigma^{1/2})}  \biggl(\frac{1+|
	\sqrt{n}\wt\Sigma^{-1/2} \wt \mu_{1}|}{n^{\frac{d}{d+1}}}+\frac{|\sqrt{n}\wt\Sigma^{-1/2} \wt \mu_{1}|^{\frac{d-1}{2}}}{n}\biggr)\bigl(\IE |\wt\Sigma^{-1/2}( \wt
	X_{i}-\wt\mu_{1})|^4\bigr)^{3/2}\\&\leq  \frac{C}{\det(Q^{1/2})}\biggl(\frac{1+x
}{n^{\frac{d}{d+1}}}+\frac{x^{\frac{3}{2}}}{n}\biggr)\bigl(\IE |X_{1}|^4\bigr)^{3/2}\leq \frac{Cx}{\det(Q^{1/2})n^{\frac{d}{d+1}}}.
	\end{aligned}
	\label{eq:lem3_20}
\end{equation}
From \cref{f15,eq:lem3_19,eq:lem3_20}, we complete the proof of \cref{f4.5}.
% In fact, by \cref{eq:lem2_9} and the fact that $\wt \lambda_{j}= O(x)$, we have 
% \begin{equation}
% 	\begin{aligned}
% 		\det(\Sigma_{1})=\det (\wt
% \Sigma)=\Bigl(1+O(\frac{x^{2}}{\sqrt{n}})\Bigr)\det(Q)=O(1) \det(Q).	
% 	\end{aligned}
% 	\label{eq:lem3_17}
% \end{equation}
% On the other hand, by \cref{f25,f24}, we have
% \begin{equation}
% 	\begin{aligned}
% 		\lvert \Sigma^{-1/2}_{1} b\rvert^{2}=\sqrt{n} \wt \mu_{1}^{t} \wt \Sigma\sqrt{n} \wt
% 		\mu_{1} = O(x).
% 	\end{aligned}
% 	\label{eq:lem3_18}
% \end{equation}
% By \cref{eq:lem3_17,eq:lem3_18} and the following \cref{l11}, we complete the proof of
% \cref{f4.5}.
\begin{lemma}\label{l11}
Le $Y_1, \dots, Y_n$ be i.i.d.\ random vectors in $\mathbb{R}^d$ with mean 0, covariance
matrix $I_{d}$, and finite fourth moments. 
Let $\phi$ denote the standard normal density in $\mathbb{R}^d$, and, for $y, u\in \mathbb{R}^d$,
let $p(y)$ and $p'''(y) u^3$ be as defined in \cref{l3} with $\Sigma=I_{d}$. Let
$\Sigma_{1}$ be a symmetric positive definite matrix
with $\norm{\Sigma_1}_{op}\leq 4$. 
%greatest eigenvalue smaller than 4.
Then, for any $b\in \mathbb{R}^d$, we have 
\bes{
&\sup_{a\geq 0} \left| \P\left(\frac{\sum_{i=1}^n Y_i}{\sqrt{n}}\in  \Sigma_{1}^{-1/2} B(b,a) \right)-
\int_{y\in\Sigma_{1}^{-1/2}B(b,a)}
\left( p(y)+\frac{1}{6\sqrt{n}} \IE p'''(y) Y_1^3 \right)  dy\right| \\
	\leq& \frac{C_d}{\det\bigl(\Sigma_{1}^{1/2}\bigr)} \biggl(\frac{1+|\Sigma_{1}^{-1/2}b|}{n^{\frac{d}{d+1}}}+\frac{|\Sigma_{1}^{-1/2}b|^{\frac{d-1}{2}}}{n}\biggr)\bigl(\IE |Y_1|^4\bigr)^{3/2},
}
where $C_d$ is a constant depending only on $d$.
\end{lemma}

% \begin{lemma}\label{l11}
% Suppose $Y_1, \dots, Y_n$ are i.i.d.\ random vectors in $\mathbb{R}^d$ with mean 0, positive definite covariance matrix $\Sigma$ and finite fourth moments. 
% Let $\sigma_1^2, \dots, \sigma_d^2$ denote the the eigenvalues of $\Sigma$.
% Suppose $0.5<\sigma_j^2<1.5, 1\leq j\leq d$.
% Let $\phi$ denote the standard normal density in $\mathbb{R}^d$, and, for $y, u\in \mathbb{R}^d$, let $p(y)$ and $p'''(y) u^3$ be as defined in \cref{l3}.
% Then,
% \bes{
% &\sup_{a\geq 0} \left| \P\left(\frac{\sum_{i=1}^n Y_i}{\sqrt{n}}\in B(b,a) \right)- \int_{B(b,a)} \left( p(y)+\frac{1}{6\sqrt{n}} \IE p'''(y) Y_1^3 \right)  dy\right| \\
% \leq& C_0 \biggl(\frac{1+|b|}{n^{\frac{d}{d+1}}}+\frac{|b|^{\frac{d-1}{2}}}{n}\biggr)\bigl(\IE |Y_1|^4\bigr)^{3/2},
% }
% where $C_0$ is a constant depending only on $d$.
% \end{lemma}
\begin{proof}[Proof of \cref{l11}]
In this proof, we denote by $C_d$ positive constants that depend only on $d$. They may differ in different expressions.

If $\Sigma_1=I_d$ and $b=0$, then \cref{l11} follows from \cite[Chapter VII, Theorem 1]{esseen1945fourier} by observing that $\int_{y\in B(0,a)} \E p'''(y) Y_1^3 dy=0$.
The proof for the general case is a straightforward modification (outlined below) of the proof of \cite[Chapter VII, Theorem 1]{esseen1945fourier} . 
%The following theorems and lemmas refer to those in \cite[Chapter VII]{esseen1945fourier}, and 
Concerning notation, we use, e.g., (Eq. 60) to denote the equation (60) in \cite[Chapter VII]{esseen1945fourier}.
To be consistent with the notation in \cite{esseen1945fourier}, in this proof, we use the symbol $\varepsilon$ to denote a different quantity from that in the rest of the paper.
Other notations used in this proof are as follows:
$x=(x_{1},x_{2},\cdots,x_{d})^{T}$, $t=(t_{1},t_{2},\cdots,t_{d})^{T}$, $r=|t|$, and $J_{d/2}(\cdot)$ denotes the Bessel function of order $d/2$.

We first give a smoothing inequality for the noncentered ellipsoid $\Sigma_1^{-1/2}B(b,a)$ (cf. \eq{eq:lem9_9} below). 
For $\varepsilon>0$, let 
$$
Q_{\varepsilon}\left(x_{1}, x_{2}, \ldots, x_{k}\right)=\left\{\begin{array}{l}
		1 \text { for } \lvert x\rvert \leq \varepsilon \\
0 \text {  for } \lvert x\rvert>\varepsilon.
\end{array}\right.
$$
It has the following Fourier transform (cf. (Eq. 43) and (Eq. 44)):
\begin{equation}
	\begin{aligned}
	q_{\varepsilon}\left(t\right)=\int_{\mathbb{R}^{d}}^{ } e^{i\langle t,x\rangle} Q_{\varepsilon}(x) dx=\left(\frac{2 \pi a}{|t|}\right)^{k / 2} J_{k / 2}(\varepsilon
	|t|).	
	\end{aligned}
	\label{eq:lem9_3}
\end{equation}
Let $$
\wt Q_{a,b}\left(x_{1}, x_{2}, \ldots, x_{k}\right)=\left\{\begin{array}{l}
		1 \text { for } \lvert \Sigma_{1}^{1/2} x-b\rvert \leq a \\
0 \text {  for } \lvert \Sigma_{1}^{1/2} x-b\rvert>a
\end{array}\right.
$$
be the indicator of the ellipsoid $\Sigma_1^{-1/2}B(b,a)$.
From \eq{eq:lem9_3}, it has the following Fourier transform:
\begin{equation*}
	\begin{aligned}
		\wt q_{a,b}(t)=\int_{\mathbb{R}^{d}}^{ } e^{i\langle t,x\rangle} \wt Q_{a,b}(x) dx=\Biggl(
			\frac{2\pi a}{|\Sigma_{1}^{-1/2} t|}\Biggr)^{d/2} J_{d/2}(a|\Sigma_{1}^{-1/2} t|)e^{i \langle t,
		\Sigma_{1}^{-1/2}b\rangle} \frac{1}{\det(\Sigma_1^{1/2})}.
	\end{aligned}
	\label{eq:lem9_1}
\end{equation*}
Now, consider the convolution function (cf. (Eq. 45)), for $0<\varepsilon<a$,
$$M\left(x\right)=\frac{\Gamma\left(1+\frac{d}{2}\right)}{\pi^{d
/ 2} \varepsilon^{d}} \int_{\mathbb{R}^{d}} \wt Q_{a,b}\left(x_{1}-\xi_{1}, \ldots, x_{d}-\xi_{d}\right)
Q_{\varepsilon}\left(\xi_{1}, \ldots, \xi_{d}\right) d \xi_{1} \ldots d \xi_{d}.$$
 Let $A(b,a,\varepsilon,\Sigma_{1})=
 \cup_{t\in \Sigma_{1}^{-1/2} B(b,a)}B(t,\varepsilon)$ and
 $$\hat{A}(b,a,\varepsilon,\Sigma_{1})=\left(\Sigma_{1}^{-1/2} B(b,a)\right)\Big\backslash\cup_{t\in
	 (\Sigma_{1}^{-1/2} B(b,a))^{c}}B(t,\varepsilon).$$ We observe that $|M(x)| \leq 1 \text {
 for all } x$ and (cf. (Eq. 46))
 $$M\left(x_{1}, x_{2}, \ldots, x_{k}\right)=\left\{\begin{array}{l}\text { 1
		 for } x\in \hat{A}(b,a,\varepsilon,\Sigma_{1}) \\ \text { 0 for }
 x\in \left(A(b,a,\varepsilon,\Sigma_{1})\right)^{c}. \\ \end{array}\right. $$
The Fourier transform of $M, m\left(t\right)$, is (cf. (Eq. 47))
\begin{equation}
	\begin{aligned}
		m(t)= \left(
			\frac{2\pi a}{|\Sigma_{1}^{-1/2} t|}\right)^{d/2} J_{d/2}\left(a|\Sigma_{1}^{-1/2} t|\right)e^{i \langle t,
			\Sigma_{1}^{-1/2}b\rangle} 2^{d/2} \Gamma\left(1+\frac{d}{2}\right) \frac{J_{d/2}(\varepsilon
	r)}{(\varepsilon r)^{d/2}}	\frac{1}{\det(\Sigma_1^{1/2})},
	\end{aligned}
	\label{eq:lem9_2}
\end{equation}
because the Fourier transform of a convolution is equal to the product of the transforms corresponding to the functions in the convolution.

Thus, replacing $a$ by $a+\varepsilon/2$ and $\varepsilon$ by $\varepsilon/4$ in \eq{eq:lem9_2},  the function (cf. (Eq. 48))
\begin{equation*}
	\begin{aligned}
		\left(
			\frac{2\pi (a+\varepsilon/2)}{|\Sigma_{1}^{-1/2} t|}\right)^{d/2}
			J_{d/2}\left((a+\varepsilon/2)|\Sigma_{1}^{-1/2} t|\right)e^{i \langle t,
			\Sigma_{1}^{-1/2}b\rangle} 2^{d/2} \Gamma\left(1+\frac{d}{2}\right) \frac{J_{d/2}(\varepsilon
	r/4)}{(\varepsilon r/4)^{d/2}}	\frac{1}{\det(\Sigma_1^{1/2})}
	\end{aligned}
	\label{eq:lem9_5}
\end{equation*}
is the Fourier transform of a function $=$
\begin{equation}
	\begin{aligned}
		=\left\{\begin{array}{l}\text { 1
				for } x\in \Sigma_{1}^{-1/2} B(b,a) \\ \text { 0 for }
		x\in \left(\Sigma_{1}^{-1/2} B(b,a+\varepsilon) \right)^{c}.\\ \end{array}\right.
	\end{aligned}
	\label{eq:lem9_6}
\end{equation}
Similarly, the function (cf. (Eq. 49))
\begin{equation*}
	\begin{aligned}
		\left(
			\frac{2\pi (a-\varepsilon/2)}{|\Sigma_{1}^{-1/2} t|}\right)^{d/2}
			J_{d/2}\left((a-\varepsilon/2)|\Sigma_{1}^{-1/2} t|\right)e^{i \langle t,
			\Sigma_{1}^{-1/2}b\rangle} 2^{d/2} \Gamma\left(1+\frac{d}{2}\right) \frac{J_{d/2}(\varepsilon
	r/4)}{(\varepsilon r/4)^{d/2}}	\frac{1}{\det(\Sigma_1^{1/2})}
	\end{aligned}
	\label{eq:lem9_7}
\end{equation*}
is the Fourier transform of a function $=$ 
\begin{equation}
	\begin{aligned}
		=\left\{\begin{array}{l}\text { 1
				for } x\in \Sigma_{1}^{-1/2} B(b,a-\varepsilon) \\ \text { 0 for }
		x\in \left(\Sigma_{1}^{-1/2} B(b,a) \right)^{c}.\\ \end{array}\right.
	\end{aligned}
	\label{eq:lem9_8}
\end{equation}
By the well-known properties of Bessel functions (cf. (Eq. 50)):
$$
\left\{\begin{array}{l}
\left|\frac{J_{d / 2}(z)}{z^{d / 2}}\right| \leq C_d \text { for all positive } z \\
\left|J_{d / 2}(z)\right| \leq \frac{C_d}{\sqrt{z}} \text { for all positive } z,
\end{array}\right.
$$
and fact that (recall our assumption that $\norm{\Sigma_1}_{op}\leq 4$)
$$\frac{1}{|\Sigma_{1}^{-1/2}t|}\leq \frac{2}{\lvert
t\rvert}=\frac{2}{r}, $$ we have the following lemma:
\begin{lemma} (cf. Lemma 4 of \cite[Chapter VII]{esseen1945fourier})
	 Let $a$ and $\varepsilon$ be two assigned constants and $0<\varepsilon<a .$ There exists a
	 function $H\left(x, b, a, \varepsilon\right)$ such that
\begin{equation*}
	\begin{aligned}
		 H(x, b, a, \varepsilon)=\left\{\begin{array}{l}
1 \text { for } x\in \Sigma_{1}^{-1/2} B(b,a) \\
0 \text { for } x\in \left(\Sigma_{1}^{-1/2} B(b,a+\varepsilon)\right)^{c}
\end{array}, \text { and }|H(x, b, a, \varepsilon)| \leq 1\right.
	\end{aligned}
	\label{eq:lem9_26}
\end{equation*}
for all $x$.

Furthermore, the Fourier transform of $H$, $h\left(t, b, a ,\varepsilon\right)$, can be bounded by a
function depending on $t$ only through $r=\lvert t\rvert$, i.e.,
\begin{equation}
	\begin{aligned}
		|h(t, b, a, \varepsilon)| \leq \frac{C}{\det(\Sigma_1^{1/2})} \cdot \frac{a^{\frac{k-1}{2}}}{r^{\frac{k+1}{2}}},
	\end{aligned}
	\label{eq:lem9_28}
\end{equation}
\begin{equation}
	\begin{aligned}
		|h(t, b, a, \varepsilon)| \leq \frac{C}{\det(\Sigma_1^{1/2})} \cdot \frac{a^{\frac{k-1}{2}}}{\varepsilon^{\frac{k}{2}} r^{\frac{2 k+1}{2}}}.
	\end{aligned}
	\label{eq:lem9_29}
\end{equation}
There also exists a function $H(x, b, a,-\varepsilon)$ such that
\begin{equation*}
	\begin{aligned}
		 H(x, b, a, -\varepsilon)=\left\{\begin{array}{l}
1 \text { for } x\in \Sigma_{1}^{-1/2} B(b,a-\varepsilon) \\
0 \text { for } x\in \left(\Sigma_{1}^{-1/2} B(b,a)\right)^{c}
\end{array}, \text { and }|H(x, b, a, \varepsilon)| \leq 1\right.
	\end{aligned}
	\label{eq:lem9_30}
\end{equation*}
for all $x$, the Fourier transform of which, $h(t, b, a,-\varepsilon)$, satisfies the inequalities
\cref{eq:lem9_28,eq:lem9_29}.
	\label{lem:10}
\end{lemma}
Let
\be{
	\mu_n(b,a)=\P\left(\frac{\sum_{i=1}^n Y_i}{\sqrt{n}}\in \Sigma_{1}^{-1/2}B(b,a) \right)
}
and
\be{
	\psi(b,a)=\int_{\Sigma_{1}^{-1/2}B(b,a)} \left( p(y)+\frac{1}{6\sqrt{n}} \IE p'''(y) Y_1^3 \right)  dy.
}
We denote by $\Delta_{n}$ the difference of the characteristic functions of 
$$\frac{\sum_{i=1}^n Y_i}{\sqrt{n}}\quad  \text{ and } \quad   p(y)+\frac{1}{6\sqrt{n}} \IE p'''(y) Y_1^3. $$

Then, by \cref{lem:10} and the same argument as that in  \cite[p.104]{esseen1945fourier} leading to (Eq. 56), we have
\begin{equation}
	\begin{aligned}
		\left\lvert \mu_n(b,a)-\psi(b,a)  \right\rvert\leq \max\{A_{1},A_{2}\},
	\end{aligned}
	\label{eq:lem9_9}
\end{equation}
where 
\begin{equation}\label{f36}
A_{1}=|\psi(b, a+\varepsilon)-\psi(b, a)|
+\frac{1}{(2 \pi)^{k}} \int_{R_{k}}\left|\Delta_{n}(t) h(t, b, a, \varepsilon)\right| d t
\end{equation}
and 
\begin{equation*}
A_{2}=|\psi(b, a)-\psi(b, a-\varepsilon)|
+\frac{1}{(2 \pi)^{k}} \int_{R_{k}}\left|\Delta_{n}(t) h(t, b, a, -\varepsilon)\right| d t.
\end{equation*}
Similar to (Eq. 59) and (Eq. 60), we make the following
assumptions without loss of generality:
\begin{equation}
	\begin{aligned}
		1^\circ \quad a\leq 4|\Sigma_{1}^{-1/2} b|+4\log(2+n),
	\end{aligned}
	\label{eq:lem9_10}
\end{equation}
or else we choose $\varepsilon=a/8$ and proceed as in the subsequent estimations.
\begin{equation}
	\begin{aligned}
		2^\circ \quad \frac{1}{n^{\frac{d}{d+1}}}\left(\E|Y_1|^4\right)^{3/2}\leq
		\frac{1}{8},
	\end{aligned}
	\label{eq:lem9_11}
\end{equation}
or else \cref{l11} is true with a sufficiently large $C_{d}$. Choose (cf. (Eq. 61))
\begin{equation}
	\begin{aligned}
		\varepsilon=\frac{a}{n^{d/(d+1)}}\left(\E|Y_1|^4\right)^{3/2}.
	\end{aligned}
	\label{eq:lem9_17}
\end{equation}

We may confine ourselves to the estimation of $A_1$, $A_2$ being treated similarly.
To obtain an upper bound for $|\psi(b, a+\varepsilon)-\psi(b, a)|$,
we first consider 
 \begin{equation*}
    \begin{aligned}
		\int_{ y\in \Sigma_{1}^{-1/2} B(b,a+\varepsilon) \backslash \Sigma_{1}^{-1/2} B(b,a)} p(y) dy.
    \end{aligned}
    \label{eq:lem6_1}
\end{equation*}
%where his $\varepsilon$ equals $\frac{a}{n^{\frac{d}{d+1}}}\left(\E|Y_1|^4\right)^{3/2}$. 
For $a\leq  4\lvert \Sigma_{1}^{-1/2} b\rvert+ \left(4 \log(2+n)\wedge  \lvert \Sigma_{1}^{-1/2} b\rvert\right)$, from Gaussian anti-concentration inequalities (cf. \cite[Chapter 1, Section 3]{BR86}),
\begin{equation*}
    \begin{aligned}
		\int_{y\in \Sigma_{1}^{-1/2} B(b,a+\varepsilon)\backslash \Sigma_{1}^{-1/2} B(b,a)} p(y) dy\leq
		\frac{C_d
		\varepsilon}{\sigma_{min}}\leq
		\frac{C_d|\Sigma_{1}^{-1/2}b|}{\det{(\Sigma_{1}^{1/2})} n^{\frac{d}{d+1}}}\left(\IE |Y_1|^4\right)^{3/2},
    \end{aligned}
\end{equation*}
where $\sigma_{min}$ is the smallest eigenvalue of $\Sigma_{1}$, and, in the second inequality, we used $\norm{\Sigma_1}_{op}\leq 4$ and \eq{eq:lem9_17}.
%where we refer the reader to \cite[]{ball1993,bentkus2003,chen2011} for more detail about the first
%inequality.
If $ \lvert \Sigma_{1}^{-1/2} b\rvert\leq 4\log (n+2)$, we must also consider the case $5 \lvert \Sigma_{1}^{-1/2}b\rvert\leq
a\leq 4\lvert \Sigma_{1}^{-1/2}b\rvert+ 4 \log(2+n)$. In this situation, 
%for some positive constant $c_0$ depending only on $d$,
\begin{equation*}
    \begin{aligned}
     & \quad  \int_{\Sigma_{1}^{1/2}y\in B(b,a+\varepsilon)\backslash B(b,a)} p(y) dy
   \\&\leq \sup_{\Sigma_{1}^{1/2}y\in B(b,a+\varepsilon)\backslash B(b,a)}\frac{C_d
   a^{d}}{\det\left(\Sigma_{1}^{1/2}\right)n^{\frac{d}{d+1}}} \exp \{- \lvert y\rvert^{2}/2\}  \left(\IE
   |Y_1|^4\right)^{3/2},
       \end{aligned}
\end{equation*}
% where in the first inequality, we used the condition that the largest eigenvalue of $\Sigma_1$ is
% smaller than 4, and
where we used the inequality that the volume of
$\Sigma_{1}^{-1/2}\bigl(B(b,a+\varepsilon)\backslash B(b,a)\bigr)$ is smaller than $C_d a^{d}\left(\IE
   |Y_1|^4\right)^{3/2}/(\det(\Sigma_{1}^{1/2})n^{\frac{d}{d+1}})$. In fact,
   \begin{equation*}
	   \begin{aligned}
		   \Vol \left(\Sigma_{1}^{-1/2}\bigl(B(b,a+\varepsilon)\backslash
			   B(b,a)\bigr)\right)&=\int_{y\in\Sigma_{1}^{-1/2}\bigl(B(b,a+\varepsilon)\backslash
				   B(b,a)\bigr)}^{} dy\\&= \det(\Sigma_{1}^{-1/2})\int_{y\in B(b,a+\varepsilon)\backslash
			   B(b,a)}^{} dy\\&\leq \frac{\Vol\left(B(0,a+\varepsilon)\backslash B(0,a)\right)}{\det\left(\Sigma_{1}^{1/2}\right)}
							\\&\leq \frac{C_d a^{d}\left(\IE
   |Y_1|^4\right)^{3/2}}{\det(\Sigma_{1}^{1/2})n^{\frac{d}{d+1}}}, 
	   \end{aligned}
   \end{equation*}
   where $\Vol(A)$ denotes the volume of $A\subset \mathbb{R}^{d}$ and in the last inequality, we used \eq{eq:lem9_11} and \eq{eq:lem9_17}.
 %$\varepsilon=\frac{a}{n^{\frac{d}{d+1}}}\left(\E|Y_1|^4\right)^{3/2}$ and assumed $\frac{(\E|Y_1|^4)^{3/2}}{n^{\frac{d}{d+1}}}<\frac{1}{10}$ (otherwise, \cref{l11} is trivial).
   Furthermore, because $a\leq 5
\lvert y\rvert$ (which follows from $\Sigma_{1}^{1/2}y\in B(b,a+\varepsilon)\backslash B(b,a)$, $5|\Sigma_1^{-1/2}b|\leq a$, and the assumption that $\norm{\Sigma_1}_{op}\leq 4$), we have
\begin{equation}
    \begin{aligned}
       & \quad\sup_{\Sigma_{1}^{1/2}y\in B(b,a+\varepsilon)\backslash B(b,a)}\frac{C_d
   a^{d}}{\det\left(\Sigma_{1}^{1/2}\right)n^{\frac{d}{d+1}}} \exp \{- \lvert y\rvert^{2}/2\}  \left(\IE |Y_1|^4\right)^{3/2}
   \\&\leq \sup_{\Sigma_{1}^{1/2}y\in B(b,a+\varepsilon)\backslash B(b,a)}\frac{C_d \lvert
   y\rvert^{d}}{\det\left(\Sigma_{1}^{1/2}\right)n^{\frac{d}{d+1}}} \exp \{- \lvert y\rvert^{2}/2\}  \left(\IE |Y_1|^4\right)^{3/2}
   \\&\leq \frac{C_d}{\det\left(\Sigma_{1}^{1/2}\right)n^{\frac{d}{d+1}}}  \left(\IE |Y_1|^4\right)^{3/2}.
    \end{aligned}
\end{equation}
Therefore,
\ben{
	\label{lem9_13}
\int_{y\in \Sigma_{1}^{-1/2} B(b,a+\varepsilon)\backslash \Sigma_{1}^{-1/2} B(b,a)} p(y) dy\leq C_d \frac{1+|\Sigma_{1}^{-1/2}b|}{\det\left(\Sigma_{1}^{1/2}\right)n^{\frac{d}{d+1}}} \left(\E|Y_1|^4\right)^{3/2}.
}
From $|\E p'''(y) Y_1^3|\leq C_d \E|Y_1|^3 \left(|y|+\lvert y\rvert^3\right) p(y)$, by similar arguments we have
\begin{equation}
    \begin{aligned}
        \int_{y\in \Sigma_{1}^{-1/2} B(b,a+\varepsilon)\backslash \Sigma_{1}^{-1/2} B(b,a)} \frac{1}{6\sqrt{n}} |\E p'''(y) Y_1^3| dy\leq
       C_d  \frac{1+\lvert\Sigma_{1}^{-1/2} b\rvert}{\det\left(\Sigma_{1}^{1/2}\right) n^{\frac{d}{d+1}}}  \left(\IE |Y_1|^4\right)^{3/2},
    \end{aligned}
	\label{lem9_12}
\end{equation}
where we used
$\frac{\E|Y_1|^3}{\sqrt{n}}\leq \sqrt{\frac{\left(\E|Y_1|^4\right)^{3/2}}{n}}$ and \eq{eq:lem9_11}. Thus, by \cref{lem9_13,lem9_12} we have (cf. (Eq. 62))
\begin{equation}
	\begin{aligned}
		|\psi(b, a+\varepsilon)-\psi(b, a)|\leq  C_d  \frac{1+\lvert\Sigma_{1}^{-1/2}
		b\rvert}{\det\left(\Sigma_{1}^{1/2}\right) n^{\frac{d}{d+1}}}  \left(\IE
	|Y_1|^4\right)^{3/2}.
	\end{aligned}
	\label{eq:lem9_14}
\end{equation}
To bound \eq{f36}, it remains to consider (cf. (Eq. 63))
\begin{equation}
	\begin{aligned}
		I&:=\frac{1}{(2 \pi)^{k}} \int_{R_{k}}\left|\Delta_{n}(t) h(t, b, a, \varepsilon)\right| d
		t\\&=\frac{1}{(2 \pi)^{k}} \int\limits_{0 \leq r \leq \frac{\sqrt{n}}{\left(d \beta_{4}\right)^{3 /
		4}}}+\frac{1}{(2 \pi)^{k}} \int\limits_{ r>\frac{\sqrt{n}}{\left(d \beta_{4}\right)^{ 3/
4}}}=I_{1}+I_{2},
	\end{aligned}
	\label{eq:lem9_15}
\end{equation}
where $\beta_{4}=\IE \lvert Y_{1} \rvert^{4}$.
For $I_{1}$, by an argument similar to that in (Eq. 64), we have 
\begin{equation}
	\begin{aligned}
		I_{1}\leq \frac{C_d}{\det(\Sigma_{1}^{1/2})} \left( \frac{|\Sigma_{1}^{-1/2}
		b|^{\frac{d-1}{2}}}{n}+\frac{1}{n^{\frac{d}{d+1}}}\right)\left(\E|Y_1|^4\right)^{3/2} .
	\end{aligned}
	\label{eq:lem9_18}
\end{equation}
By an argument similar to that leading to (Eq. 76), we have 
\begin{equation}
	\begin{aligned}
		I_{2}\leq \frac{C_d}{n^{\frac{d}{d+1}}\det(\Sigma_{1}^{1/2})}  \left(\E|Y_1|^4\right)^{3/2}. 
	\end{aligned}
	\label{eq:lem9_20}
\end{equation}
Using \cref{eq:lem9_15,eq:lem9_18,eq:lem9_20}, we obtain
\begin{equation}
	\begin{aligned}
		I\leq \frac{C_d}{\det(\Sigma_{1}^{1/2})} \left( \frac{|\Sigma_{1}^{-1/2}
		b|^{\frac{d-1}{2}}}{n}+\frac{1}{n^{\frac{d}{d+1}}}\right)\left(\E|Y_1|^4\right)^{3/2}. 
	\end{aligned}
	\label{eq:lem9_21}
\end{equation}
Therefore, by \cref{eq:lem9_9,eq:lem9_14,eq:lem9_21}, we have 
\begin{equation}
	\begin{aligned}
		A_{1}\leq \frac{C_d}{\det\bigl(\Sigma_{1}^{1/2}\bigr)}
		\biggl(\frac{1+|\Sigma_{1}^{-1/2}b|}{n^{\frac{d}{d+1}}}+\frac{|\Sigma_{1}^{-1/2}b|^{\frac{d-1}{2}}}{n}\biggr)\bigl(\IE
		|Y_1|^4\bigr)^{3/2},
	\end{aligned}
	\label{eq:lem9_22}
\end{equation}
and thus we complete the proof of \cref{l11}.

% From the above arguments, we have an additional factor of $\lvert \Sigma_{1}^{-1/2}b\rvert$ in (Eq. 62) in
% \cite{esseen1945fourier}. 
% \cref{l11} follows from the proof of \cite[Chapter VII, Theorem 1]{esseen1945fourier} with the above modifications.
% As a result, there will be an additional
% factor $\det (\Sigma_{1}^{-1/2})$ in the bound for $I$ in (Eq. 63). The only essential change is that we
% need to change the threshold of $a$ in (Eq. 59) to be $|\Sigma_{1}^{-1/2} b|+C_*\log(2+n)$ for a
% sufficiently large positive constant $C_*$ (fixed in the rest of this proof) which depends only on $d$. This results in an additional
% error term in (Eq. 64).
\end{proof}

\end{proof}

% \begin{proof}[Proof of Lemma \ref{l5}]
% \blue{add details}
% \end{proof}

% \begin{proof}[Proof of Lemma \ref{l6}]
% \blue{add details}
% \end{proof}

\section{Proofs of Lemmas}\label{s5}

\begin{proof}[Proof of \cref{l4}]
Recall $|Z_x|\leq 3x$.
Let $r=\frac{1}{\sqrt{n}} \langle \sqrt{2h} D Z_x , X_{1}\rangle $. Then, by \cref{eq:2_2} and Taylor's expansion,
\begin{equation}
    \begin{aligned}
       \wt \mu_{1}= \frac{\IE^{Z_{x}} D X_{1} e^{r}}{\IE^{Z_x} e^{r} } =\frac{ \IE^{Z_x}
		   D X_{1}\left\{1+r+ \frac{1}{2}r^{2} + R_{1}\right\} }{\IE^{Z_x} \{1+r+ R_{2}\} }=\frac{ \IE^{Z_x}
            \left\{D X_{1}r+\frac{1}{2} D X_{1} r^{2} +D X_{1} R_{1}\right\} }{\IE^{Z_x} \{1 +
        R_{2}\} },
    \end{aligned}
    \label{eq:lem2_1}
\end{equation}
where $R_{1}= \frac{1}{2} \int_{0}^{1} (1-u)^{2} r^{3} e^{ur} du$, and $R_2= \int_{0}^{1} (1-u) r^{2} e^{ur} du$. 
We observe that
\be{
\IE ^{Z_x}( D X_1 r)=D \frac{\sqrt{2h} D Z_x}{\sqrt{n}},\ \IE ^{Z_x} \left(\frac{1}{2} D X_1
	r^2\right)=\frac{1}{2n}D \IE^{Z_{x}} \{\langle \sqrt{2h} D  Z_x ,  
X_{1}\rangle ^{2}  X_{1}\}.%\ \E^{Z_x}(r^2)=O\left(\frac{x^2}{n}\right).
}
Because of the assumption $\E e^{t_0|X_1|}\leq c_0<\infty$ and $|Z_x|\leq 3x$,
for $1<x\leq \varepsilon n^{1/6}$ with a sufficiently small $\varepsilon>0$, we have, $\E^{Z_x}(R_2)=O\bigl(\frac{x^2}{n}\bigr)$ and each component of $\IE ^{Z_x}(X_1 R_1)$ is $O\bigl(\frac{x^3}{n^{3/2}}\bigr)$.
Thus, 
\begin{equation}
    \begin{aligned}
       \wt \mu_{1}=\frac{\sqrt{2h}Q Z_x}{\sqrt{n}}+\frac{1}{2n}D\IE^{Z_{x}} \{\langle \sqrt{2h} DZ_x ,
X_{1}\rangle ^{2}X_{1}\} +DV, 
    \end{aligned}
    \label{eq:lem2_2}
\end{equation}
where each component of the $d$-vector $V$ is $O\left(\frac{x^3}{n^{3/2}}\right)$.
Next, for $\wt \Sigma$, by Taylor's expansion,
\begin{equation}
    \begin{aligned}
        \wt \Sigma= &\IE^{Z_x} \widetilde{X}_{1} \widetilde{X}_{1}^{T}- \wt\mu_{1} \wt\mu_{1}^{T} =
		\frac{D\left(\IE^{Z_x} X_{1} X_{1}^{T} e^{r}\right)D}{\IE^{Z_x} e^{r}}- \wt\mu_{1} \wt\mu_{1}^{T}\\
		=&\frac{D \left(\IE^{Z_x} X_{1} X_{1}^{T} (1+r+R_{2})\right)D}{\IE^{Z_x} (1+r+R_{2})}- \wt\mu_{1} \wt\mu_{1}^{T}.
    \end{aligned}
    \label{eq:lem2_3}
\end{equation}
%where $R_{2}= \int_{0}^{1} (1-u) r^{2} e^{ur} du$. 
Using similar arguments to control error terms as for
\cref{eq:lem2_2},
because 
\be{
\IE^{Z_x}\left(X_1 X_1^T\right)=I_d, \ \IE^{Z_x}\left(X_1 X_1^T r\right)=\frac{1}{\sqrt{n}} \IE^{Z_{x}}\left\{\langle \sqrt{2h} Z_x, X_{1}\rangle 
X_{1}X_{1}^{T} \right\},\ \IE ^{Z_x}r=0,
}
we have
\begin{equation}
    \begin{aligned}
		\wt \Sigma = D\biggl( I_{d} + \frac{1}{\sqrt{n}} \IE^{Z_{x}}\left\{\langle \sqrt{2h} DZ_x , X_{1}\rangle 
		X_{1}X_{1}^{T} \right\}+R\biggr)D,
    \end{aligned}
    \label{eq:lem2_4}
\end{equation}
where $R$ is a matrix such that each of its entries is $O\left(\frac{x^2}{n}\right)$ and $DRD$ absorbs $\wt\mu_{1} \wt\mu_{1}^{T}$.

From simple calculations similar to those in \eq{eq:lem2_1}--\eq{eq:lem2_4}, we obtain \eq{f26} and \eq{f30}.

Let $A=\IE^{Z_{x}}\left\{\langle \sqrt{2h} D Z_x , X_{1}\rangle 
    X_{1}X_{1}^{T} \right\}$, and let $A_{ij}$ and $R_{ij}$ be the $(i,j)th$ element of matrices
    $A$ and $R$, respectively. 
    Then, from the definition of determinate, $A_{ij}=O(x)$ and $R_{ij}=O\left(\frac{x^2}{n}\right)$, we have
    \begin{equation}
        \begin{aligned}
			\det \left( D^{-1}\wt\Sigma D^{-1} \right)= \sum_{\sigma}
            (-1)^{\sgn(\sigma)}\prod_{i=1}^{d}\left(\delta_{i\sigma(i)}
            +\frac{1}{\sqrt{n}} A_{i\sigma(i)}+
            R_{i\sigma(i)}\right)=\det \left(I_{d}+ \frac{A}{\sqrt{n}}\right)+ O\Bigl(\frac{x^{2}}{n}\Bigr),
        \end{aligned}
        \label{eq:lem2_5}
    \end{equation}
    where the sum is over all permutations $\sigma$ of $\{1,\dots, n\}$, $\sgn$ denotes the sign of a permutation, $\delta_{ij}=1$ for $i=j$ and $\delta_{ij}=0$ for $i\ne j$. Moreover, because $\wt \lambda_j=O(x)$, we have
    \begin{equation}
        \begin{aligned}
            \det \left(I_{d}+ \frac{A}{\sqrt{n}}\right)= \prod_{j=1}^{d} \left(1+\frac{1}{\sqrt{n}} \wt \lambda_{j}\right)
            =1+\frac{1}{\sqrt{n}}\sum_{j=1}^{d}  \wt \lambda_{j} + O\left(\frac{x^{2}}{n}\right).
        \end{aligned}
        \label{eq:lem2_6}
    \end{equation}
    Combining \cref{eq:lem2_5,eq:lem2_6}, we obtain \eq{eq:lem2_9} for $\det \wt \Sigma$.
    
    For small enough $x^{6}/n$, the absolute values of eigenvalues of matrices $A/\sqrt{n}$ and $R$ are
    smaller than $1/4$, and thus we have 
    \begin{equation}
        \begin{aligned}
			\wt \Sigma^{-1}= D^{-1} \Bigl(I_{d} - \frac{1}{\sqrt{n}} A+ R'\Bigr)D^{-1},
        \end{aligned}
        \label{eq:lem2_10}
    \end{equation}
   where 
   \begin{equation}
       \begin{aligned}
           R' = \sum^{\infty}_{r=2} (-1)^{r}
           \left(\frac{1}{\sqrt{n}} A+R \right)^{r}-R.
       \end{aligned}
       \label{eq:lem2_11}
   \end{equation}
   For any two vectors $X$ and $Y$ with $\lvert X\rvert=\lvert Y\rvert=1$, we have
   \begin{equation}
       \begin{aligned}
           \lvert X^{T}R'Y \rvert\leq& \Bigl\lvert\Bigl\{
           \Bigl(\frac{1}{\sqrt{n}}A+R\Bigr)^{2}\Bigr\}^{T}X \Bigr\rvert \,\sum^{\infty}_{r=0} \Bigl\lvert
           \Bigl(\frac{1}{\sqrt{n}}A+R\Bigr)^{r} Y\Bigr\rvert + \lvert X^{T} R Y\rvert\\ \leq& \Bigl\lvert\Bigl\{
           \Bigl(\frac{1}{\sqrt{n}}A+R\Bigr)^{2}\Bigr\}^{T}X \Bigr\rvert \,\sum^{\infty}_{r=0} 
       \Bigl(\frac{1}{2}\Bigr)^{r}  + \lvert X^{T} R Y\rvert\\
               \leq& C \frac{x^{2}}{n},
       \end{aligned}
       \label{eq:lem2_12}
   \end{equation}
   which proves \cref{f24}. 
    Finally, \cref{eq:lem2_8}
    follows from the definition of eigenvalues.
  \end{proof}

 \begin{proof}[Proof of \cref{lem:4}]
  Recalling that $\wt p(y)= \phi( \wt \Sigma^{-1/2}y)/ \sqrt{\det \wt \Sigma}$, by \cref{eq:lem2_9} and $\wt \lambda_j=O(x)$, we
  have, for sufficiently small $\varepsilon>0$ and $1<x\leq \varepsilon n^{1/6}$,
  \begin{equation}
      \begin{aligned}
		  \wt p(y)&= \det (Q)^{-1/2} \Biggl[1-\frac{1}{2\sqrt{n}} \sum^{d}_{j=1} \wt \lambda_{j}+
          O\left(\frac{x^{2}}{n}\right) \Biggr] \phi( \wt \Sigma^{-1/2}y)\\&= 
              \Biggl[1-\frac{1}{2\sqrt{n}} \sum^{d}_{j=1}\wt \lambda_{j}+
				  O\left(\frac{x^{2}}{n}\right) \Biggr]\frac{1}{\sqrt{\det Q}(2\pi)^{d/2}} \exp \Bigl\{
          -\frac{1}{2} y^{T} \wt \Sigma^{-1} y\Bigr\}.
     \end{aligned}
      \label{eq:lem4_2}
  \end{equation}
  By \eq{f24} and \eq{f25}, we have 
  \begin{equation}
      \begin{aligned}
		  y^{T} \wt \Sigma^{-1} y = y^{T}Q^{-1}y-\frac{1}{\sqrt{n}} \IE^{Z_x} \langle
		  D^{-1} y, X_{1} \rangle^{2} \langle \sqrt{2h} D Z_x
		  , X_{1}\rangle + O \Bigl(\frac{x^{2}\lvert D^{-1} y\rvert^{2}}{n}\Bigr),
      \end{aligned}
      \label{eq:lem4_3}
  \end{equation}
  \begin{equation}
      \begin{aligned}
          y^{T} \wt \Sigma^{-1} (\sqrt{n}\wt\mu_{1})=\langle \sqrt{2h} Z_x ,
		   y\rangle -\frac{1}{2\sqrt{n}} \IE^{Z_x} \langle D^{-1} y, X_{1} \rangle  \langle
		   \sqrt{2h} D Z_x
		  , X_{1}\rangle^{2} + O \Bigl(\frac{x^{3}\lvert D^{-1} y\rvert}{n}\Bigr),\\
           \text{(Two terms of order $\frac{1}{\sqrt{n}}$ are combined)}
      \end{aligned}
      \label{eq:lem4_4}
  \end{equation}
  \begin{equation}
      \begin{aligned}
          (\sqrt{n}\wt\mu_{1})^{T} \wt \Sigma^{-1} (\sqrt{n}\wt\mu_{1})= 2h \lvert
		  D Z_{x}\rvert^{2} + O \Bigl(\frac{x^{4}}{n}\Bigr). \quad \text{(Two terms of order $\frac{1}{\sqrt{n}}$ are cancelled)}
      \end{aligned}
      \label{eq:lem4_5}
  \end{equation}
From \eq{f25},
\ben{\label{f31}
\sqrt{n}\wt \mu_1=\sqrt{2h} Q Z_x+DV',\quad \text{where each component of the $d$-vector $V'$ is $O\left(\frac{x^2}{\sqrt{n}}\right)$}.
}
From \eq{f24},
\ben{\label{f32}
	\wt \Sigma^{-1} =Q^{-1}+D^{-1}R''D^{-1},\quad \text{where each entry of $d\times d$ matrix $R''$ is $O\left(\frac{x}{\sqrt{n}}\right)$}.
}
From \eq{f31}, \eq{f32} and \eq{f26},  
  with $\widehat{X}_{1}=\widetilde{X}_{1}-\wt \mu_{1}$, we have, by only keeping the main term (recall there will be a factor of $1/\sqrt{n}$ in front of the second term on the left-hand side of \eq{eq:lem4_1})
  \begin{equation}
      \begin{aligned}
          &\quad\IE^{Z_x} \left\{ 3\langle \wt \Sigma^{-1} \wh X_{1}, \wh X_{1}\rangle\langle \wt
              \Sigma^{-1}(y-\sqrt{n} \wt \mu_{1}), \wh {X}_{1}\rangle-
          \langle \wt \Sigma^{-1}(y-\sqrt{n} \wt \mu_{1}), \wh X_{1}\rangle^3 \right\}
		\\&= \E^{Z_x} \biggl\{ 3\Bigl\langle D^{-1}\Bigl(I_d+O\Bigl(\frac{x}{\sqrt{n}}\Bigr)\Bigr)
			D^{-1}\Bigl(\wt X_1+D\cdot O\Bigl(\frac{x}{\sqrt{n}}\Bigr)\Bigr), (\wt X_1+D\cdot O\Bigl(\frac{x}{\sqrt{n}}\Bigr))\Bigr\rangle 
           \\&\qquad \quad \quad \times\Bigl\langle  D^{-1}\Bigl(I_d+O\Bigl(\frac{x}{\sqrt{n}}\Bigr)\Bigr) D^{-1}\Bigl(y-\sqrt{2h}QZ_x+D\cdot O\Bigl(\frac{x^2}{\sqrt{n}}\Bigr)\Bigr) ,\Bigl(\wt X_1+D\cdot O\Bigl(\frac{x}{\sqrt{n}}\Bigr)\Bigr) \Bigr\rangle 
           \\&\quad\quad\quad\quad  -\Bigl\langle  D^{-1}\Bigl(I_d+O\Bigl(\frac{x}{\sqrt{n}}\Bigr)\Bigr) D^{-1}\Bigl(y-\sqrt{2h}QZ_x+D\cdot O\Bigl(\frac{x^2}{\sqrt{n}}\Bigr)\Bigr) , \Bigl(\wt X_1+D\cdot O\Bigl(\frac{x}{\sqrt{n}}\Bigr)\Bigr)\Bigr\rangle^3 \biggr\}
          \\&=  \IE^{Z_x}  \left\{ 3\langle   X_{1}, X_{1}\rangle\langle
			  D^{-1}y-\sqrt{2h}D Z_x,  {X}_{1}\rangle-
		  \langle D^{-1} y-\sqrt{2h} DZ_x,  X_{1}\rangle^3 \right\}+
		  O\biggl(\frac{x^{4}}{\sqrt{n}}+ \frac{x \lvert D^{-1} y\rvert^{3}}{\sqrt{n}}\biggr),
      \end{aligned}
      \label{eq:lem4_6}
  \end{equation}
  where we used \eq{f31}, \eq{f32} and an abuse of notation (using $O(\cdot)$ for vectors and matrices to show the magnitude of their entries) in the first equality, and \eq{f26} and straightforward simplifications of error terms in the second equality.
  For example, one of the error terms is
  \be{
  \E^{Z_x} \left\{ \langle D^{-1} I_d D^{-1} (y-\sqrt{2h} Q Z_x), \wt X_1 \rangle^2\langle D^{-1} I_d D^{-1} (y-\sqrt{2h} Q Z_x) , D\cdot O(\frac{x}{\sqrt{n}})\rangle    \right\},
  }
  which is of the order
  \be{
  O(x^3+|D^{-1}y|^3) \frac{x}{\sqrt{n}} \E^{Z_x} |D^{-1} \wt X_1|^2=O(\frac{x^4+x|D^{-1}y|^3}{\sqrt{n}})   \qquad (\text{cf. } \eq{eq:lem2_3}\&\eq{eq:lem2_4}).
  }
  By \cref{eq:lem4_7,eq:2_3,eq:lem4_2}, we have 
  \begin{equation}
      \begin{aligned}
      &\quad \wt p(y-\sqrt{n}
                  \wt  \mu_{1} ) +\frac{1}{6\sqrt{n}}\IE^{Z_{x}}\Bigl\{\wt p'''(y-\sqrt{n}
                    \wt    \mu_{1})
                      (\widetilde{X}_{1}-\wt \mu_{1})^3\Bigr\}\\
          &=\Bigl[1-\frac{1}{2\sqrt{n}} \sum^{d}_{j=1}\wt \lambda_{j}+
			  O\Bigl(\frac{x^{2}}{n}\Bigr) \Bigr]\frac{1}{\sqrt{\det Q}(2\pi)^{d/2}} \exp \Bigl\{
              -\frac{1}{2} (y-\sqrt{n}\wt \mu_{1} )^{T} \wt \Sigma^{-1} (y-\sqrt{n}
         \wt \mu_{1})\Bigr\}\\
          &\quad\times \Bigl(1+\frac{1}{6\sqrt{n}}\IE^{Z_x} \left\{ 3\langle \wt \Sigma^{-1} \wh X_{1}, \wh X_{1}\rangle\langle \wt
              \Sigma^{-1}(y-\sqrt{n} \wt \mu_{1}), \wh {X}_{1}\rangle-
          \langle \wt \Sigma^{-1}(y-\sqrt{n} \wt \mu_{1}), \wh X_{1}\rangle^3 \right\}\Bigr).     
      \end{aligned}
      \label{eq:lem4_8}
  \end{equation}
  % Combining \cref{eq:lem4_3,eq:lem4_4,eq:lem4_5,eq:lem4_6}, we have \cref{eq:lem4_8} is equal to 
  % \begin{equation}
  %     \begin{aligned}
  %         &\Bigl[1-\frac{1}{2\sqrt{n}} \sum^{d}_{j=1} \lambda_{j}+
  %             \bigo(\frac{x^{2}}{n}) \Bigr]\frac{1}{(2\pi)^{d/2}}\exp \biggl\{-\frac{\lvert y\rvert^{2}}{2}-h \lvert Z_{x}\rvert^{2}+ \sqrt{2h} Z_x \cdot y
  %             \biggr\}\\
  %             &\quad\times \exp \biggl\{\frac{1}{2\sqrt{n}} \IE^{Z_x}\Bigl\{\langle \sqrt{2h} Z_x, 
  %         X_{1}\rangle\langle X_{1}, y\rangle^{2}-\langle \sqrt{2h} Z_x, 
  %         X_{1}\rangle^{2}\langle X_{1}, y\rangle\Bigr\} + \bigo\Bigl(\frac{x^{4}}{n}+
  %     \frac{x^{2}\lvert y\rvert^{2}}{n}\Bigr)\biggr\}\\
  %             &\quad\times\Biggl(1+\frac{1}{6\sqrt{n}} \IE^{Z_x}  \left\{ 3\langle   X_{1}, X_{1}\rangle\langle
  %             y-\sqrt{2h} Z_x,  {X}_{1}\rangle-
  %             \langle y-\sqrt{2h} Z_x,  X_{1}\rangle^3 \right\}+
  %             \bigo\biggl(\frac{x^{4}}{n}+ \frac{x \lvert y\rvert^{3}}{n}\biggr)\Biggr)
  %     \end{aligned}
  %     \label{eq:lem4_9}
  % \end{equation}
   Combining
              \cref{eq:lem4_3,eq:lem4_4,eq:lem4_5,eq:lem4_6,eq:lem4_8}, we have
              \begin{equation}
                  \begin{aligned}
                      &\quad \wt p(y-\sqrt{n}
                 \wt   \mu_{1} ) +\frac{1}{6\sqrt{n}}\IE^{Z_{x}}\Bigl\{\wt p'''(y-\sqrt{n}
                   \wt     \mu_{1})
                      (\widetilde{X}_{1}-\wt \mu_{1})^3\Bigr\}\\&= \exp \bigl\{-h \lvert D Z_{x}\rvert^{2}+\langle  \sqrt{2h} Z_x , y\rangle 
					  \bigr\}\phi(D^{-1} y)(\det D)^{-1} \\&\quad\times\Bigl(1+B_{0}+
                      O\Bigl(\frac{x^{2}}{n}\Bigr)\Bigr)\exp \Bigl\{B_{1}+O\Bigl(\frac{x^{4}}{n}+
					  \frac{x^{2}\lvert D^{-1} y\rvert^{2}}{n}\Bigr)\Bigr\}\biggl(1+B_{2}+
				  O\biggl(\frac{x^{4}}{n}+ \frac{x \lvert D^{-1} y\rvert^{3}}{n}\biggr)\biggr)\\&= H_{1}(y)+H_{2}(y),
                  \end{aligned}
                  \label{eq:lem4_10}
              \end{equation}
              where $H_{1}(y)$ is defined in \cref{eq:lem4_11}
              and 
              \begin{equation}
                  \begin{aligned}
                      H_{2}(y)&=  \exp \bigl\{-h \lvert D Z_{x}\rvert^{2}+ \langle \sqrt{2h} Z_x, y\rangle 
						  \bigr\}\phi(D^{-1}y)(\det D)^{-1} \\&\quad\times\Bigl(1+B_{0}+
                      O\Bigl(\frac{x^{2}}{n}\Bigr)\Bigr)\biggl(\exp \biggl\{B_{1}+O\Bigl(\frac{x^{4}}{n}+
						  \frac{x^{2}\lvert D^{-1} y\rvert^{2}}{n}\Bigr)\biggr\}-B_{1}-1+O\Bigl(\frac{x^{4}}{n}+
					  \frac{x^{2}\lvert D^{-1} y\rvert^{2}}{n}\Bigr)\biggr)\\&\quad\times\biggl(1+B_{2}+
				  O\biggl(\frac{x^{4}}{n}+ \frac{x \lvert D^{-1} y\rvert^{3}}{n}\biggr)\biggr).
                  \end{aligned}
                  \label{eq:lem4_12}
              \end{equation}   
			  Next, consider $H_{2}(y)$. By $B_1=O\left(\frac{x|D^{-1}y|^2+x^2|D^{-1} y|}{\sqrt{n}}\right)$, the elementary inequality $|e^{x}-1-x|\leq \frac{1}{2}
			  x^{2} e^{\lvert x\rvert}$, and recalling that $1< x\leq \varepsilon{n^{1/6}}$, we have
              \begin{equation}
                  \begin{aligned}
                    &  \quad\biggl\lvert\exp \biggl\{B_{1}+O\biggl(\frac{x^{4}}{n}+
      \frac{x^{2}\lvert  D^{-1} y\rvert^{2}}{n}\biggr)\biggr\}-B_{1}-O\biggl(\frac{x^{4}}{n}+
                      \frac{x^{2}\lvert  D^{-1} y\rvert^{2}}{n}\biggr)-1\biggr\rvert\\&\leq \frac{1}{2} \biggl(B_{1}+O\biggl(\frac{x^{4}}{n}+
                      \frac{x^{2}\lvert  D^{-1} y\rvert^{2}}{n}\biggr)\biggr)^{2} \exp \biggl\{ \biggl\lvert
                          B_{1}+O\biggl(\frac{x^{4}}{n}+
  \frac{x^{2}\lvert  D^{-1} y\rvert^{2}}{n}\biggr)\biggr\rvert\biggr\}\\&\leq  C
  \biggl(\frac{x^{2}|D^{-1}y|^{4}}{n}+\frac{x^{4}|D^{-1}y|^{2}}{n} + \frac{x^{2}}{n}\biggr) \exp \biggl\{
                      C\biggl(\frac{x \lvert  D^{-1} y\rvert^{2}+x^{2} \lvert  D^{-1} y\rvert}{\sqrt{n}}\biggr)\biggr\}
                 \end{aligned}
                  \label{eq:lem4_13}
              \end{equation}
              for some positive constant $C$. By \cref{eq:lem4_12,eq:lem4_13}, we complete the proof of
              \cref{eq:lem4_14}.

          \end{proof}  
  
\begin{proof}[Proof of \cref{l1}]
Because of the assumption of the finiteness of the moment generating function of $X_1$ near 0, the
function $\E e^{\langle a, X_{1}\rangle }$ is finite for all $a\in \mathbb{R}^d$ with $|a|\leq
\varepsilon $ for a sufficiently small $\varepsilon>0$. 
 Recall $\hat{G}(b)=\IE  e^{\langle b, X_{1}\rangle } $ and we
have 
\begin{equation}
    \begin{aligned}
        \IE e^{\langle a, W\rangle } = \hat{G}^{n} \Bigl(\frac{a}{\sqrt{n}}\Bigr).
    \end{aligned}
    \label{eq:lem3_1}
\end{equation}
Furthermore, 
\begin{equation}
    \begin{aligned}
        &\quad\hat{G}^{n} \Bigl(\frac{a}{\sqrt{n}}\Bigr)- \exp
        \biggl\{\frac{|a|^2}{2}+\frac{1}{6\sqrt{n}} \IE \langle a,
        X_{1}\rangle ^{3}\biggr\}\\
        &= \exp
        \biggl\{\frac{|a|^2}{2}+\frac{1}{6\sqrt{n}} \IE \langle a,
        X_{1}\rangle ^{3}\biggr\}\\
        &\quad\times \biggl(\exp \biggl\{n \biggl(\log \hat{G}\Bigl(\frac{a}{\sqrt{n}}\Bigr)- \frac{|a|^2}{2n}-
        \frac{\IE \langle a, X_{1}\rangle ^{3}}{6n^{3/2}}\biggr) \biggr\}-1\biggr).
    \end{aligned}
    \label{eq:lem3_2}
\end{equation}
 By Taylor's expansion and using $\IE X_1=0$ and $\Cov(X_1)=I_d$,
\begin{equation}
    \begin{aligned}
        \log \hat{G}\Bigl(\frac{a}{\sqrt{n}}\Bigr)- \frac{|a|^2}{2n}-
        \frac{\IE \langle a, X_{1}\rangle ^{3}}{6n^{3/2}}=\frac{1}{6} \int_{0}^{1} (1-u)^{3}
        \biggl(\frac{d^{4}}{du^{4}} \log \hat{G}\Bigl(\frac{ua}{\sqrt{n}}\Bigr) \biggr)     
         du.
    \end{aligned}
    \label{eq:lem3_3}
\end{equation}
To bound the integration on the right-hand side of \cref{eq:lem3_3}, we need the following lemma:
\begin{lemma}
    For $a\in \mathbb{R}^d$ such that $|a|\leq t_{0}\sqrt{n}/2$ and  $\Bigl\lvert \hat{G}\Bigl(\frac{ua}{\sqrt{n}}\Bigr)\Bigr\rvert\geq
    \frac{1}{2}, \forall\ u\in [0,1]$, we have for $u\in [0,1]$, 
    \begin{equation}
        \begin{aligned}
            \biggl\lvert\frac{d^{4}}{du^{4}} \log \hat{G}
            \biggl(\frac{ua}{\sqrt{n}}\biggr)\biggr\rvert\leq \frac{C}{n^{2}} ( \lvert
                a\rvert^{4}),
        \end{aligned}
        \label{eq:lem5_1}
    \end{equation}
    where $C$ is a constant depending only on $d$, $c_{0}$, and $t_{0}$ in \cref{t1}.
    \label{lem:5.1}
\end{lemma}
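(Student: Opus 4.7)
The plan is to apply the multivariate chain rule to $\varphi(u) := \log \hat G(ua/\sqrt n)$, exploiting that each differentiation in $u$ composes with the affine map $u \mapsto ua/\sqrt n$ and hence brings down exactly one factor of $a/\sqrt n$. Consequently $\varphi^{(4)}(u)$ can be written, via Fa\`a di Bruno's formula, as a finite sum indexed by set partitions $\pi$ of $\{1,2,3,4\}$ of terms of the form
\[
\prod_{B\in \pi}\bigl(\partial^{|B|}\log\hat G\bigr)(b)\bigl[a/\sqrt n,\ldots,a/\sqrt n\bigr],
\]
with $b := ua/\sqrt n$; the total number of $(a/\sqrt n)$-factors inserted as directional arguments equals $4$, so each term already carries a scalar factor $|a/\sqrt n|^4 = |a|^4/n^2$. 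It therefore suffices to show that the partial derivatives of $\log \hat G$ of order up to $4$ are uniformly bounded at every $b$ with $|b|\le t_0/2$ and $|\hat G(b)|\ge 1/2$.

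For the partial-derivative bounds, note that under the hypothesis $|a|\le t_0\sqrt n/2$ we have $|ua/\sqrt n|\le t_0/2$ for all $u\in[0,1]$. A partial derivative of $\hat G$ of order $k\le 4$ at $b$ is of the form $\IE[M_\alpha(X_1)\, e^{\langle b,X_1\rangle}]$ for some monomial $M_\alpha$ of degree $k$, and is bounded in absolute value by $\IE[|X_1|^k e^{|b||X_1|}]\le \IE[|X_1|^k e^{t_0|X_1|/2}]$. The elementary inequality $y^k e^{t_0 y/2}\le C_k e^{t_0 y}$ for $y\ge 0$, together with $\IE e^{t_0|X_1|}\le c_0$, bounds this by a constant depending only on $t_0, c_0, k$. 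Partial derivatives of $\log \hat G$ of order up to $4$ can in turn be written as rational expressions in the partial derivatives of $\hat G$ of order up to $4$, with denominators $\hat G^j$, $j\le 4$; the hypothesis $|\hat G(b)|\ge 1/2$ then controls these denominators, yielding the required uniform bound on $(\partial^k \log \hat G)(b)$ for $k\le 4$.

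Combining the two ingredients, each term in the Fa\`a di Bruno expansion is bounded by $C|a|^4/n^2$ with $C$ depending only on $d, t_0, c_0$, and summing over the (finitely many) partitions of $\{1,2,3,4\}$ completes the proof. The only mildly delicate point is the chain-rule bookkeeping, but since only a modulus estimate is needed, the exact combinatorial coefficients are immaterial; every term in the expansion scales identically as $|a|^4/n^2$ up to constants depending only on $d, t_0, c_0$.
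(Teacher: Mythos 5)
Your proof is correct and takes essentially the same route as the paper: expand $\frac{d^4}{du^4}\log\hat G$ via the Fa\`a di Bruno formula for $\log\circ\hat G$, bound the resulting derivatives of $\hat G$ using the exponential moment and $|b|\le t_0/2$, control denominators with $|\hat G|\ge 1/2$, and note that each $u$-derivative contributes one factor of $a/\sqrt n$, giving the overall $|a|^4/n^2$. One minor notational slip: your displayed partition-sum should have $\partial^{|B|}\hat G$ (not $\partial^{|B|}\log\hat G$) inside the product over blocks, with an external factor $(-1)^{|\pi|-1}(|\pi|-1)!/\hat G^{|\pi|}$, exactly as your own prose in the following paragraph correctly describes.
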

\begin{proof}
\begin{equation}
    \begin{aligned}
        \frac{d^{4}}{du^{4}} \log \hat{G}
\biggl(\frac{ua}{\sqrt{n}}\biggr)= \sum^{ }_{}  \mathrm{c}\left(\left\{\beta_{1}, \ldots,
\beta_{j}\right\}\right) \frac{\frac{d^{\beta_{1}}}{du^{\beta_{1}}}
\hat{G}\Bigl(\frac{ua}{\sqrt{n}}\Bigr)\cdots\frac{d^{\beta_{j}}}{du^{\beta_{j}}}
\hat{G}\Bigl(\frac{ua}{\sqrt{n}}\Bigr)}{\hat{G}\Bigl(\frac{ua}{\sqrt{n}}\Bigr)^{j}},
\end{aligned}
    \label{eq:lem5_2}
\end{equation}
where the summation is over all collections of nonnegative integers  $\left\{\beta_1, \ldots, \beta_j\right\}$ satisfying
$$
\beta_{1}+\cdots+\beta_{j}=4, \quad 1 \leqslant j \leqslant 4,
$$
and the constant $\mathrm{c}\left(\left\{\beta_{1}, \ldots, \beta_{j}\right\}\right)$ depends only
on the collection $\left\{\beta_{1}, \ldots, \beta_{j}\right\}$. For any nonnegative integer
$\beta\leq 4$, we have 
\begin{equation}
    \begin{aligned}
        \Bigl\lvert \frac{d^{\beta}}{du^{\beta}}
        \hat{G}\Bigl(\frac{ua}{\sqrt{n}}\Bigr)\Bigr\rvert = \biggl\lvert\frac{1}{\sqrt{n}^{\beta}}\IE \left\{\langle a ,
    X_{1}\rangle^\beta e^{\frac{u}{\sqrt{n}} \langle a, X_1\rangle }\right\}\biggr\rvert\leq
    \frac{1}{\sqrt{n}^{\beta}}\IE \left\{(|a|
    |X_{1}|)^\beta e^{\frac{u|a|}{\sqrt{n}}  |X_1|}\right\}.
\end{aligned}
    \label{eq:lem5_3}
\end{equation}
Therefore, for $|a|\leq t_{0}\sqrt{n}/2$ and $u\in [0,1]$, we have that \cref{eq:lem5_3} can be bounded by $\frac{|a|^\beta}{(\sqrt{n})^\beta}$ multiplied by a constant depending only on $d$, $c_{0}$, and $t_{0}$ in \cref{t1}. Combining
\cref{eq:lem5_2}, \cref{eq:lem5_3}, and the condition $\Bigl\lvert \hat{G}\Bigl(\frac{ua}{\sqrt{n}}\Bigr)\Bigr\rvert\geq
\frac{1}{2}, \forall\ u\in [0,1]$ we complete the proof.
\end{proof}
By Taylor's expansion, we have, $\forall\ u\in [0,1]$,
\begin{equation}
    \begin{aligned}
        \Bigl\lvert \hat{G}\Bigl(\frac{ua}{\sqrt{n}}\Bigr)-1\Bigr\rvert =  \Bigl\lvert \IE
        \int_{0}^{1} \frac{1}{\sqrt{n}}\langle ua, X_{1}\rangle  \exp \Bigl\{\frac{v}{\sqrt{n}}\{\langle ua,
    X_{1}\rangle \}\Bigr\} dv\Bigr\rvert\leq\frac{\lvert a\rvert}{\sqrt{n}}   \IE \lvert
    X_{1} \rvert e^{\frac{|a|}{\sqrt{n}}  |X_1|}.
    \end{aligned}
    \label{eq:lem3_4}
\end{equation}
Therefore, there exists a constant $\varepsilon>0$ such that for $\lvert a\rvert \leq
\varepsilon \sqrt{n}$, \cref{eq:lem3_4} is less than $1/2$ and 
\begin{equation}
    \begin{aligned}
       \Bigl\lvert \hat{G}\Bigl(\frac{ua}{\sqrt{n}}\Bigr)\Bigr\rvert\geq
\frac{1}{2}, \forall\ u\in [0,1].
    \end{aligned}
    \label{eq:lem3_5}
\end{equation}
By \cref{eq:lem3_3}, \cref{eq:lem3_5}, and \cref{lem:5.1}, we have 
\begin{equation}
    \begin{aligned}
        \Bigl\lvert  \log \hat{G}\Bigl(\frac{a}{\sqrt{n}}\Bigr)- \frac{|a|^2}{2n}-
        \frac{\IE \langle a, X_{1}\rangle^{3}}{6n^{3/2}}\Bigr\rvert\leq \frac{C}{n^{2}} \lvert
        a\rvert^{4}.
    \end{aligned}
    \label{eq:lem3_6}
\end{equation}
For the second factor on the right hand side of \cref{eq:lem3_2}, from the elementary inequality 
\begin{equation}
    \begin{aligned}
        |\exp(x)-1|\leq |x| \exp(\lvert x\rvert)
    \end{aligned}
    \label{eq:lem3_7}
\end{equation}
and \cref{eq:lem3_6}, we have 
\begin{equation}
    \begin{aligned}
       &\quad \biggl\lvert\exp \Bigl\{n \Bigl(\log \hat{G}\Bigl(\frac{a}{\sqrt{n}}\Bigr)- \frac{|a|^2}{2n}-
        \frac{\IE \langle a, X_{1}\rangle^{3}}{6n^{3/2}}\Bigr) \Bigr\}-1\biggr\rvert
                    \leq  \frac{C}{n}  \lvert a\rvert^{4}  
            \exp \Bigl\{\frac{C}{n}  \lvert a\rvert^{4}  \Bigr\}.
    \end{aligned}
    \label{eq:lem3_8}
\end{equation}
From \cref{eq:lem3_2,eq:lem3_8}, we have 
\begin{equation}
    \begin{aligned}
        &\quad\biggl\lvert\hat{G}^{n} \Bigl(\frac{a}{\sqrt{n}}\Bigr)- \exp
        \Bigl\{\frac{|a|^2}{2}+\frac{\IE \langle a,
        X_{1}\rangle ^{3}}{6\sqrt{n}} \Bigr\}\biggr\rvert \leq \frac{C}{n}  \lvert a\rvert^{4}  
             \exp \Bigl\{\frac{C}{n}  \lvert a\rvert^{4}  + \frac{|a|^2}{2}+\frac{C|a|^3}{6\sqrt{n}}\Bigr\}.
    \end{aligned}
    \label{eq:lem3_9}
\end{equation}
Next, we give the following bound. By Taylor's expansion
\begin{equation}
    \begin{aligned}
     &\quad  \biggl\lvert \exp \Bigl\{\frac{|a|^2}{2}+\frac{\IE\langle a,
        X_{1}\rangle^{3}}{6\sqrt{n}} \Bigr\}-\exp
        \Bigl\{\frac{|a|^2}{2} \Bigr\}\biggl(1+\frac{\IE\langle a,
        X_{1}\rangle^{3}}{6\sqrt{n}}\biggr)\biggr\rvert\\
        &=  \exp
        \Bigl\{\frac{|a|^2}{2} \Bigr\} \biggl\lvert\exp \Bigl\{\frac{\IE\langle a,
    X_{1}\rangle^{3}}{6\sqrt{n}} \Bigr\}-1-\frac{\IE\langle a,
    X_{1}\rangle^{3}}{6\sqrt{n}}\biggr\rvert \\
      &= \exp
        \Bigl\{\frac{|a|^2}{2} \Bigr\} \biggl\lvert
        \int_{0}^{1} (1-u)\exp \Bigl\{\frac{u\IE \langle a,
                X_{1}\rangle^{3}}{6\sqrt{n}} \Bigr\} \biggl(\frac{\IE \langle a,
    X_{1}\rangle^{3}}{6\sqrt{n}}\biggr)^{2}du \biggr\rvert
   \\ &\leq  \frac{C}{n}  \lvert a\rvert^{6}  \exp
       \Bigl\{\frac{\lvert a\rvert^2}{2} 
           +\frac{C}{\sqrt{n}}  \lvert a\rvert^{3} \Bigr\}.  
    \end{aligned}
    \label{eq:lem3_13}
\end{equation}
Combining \eq{eq:lem3_9} and \eq{eq:lem3_13}, we have for $\lvert a\rvert \leq
\varepsilon\sqrt{n} $,
\begin{equation}
    \begin{aligned}
      &\quad  \biggl\lvert\hat{G}^{n} \Bigl(\frac{a}{\sqrt{n}}\Bigr)- \exp
        \Bigl\{\frac{|a|^2}{2} \Bigr\}\biggl(1+\frac{\IE\langle a,
    X_{1}\rangle^{3}}{6\sqrt{n}}\biggr)\biggr\rvert\\
    &\leq \biggl(\frac{C}{n}  \lvert a\rvert^{4} +\frac{C}{n}  \lvert a\rvert^{6}\biggr) \exp
            \Bigl\{  \frac{\lvert a\rvert^{2}}{2}
                +
            \frac{C}{\sqrt{n}}  \lvert a\rvert^{3} + \frac{C}{n} \lvert
        a\rvert^{4} \Bigr\},
    \end{aligned}
    \label{eq:lem3_15}
\end{equation}
which is the required result.
\end{proof}

\begin{proof}[Proof of \cref{lem:8}]
The case $d=1$ follows from the integration by parts formula and the asymptotic tail probability of the $\chi_1$ distribution.
In the following, we consider the case $d\geq 2$.

First, we have 
\begin{equation}
	\begin{aligned}
	&	\int_{\lvert D y\rvert> x}^{ } \lvert y\rvert^{r} 
                          \exp \biggl\{-\frac{\lvert y\rvert^{2}}{2}+
                          C\frac{x \lvert y\rvert^{2}}{\sqrt{n}}\biggr\}
		dy\\=&
		\int_{y\in \mathbb{R}^{d}} 1_{	\{|Dy|> x\}} \lvert y\rvert^{r} 
                          \exp \biggl\{-\frac{\lvert y\rvert^{2}}{2}+
                          C\frac{x \lvert y\rvert^{2}}{\sqrt{n}}\biggr\}
                                   dy
		\\=& \int_{u\geq 0} \int_{y\in \partial B(0,u)} 1_{	\{|Dy|> x\}} u^{r} \exp \biggl\{-\frac{u^{2}}{2}+
                          C\frac{x u^{2}}{\sqrt{n}}\biggr\}dSdu
			\\=& \int_{u\geq 0}  	S(\{|Dy|> x\}\cap \{y\in \partial B(0,u)\}) u^{r} \exp \biggl\{-\frac{u^{2}}{2}+
                          C\frac{x u^{2}}{\sqrt{n}}\biggr\}du,
	\end{aligned}
	\label{eq:lem8_1}
\end{equation}
where $S(\cdot)$ is the Lebesgue measure of $(d-1)$-dimensional surface. Let 
\begin{equation}
	\begin{aligned}
		\xi(a)= \frac{S(\{|Dy|> x\}\cap \{y\in \partial B(0,ax)\})}{S(\{y\in \partial B(0,ax)\})}, \quad a>0.
	\end{aligned}
	\label{eq:lem8_2}
\end{equation}
We can easily verify that $\xi(a)$ does not depend on $x$ and $\xi(a)$ is a continuous and increasing function such
that
\begin{equation}
	\begin{aligned}
		\xi(a)=0 \mbox{ for $0<a\leq 1$, and } \xi(a)=1 \mbox{ for $a\geq
		\frac{1}{q_{d}^{1/2}}$},
	\end{aligned}
	\label{eq:lem8_3}
\end{equation}
where $q_{d}$ is the smallest eigenvalue of $Q$. 
Let $y_{1}$ be the first component of a vector $y\in
\mathbb{R}^{d}$. There exists an absolute constant $\delta> 0$ (in particular, it does not depend on $Q$) such that 
\begin{equation}
	\begin{aligned}
		\frac{S(\{y_{1}> x\}\cap \{y\in \partial B(0,(1+\delta)x)\})}{S(\{y\in \partial
		B(0,(1+\delta)x)\})}=\frac{1}{16}.
	\end{aligned}
	\label{eq:lem8_4}
\end{equation}
Because $\{|Dy|> x\}\supset \{|y_1|>x\}$ (recall the largest eigenvalue of $Q$ is $1$), we then have 
\begin{equation}
	\begin{aligned}
		\xi(1+\delta)&= \frac{S(\{|Dy|> x\}\cap \{y\in \partial B(0,(1+\delta)x)\})}{S(\{y\in \partial
				B(0,(1+\delta)x)\})}\\&\geq 2\frac{S(\{y_{1}> x\}\cap \{y\in \partial B(0,(1+\delta)x)\})}{S(\{y\in \partial
		B(0,(1+\delta)x)\})}\\&=\frac{1}{8}.
	\end{aligned}
	\label{eq:lem8_5}
\end{equation}
We now return to \cref{eq:lem8_1}. By \cref{eq:lem8_1,eq:lem8_2}, we observe that 
\begin{equation}
	\begin{aligned}
		\int_{\lvert D y\rvert> x}^{ } \lvert y\rvert^{r} 
                          \exp \biggl\{-\frac{\lvert y\rvert^{2}}{2}+
                          C\frac{x \lvert y\rvert^{2}}{\sqrt{n}}\biggr\}
						  dy= \frac{2 \pi^{d/2}}{\Gamma(\frac{d}{2})} \int_{u> x}
						  \xi\biggl(\frac{u}{x}\biggr)	u^{r+d-1} \exp \biggl\{-\frac{u^{2}}{2}+
                          C\frac{x u^{2}}{\sqrt{n}}\biggr\}du,
	\end{aligned}
	\label{eq:lem8_6}
\end{equation}
where we use the fact that the surface area of the $d$-dimensional unit ball is $2
\pi^{d/2}/\Gamma(\frac{d}{2})$.
Next, we deal with the integration on the right-hand side of \cref{eq:lem8_6}. By a change of variable
and the integration by parts formula, we have, choosing $\varepsilon>0$ to be sufficiently small such that $1-\frac{2Cx}{\sqrt{n}}>\frac{1}{2}$,
\begin{equation}
	\begin{aligned}
		&\quad\int_{u>  (1+\delta)x}
						  \xi\biggl(\frac{u}{x}\biggr)	u^{r+d-1} \exp \biggl\{-\frac{u^{2}}{2}+
	C\frac{x u^{2}}{\sqrt{n}}\biggr\}du\\&=
          \int_{a> 1+\delta}
		  \xi(a)	(ax)^{r+d-1} \exp \biggl\{-\frac{(ax)^{2}}{2}+
		  C\frac{x (ax)^{2}}{\sqrt{n}}\biggr\}xda
									   \\&=- \Bigl(1-\frac{2Cx}{\sqrt{n}}\Bigr)^{-1} \xi(a)	(ax)^{r+d-2} \exp \biggl\{-\frac{(ax)^{2}}{2}+
		C\frac{x (ax)^{2}}{\sqrt{n}}\biggr\}\bigg|_{1+\delta}^{\infty}
       \\&\quad+\Bigl(1-\frac{2Cx}{\sqrt{n}}\Bigr)^{-1}\int_{a> 1+\delta}
		  	(ax)^{r+d-2} \exp \biggl\{-\frac{(ax)^{2}}{2}+
		  C\frac{x (ax)^{2}}{\sqrt{n}}\biggr\}d\xi(a)
		  \\&\quad+(r+d-2) \Bigl(1-\frac{2Cx}{\sqrt{n}}\Bigr)^{-1}\int_{a> 1+\delta}
		  \xi(a)	(ax)^{r+d-3} \exp \biggl\{-\frac{(ax)^{2}}{2}+
		  C\frac{x (ax)^{2}}{\sqrt{n}}\biggr\}xda
	   \\&:=J_{1}+J_{2}+J_{3}  .
	\end{aligned}
	\label{eq:lem8_7}
\end{equation}
Recalling that $\xi(a)$ is increasing, $1/8\leq \xi(a)\leq 1$ for $a\geq 1+\delta$, and $1-\frac{2Cx}{\sqrt{n}}>\frac{1}{2}$, we have 
\begin{equation}
	\begin{aligned}
		J_{1}+J_{2}&\leq C \xi(1+\delta)	( (1+\delta) x)^{r+d-2} \exp \biggl\{-\frac{(
			(1+\delta)x)^{2}}{2}+
		C\frac{x ( (1+\delta) x)^{2}}{\sqrt{n}}\biggr\}
\\&\quad+C(r)
( (1+\delta)x)^{r+d-2} \exp \biggl\{-\frac{( (1+\delta)x)^{2}}{2}+
C\frac{x ( (1+\delta) x)^{2}}{\sqrt{n}}\biggr\}\int_{a> 1+\delta}d\xi(a)
\\&\leq C(r) x^{r}
x^{d-2} \exp \biggl\{-\frac{( (1+\delta)x)^{2}}{2}
\biggr\}.
	\end{aligned}
	\label{eq:lem8_8}
\end{equation}
Repeating \cref{eq:lem8_7} with $C=0$ and $r=0$, we have 
\begin{equation}
	\begin{aligned}
		&\quad\int_{u>  (1+\delta)x}
						  \xi\biggl(\frac{u}{x}\biggr)	u^{d-1} \exp \biggl\{-\frac{u^{2}}{2}\biggr\}du
	  \\&=  \xi( 1+\delta)	( (1+\delta)x)^{d-2} \exp \biggl\{-\frac{( (1+\delta)x)^{2}}{2}\biggr\}
		\\&\quad+(d-2)\int_{a> 1+\delta}
		  \xi(a)	(ax)^{d-3} \exp \biggl\{-\frac{(ax)^{2}}{2}\biggr\}xda
       \\&\quad+\int_{a> 1+\delta}
		  	(ax)^{d-2} \exp \biggl\{-\frac{(ax)^{2}}{2}\biggr\}d\xi(a)\\
		 &\geq \frac{1}{8} x^{d-2} \exp \biggl\{-\frac{((1+\delta) x)^{2}}{2}\biggr\},
	\end{aligned}
	\label{eq:lem8_9}
\end{equation}
where we used the fact that the last two integrations in \cref{eq:lem8_9} are positive and $\xi(1+\delta)\geq \frac{1}{8}$ (cf. \eq{eq:lem8_5}).
%$1/8\leq \xi(a)\leq 1$ for  $a\geq 1+\delta$. 
By \cref{eq:lem8_8,eq:lem8_9}, we have
\begin{equation}
	\begin{aligned}
		J_{1}+J_{2}\leq C(r) x^{r} \int_{u>  (1+\delta)x}
						  \xi\biggl(\frac{u}{x}\biggr)	u^{d-1} \exp
						  \biggl\{-\frac{u^{2}}{2}\biggr\}du.
	\end{aligned}
	\label{eq:lem8_10}
\end{equation}
Combining \cref{eq:lem8_7,eq:lem8_10}, we obtain
\begin{equation}
	\begin{aligned}
		&\quad\int_{u>  (1+\delta)x}
						  \xi\biggl(\frac{u}{x}\biggr)	u^{r+d-1} \exp \biggl\{-\frac{u^{2}}{2}+
	C\frac{x u^{2}}{\sqrt{n}}\biggr\}du\\&\leq C(r) x^{r} \int_{u>  (1+\delta)x}
						  \xi\biggl(\frac{u}{x}\biggr)	u^{d-1} \exp
    \biggl\{-\frac{u^{2}}{2}\biggr\}du\\&\quad+(r+d-2)\Bigl(1-\frac{2Cx}{\sqrt{n}}\Bigr)^{-1}\int_{u>
						  (1+\delta)x}
		  	\xi\biggl(\frac{u}{x}\biggr) u^{r+d-3} \exp \biggl\{-\frac{u^{2}}{2}+
			C\frac{x u^{2}}{\sqrt{n}}\biggr\}du.
	\end{aligned}
	\label{eq:lem8_11}
\end{equation}
If $r-2\geq 2-d$, we can apply \cref{eq:lem8_11} to the last integration. Performing
this procedure $p$ times, where $p$ is the smallest integer that is greater than or equal to $\frac{r+d}{2}-1$, we have 
\begin{equation}
	\begin{aligned}
		&\quad\int_{u>  (1+\delta)x}
						  \xi\biggl(\frac{u}{x}\biggr)	u^{r+d-1} \exp \biggl\{-\frac{u^{2}}{2}+
    C\frac{x u^{2}}{\sqrt{n}}\biggr\}du\\&\leq C(r) \biggl( \sum^{p}_{i=0} x^{r-2i} \biggr) \int_{u>  (1+\delta)x}
						  \xi\biggl(\frac{u}{x}\biggr)	u^{d-1} \exp
						  \biggl\{-\frac{u^{2}}{2}\biggr\}du\\
                                         &\quad+ \Bigl(1-\frac{2Cx}{\sqrt{n}}\Bigr)^{-p-1}
                                         \Biggl(\prod_{i=1}^{p+1}(r+d-2i)\Biggr)  \int_{u>
						  (1+\delta)x}
		  	\xi\biggl(\frac{u}{x}\biggr) u^{r+d-3-2p} \exp \biggl\{-\frac{u^{2}}{2}+
			C\frac{x u^{2}}{\sqrt{n}}\biggr\}du.
	\end{aligned}
	\label{eq:lem8_12}
\end{equation}
%Choosing $p$ be the smallest integer which is greater than or equal to $\frac{r+d}{2}-1$,  
Because the last term is $\leq 0$, we then have 
\begin{equation}
	\begin{aligned}
		&\quad\int_{u>  (1+\delta)x}
						  \xi\biggl(\frac{u}{x}\biggr)	u^{r+d-1} \exp \biggl\{-\frac{u^{2}}{2}+
    C\frac{x u^{2}}{\sqrt{n}}\biggr\}du\\&\leq C(r) \biggl( \sum^{p}_{i=0} x^{r-2i} \biggr) \int_{u>  (1+\delta)x}
						  \xi\biggl(\frac{u}{x}\biggr)	u^{d-1} \exp
						  \biggl\{-\frac{u^{2}}{2}\biggr\}du
                                       \\&\leq C(r) x^{r} \int_{u>  (1+\delta)x}
						  \xi\biggl(\frac{u}{x}\biggr)	u^{d-1} \exp
						  \biggl\{-\frac{u^{2}}{2}\biggr\}du,
	\end{aligned}
	\label{eq:lem8_13}
\end{equation}
where the last inequality follows from $x> 1$.
We can easily verify that 
\begin{equation}
    \begin{aligned}
        &\int_{x<u\leq   (1+\delta)x}
						  \xi\biggl(\frac{u}{x}\biggr)	u^{r+d-1} \exp \biggl\{-\frac{u^{2}}{2}+
                          C\frac{x u^{2}}{\sqrt{n}}\biggr\}du
                          \\\leq& C(r) x^{r} \int_{  x<u\leq (1+\delta)x}
						  \xi\biggl(\frac{u}{x}\biggr)	u^{d-1} \exp
						  \biggl\{-\frac{u^{2}}{2}\biggr\}du.
    \end{aligned}
    \label{eq:lem8_14}
\end{equation}
By \cref{eq:lem8_6,eq:lem8_13,eq:lem8_14}, we have
\begin{equation}
    \begin{aligned}
       \int_{\lvert D y\rvert> x}^{ } \lvert y\rvert^{r} 
                          \exp \biggl\{-\frac{\lvert y\rvert^{2}}{2}+
                          C\frac{x \lvert y\rvert^{2}}{\sqrt{n}}\biggr\}
                          dy&\leq C(r) x^{r} \int_{  u> x}
						  \xi\biggl(\frac{u}{x}\biggr)	u^{d-1} \exp
						  \biggl\{-\frac{u^{2}}{2}\biggr\}du\\
                          &=C(r) x^{r}  \int_{\lvert D y\rvert> x}^{ } 
                          \exp \biggl\{-\frac{\lvert y\rvert^{2}}{2}\biggr\}
                          dy\\
                          &= C(r) x^{r} \P(|DZ|> x).
    \end{aligned}
    \label{eq:lem8_15}
\end{equation}
This proves \cref{lem:8}.

\end{proof}

\begin{proof}[Proof of \cref{lem:7}]
In this proof, we denote by $C_d$ positive constants that depend only on $d$. They may differ in different expressions.
All of the chi-square random variables below are assumed to be independent.
Because $x> 1$, we can verify that
$\lambda_{p-1}\geq 1/2$. 
Because $\lambda_{i}$ decreases with respect to $i$, we have 
\begin{equation}
	\begin{aligned}
		\P\biggl( \sum_{i=1}^{s} \lambda_{i} \chi^{2}_{v_{i}}\geq x^{2}\biggr)\geq\P\biggl(
		\lambda_{p-1} \chi^2_{ \sum^{p-1}_{j=1} v_{j}}+\sum_{i=p}^{s} \lambda_{i} \chi^{2}_{v_{i}}\geq
	x^{2}\biggr).
	\end{aligned}
	\label{eq:lem7_2}
\end{equation}
For any positive integer $v\leq d$, from chi-square tail probabilities, there exists a positive constant $C_{d}$ depending only on
$d$ such that 
\begin{equation}
	\begin{aligned}
		\P\left( \chi^{2}_{v}\geq a^{2} \right)\geq C_{d} a^{v-2} e^{-\frac{a^{2}}{2}}
		\mbox{for all $a^{2}\geq\frac{1}{2^{d+1}}$}.
	\end{aligned}
	\label{eq:lem7_3}
\end{equation}
From the definition  of $r$ and \cref{eq:lem7_3}, we have 
\begin{equation}
	\begin{aligned}
	&\P(\lambda_{p-1} \chi^2_{\sum_{j=1}^{p-1} v_j}\geq a^2)	=\P\left( \lambda_{p-1} \chi^{2}_{ r}\geq a^{2} \right)\\
	 \geq&		C_{d} \left(\frac{a}{\lambda^{1/2}_{p-1}}\right)^{r-2} e^{-\frac{a^{2}}{2\lambda_{p-1}}}\geq
		C_{d}  a^{r-2} e^{-\frac{a^{2}}{2\lambda_{p-1}}}\mbox{ for
		all $a^{2}\geq \frac{1}{2^{s-p+2}}$},
	\end{aligned}
	\label{eq:lem7_4}
\end{equation}
where in the last inequality we used the fact that $1/2\leq \lambda_{p-1}\leq 1$.

If $p=s+1$, then \cref{eq:lem7_1} follows from \cref{eq:lem7_2}, \cref{eq:lem7_4} with $a=x$, and $\bigl(\frac{1}{\lambda_{p-1}}-1\bigr)x^2\leq 1$.

Suppose now that $p\leq s$. We let $Y=\lambda_{p-1} \chi^{2}_{ r}$ and for any positive integer $v$, let $f_{v}(\cdot)$ be the density of
$\chi^{2}_{v}$. Then, for $x^{2}/2^{s-p+1}\leq a^{2}\leq x^2$, we have
\begin{equation}
	\begin{aligned}
		\P\left( Y+\lambda_{p} \chi_{v_{p}}^{2} \geq a^{2} \right)\geq
		\frac{1}{\lambda_{p}} \int_{0}^{a^{2}/2} f_{v_{p}}\biggl(\frac{y}{\lambda_{p}}\biggr)
		\P\left( Y\geq a^2 - y\right) dy.
	\end{aligned}
	\label{eq:lem7_5}
\end{equation}
In the above integration, $y\in [0,a^{2}/2]$, and thus $a^{2}\geq a^{2}-y\geq a^{2}/2$. Furthermore,
because $x^{2}/2^{s-p+1}\leq a^{2}\leq x^2$, we have 
\begin{equation}
	\begin{aligned}
		\frac{1}{2^{s-p+2}}<\frac{x^2}{2^{s-p+2}}\leq a^{2}-y\leq x^{2},
	\end{aligned}
	\label{eq:lem7_6}
\end{equation}
and we can apply \cref{eq:lem7_4} to $\P(Y\geq a^2-y)$.
Plugging \cref{eq:lem7_4} into \cref{eq:lem7_5} yields 
\begin{equation}
	\begin{aligned}
		\P\left(Y+ \lambda_{p} \chi_{v_{p}}^{2} \geq a^{2} \right)&\geq 
		\frac{C_{d}}{\lambda_{p}} \int_{0}^{a^{2}/2}
		\biggl(\frac{y}{\lambda_{p}}\biggr)^{\frac{v_{p}}{2}-1} e^{-\frac{y}{2
		\lambda_{p}}} (a^{2}-y)^{\frac{r}{2}-1} e^{-\frac{a^{2}-y}{2 \lambda_{p-1}}}dy
																\\&\geq
																C_{d}\lambda_{p}^{-\frac{v_{p}}{2}} \int_{0}^{a^{2}/2}
		y^{\frac{v_{p}}{2}-1} e^{-\frac{y}{2
			}(\frac{1}{\lambda_{p}}-\frac{1}{\lambda_{p-1}})}  dy
			a^{r-2}e^{-\frac{a^{2}}{2 \lambda_{p-1}}}
		\\&\geq C_{d}\lambda_{p}^{-\frac{v_{p}}{2}} \int_{0}^{a^{2}/2}
		y^{\frac{v_{p}}{2}-1} e^{-\frac{y}{2
			}(\frac{1}{\lambda_{p}}-1)}  dy
			a^{r-2}e^{-\frac{a^{2}}{2 }} e^{-\frac{a^{2}}{2}(\frac{1}{\lambda_{p-1}}-1)}.
	\end{aligned}
	\label{eq:lem7_7}
\end{equation}
By a change of variable, $\frac{1}{\lambda_{p-1}}-1\leq \frac{1}{x^2}\leq \frac{1}{a^2}$ and
$\frac{1}{\lambda_p}-1>\frac{1}{x^2}\geq \frac{1}{a^2 2^{s-p+1}}\geq \frac{1}{a^{2} 2^{d}}$, we have \cref{eq:lem7_7} is greater than or
equal to
\begin{equation}
	\begin{aligned}
	&	C_{d}\lambda_{p}^{-\frac{v_{p}}{2}} \left(\frac{1}{\lambda_{p}}-1
		\right)^{-\frac{v_{p}}{2}} \int_{0}^{a^{2}(\frac{1}{\lambda_{p}}-1)/2}
		y^{\frac{v_{p}}{2}-1} e^{-\frac{y}{2
			}}  dy \,
			a^{r-2}e^{-\frac{a^{2}}{2 }} 
			\\&\geq C_{d}\lambda_{p}^{-\frac{v_{p}}{2}} \left(\frac{1}{\lambda_{p}}-1
		\right)^{-\frac{v_{p}}{2}} \int_{0}^{1/2^{d+1}}
		y^{\frac{v_{p}}{2}-1} e^{-\frac{y}{2
			}}  dy\,
			a^{r-2}e^{-\frac{a^{2}}{2 }}\\
			  &\geq C_{d} \left(1-\lambda_{p} \right)^{-\frac{v_{p}}{2}} a^{r-2}e^{-\frac{a^{2}}{2
			  }}.
	\end{aligned}
	\label{eq:lem7_8}
\end{equation}
Repeating procedures \cref{eq:lem7_4,eq:lem7_5,eq:lem7_6,eq:lem7_7,eq:lem7_8} $s-p+1$ times for the right-hand side of \cref{eq:lem7_2}, we have,
for $a^{2}\in [x^{2}/2,x^2]$,
\begin{equation}
	\begin{aligned}
		\P\biggl( \sum^{s}_{i=1} \lambda_{i} \chi_{v_{i}}^{2}\geq a^{2}\biggr)\geq C_{d} \biggl[\prod_{i=p}^{s}
		(1-\lambda_{i})^{-\frac{v_{i}}{2}}\biggr] a^{r-2} e^{-\frac{a^{2}}{2}}.
	\end{aligned}
	\label{eq:lem7_9}
\end{equation}
Taking $a=x$ yields the desired result.

\end{proof}

\appendix

\section{Appendix}\label{s6}

\begin{proof}[Proof of \eq{f3}]
The result for bounded $x$ follows immediately from \eq{f1} and \eq{f2}. In the following, we assume $x> 1$.
We use $\delta, \epsilon$, and $\varepsilon$ to denote unspecified positive constants, which do not depend on $n$ and $x$.
By \cite[Theorem 3]{VonBahr1967}, we have, for some positive constant $\delta>0$ and $1<x\leq \delta \sqrt{n}$,
\besn{\label{1}
\P(|W|>x)=&(2\pi)^{-d/2} \int_{u\in \Omega_0} \exp\left( n\sum_{v=3}^\infty \left(\frac{x}{\sqrt{n}}\right)^v Q_v(u) \right) dS\\
&\times \int_x^\infty e^{-y^2/2} y^{d-1} dy \left(1+O\left(\frac{x}{\sqrt{n}}\right)\right),
}
where $dS$ is the surface measure of $\Omega_0=\{u\in \mathbb{R}^d: |u|=1\}$
and for each $v\geq 3$, $Q_v: \mathbb{R}^d\to \mathbb{R}$ is a homogeneous polynomial of degree $v$ whose coefficients depend on the mixed cumulants up to order $v$ of $X_1$.
For example, $Q_3(u)=\frac{1}{6}\sum_{j,k,l=1}^d \E[X_{1j}X_{1k}X_{1l}] u_j u_k u_l$, where $j,k,l$ are the indices of vector components.
Moreover, $\sum_{v=3}^\infty Q_v(u)$ is convergent for $|u|\leq \epsilon$, where $\epsilon$ is a positive constant.
%Note that if $\exp(\dots)$ in \eq{1} is replaced by 1, then the right-hand side equals $P(|Z|>x)(1+O(\frac{x}{\sqrt{n}}))$.

In the remainder of the proof, assume that $x\leq \epsilon n^{1/4}$, which can be achieved by choosing the positive constant $\varepsilon$ in the range of $1< x\leq \varepsilon n^{1/6}$ to be sufficiently small.
Because $Q_v$ is a polynomial of degree $v$, we have
\bes{
&n\sum_{v=8}^\infty \left(\frac{x}{\sqrt{n}}\right)^v Q_v(u)=n\sum_{v=8}^\infty Q_v\left(u\frac{x}{\sqrt{n}}\right)\\
=&n\sum_{v=8}^\infty Q_v\left(u\frac{x}{n^{1/4}}\right) \left(\frac{1}{n^{1/4}}\right)^v=\sum_{v=8}^\infty n^{1-v/4} Q_v\left(u\frac{x}{n^{1/4}}\right)=O\left(\frac{1}{n}\right),
}
where in the last step, we used the Dirichlet condition for the convergence of series and the fact that $\sum_{v=3}^\infty Q_v(u)$ is convergent for $|u|\leq \epsilon$.
Therefore,
\besn{\label{2}
&\exp\left( n\sum_{v=3}^\infty \left(\frac{x}{\sqrt{n}}\right)^v Q_v(u) \right)\\
=&\exp\left( n\sum_{v=3}^7 \left(\frac{x}{\sqrt{n}}\right)^v Q_v(u) +n\sum_{v=8}^\infty \left(\frac{x}{\sqrt{n}}\right)^v Q_v(u)\right)\\
=&\exp\left( n \left(\frac{x}{\sqrt{n}}\right)^3 Q_3(u) \right)\left(1+O\left(\frac{x^4}{n}\right)\right).
}
From \eq{1} and \eq{2}, we have, for $1< x\leq \min\{\delta \sqrt{n}, \epsilon n^{1/4}\}$,
\bes{
\P(|W|>x)=&(2\pi)^{-d/2} \int_{u\in \Omega_0} \exp\left( n \left(\frac{x}{\sqrt{n}}\right)^3 Q_3(u) \right) dS\\
&\times \int_x^\infty e^{-y^2/2} y^{d-1} dy \left(1+O\left(\frac{x}{\sqrt{n}}+\frac{x^4}{n}\right)\right).
}
Therefore, for $1< x\leq \varepsilon n^{1/6}$ for a sufficiently small $\varepsilon>0$,
\bes{
\P(|W|>x)=&(2\pi)^{-d/2} \int_{u\in \Omega_0} \left( 1+ \frac{x^3}{\sqrt{n}} Q_3(u) +O\left(\frac{x^6}{n}\right)  \right) dS\\
&\times \int_x^\infty e^{-y^2/2} y^{d-1} dy \left(1+O\left(\frac{x}{\sqrt{n}}+\frac{x^4}{n}\right)\right).
}
By symmetry, because $Q_3$ is a polynomial of degree 3,
\be{
\int_{u\in \Omega_0}\frac{x^3}{\sqrt{n}} Q_3(u)   dS=0.
}
This result, together with the fact that
\ben{\label{5}
(2\pi)^{-d/2}\int_{u\in \Omega_0} dS\int_x^\infty e^{-y^2/2} y^{d-1} dy=\P(|Z|>x),
}
proves \eq{f3}.
\end{proof}

\begin{proof}[Proof of \eq{f23}]
From \eq{1} and \eq{2}, for $1< x_n\leq \min\{\delta \sqrt{n}, \epsilon n^{1/4}\}$ for a sufficiently small constant $\delta>0$, we have
\besn{\label{3}
\P(|W|>x_n)=&(2\pi)^{-d/2} \int_{u\in \Omega_0} \exp\left( n \left(\frac{x_n}{\sqrt{n}}\right)^3 Q_3(u) \right)\left(1+O\left(\frac{x_n^4}{n}\right)\right) dS\\
&\times \int_{x_n}^\infty e^{-y^2/2} y^{d-1} dy \left(1+O\left(\frac{x_n}{\sqrt{n}}\right)\right).
}
For $x_n=cn^{1/6}\ll n^{1/4}$, from \eq{3}, we have
\ben{\label{6}
\P(|W|>x_n)=(2\pi)^{-d/2} \int_{u\in \Omega_0} \exp\left( n \left(\frac{x_n}{\sqrt{n}}\right)^3 Q_3(u) \right) dS \int_{x_n}^\infty e^{-y^2/2} y^{d-1} dy (1+o(1)).
}
Recall $Q_3(u)=\frac{1}{6}\sum_{j,k,l=1}^d \E[X_{1j}X_{1k}X_{1l}] u_j u_k u_l$.
If the mixed third cumulants of $X_1$ are not all zero, then $Q_3(u)$ is a non-zero function. Moreover, $Q_3(u)=-Q_3(-u)$, and thus $\int_{u\in \Omega_0}Q_3(u)dS=0$.
This implies
\be{
\int_{u\in \Omega_0} \exp\left( n \left(\frac{x_n}{\sqrt{n}}\right)^3 Q_3(u) \right) dS=\int_{u\in \Omega_0} \exp\left( c^3 Q_3(u) \right) dS>\int_{u\in \Omega_0}  dS,
}
which, together with \eq{6} and \eq{5}, proves \eq{f23}.

\end{proof}

\section*{Acknowledgments}

Fang X. was partially supported by Hong Kong RGC 
ECS 24301617,  
GRF 14302418 and 14305821, 
a CUHK direct grant, and a CUHK start-up grant. 
Shao Q.M. was partially supported by National Nature Science Foundation of China NSFC 12031005 and
Shenzhen Outstanding Talents Training Fund.

% \section{reference}
% \begin{thebibliography}{22}
% \providecommand{\natexlab}[1]{#1}
% \providecommand{\url}[1]{\texttt{#1}}
% \expandafter\ifx\csname urlstyle\endcsname\relax
%   \providecommand{\doi}[1]{doi: #1}\else
%   \providecommand{\doi}{doi: \begingroup \urlstyle{rm}\Url}\fi

% \bibitem[Bentkus(2005)]{Be05}
% V. Bentkus (2005).
% A Lyapunov type bound in ${\bf R}\sp d$.
% \textit{Theory Probab. Appl.} \textbf{49}, 311-323.

% \blue{add references}

% \end{thebibliography}
%\bibliographystyle{plainnat} % Style BST file (imsart-number.bst or imsart-nameyear.bst)
%\bibliography{reference}       % Bibliography file (usually '*.bib')

\end{document}